\documentclass[12pt]{amsart}
\usepackage[dvipsnames]{xcolor}
\usepackage{tikz}
\usepackage{tikz-3dplot}
\usetikzlibrary{decorations.pathreplacing,calligraphy}
\usepackage{graphicx}
\usepackage{amssymb, amsthm, amsmath}
\usepackage{etoolbox}
\apptocmd{\sloppy}{\hbadness 10000\relax}{}{}
\apptocmd{\sloppy}{\vbadness 10000\relax}{}{}
\usepackage[justification=justified]{caption}
\usepackage{hyperref}
\usepackage[letterpaper,margin=1.1in]{geometry}

\usepackage{booktabs}       
\usepackage{array}          
\usepackage{colortbl}       

\definecolor{rowgray}{gray}{0.93}
\definecolor{sepcol}{gray}{0.6}
\definecolor{figorange}{RGB}{235,175,127}
\definecolor{figcyan}{RGB}{140,251,253}
\definecolor{figgrid}{RGB}{150,130,115}
\definecolor{figbord}{RGB}{200,200,200}

\definecolor{PTblue}{HTML}{0077BB}
\definecolor{PTteal}{HTML}{009988}
\definecolor{PTmagenta}{HTML}{EE3377}

\newtheorem{oldtheorem}{Theorem}

\newtheorem{theorem}{Theorem}[section]
\newtheorem{proposition}[theorem]{Proposition}
\newtheorem{lemma}[theorem]{Lemma}
\newtheorem{corollary}[theorem]{Corollary}

\theoremstyle{definition}
\newtheorem{definition}[theorem]{Definition}
\newtheorem{example}[theorem]{Example}
\newtheorem{principle}[theorem]{Principle}
\newtheorem{challenge}{Challenge}

\theoremstyle{remark}

\let\inf\relax \DeclareMathOperator*\inf{\vphantom{p}inf}

\newcommand{\side}{\mathop\mathrm{side}\nolimits}
\newcommand{\vol}{\mathop\mathrm{vol}\nolimits}

\newcommand{\Haus}{\mathcal{H}}
\newcommand{\Pack}{\mathcal{P}}
\newcommand{\Net}[1]{\mathcal{M}^{\Delta,#1}}
\newcommand{\Cube}{\mathcal{Q}}

\newcommand{\RR}{\mathbb{R}}

\numberwithin{equation}{section}
\numberwithin{figure}{section}
\numberwithin{table}{section}

\def\res{\hbox{ {\vrule height .22cm}{\leaders\hrule\hskip.2cm} } }

\newcommand{\interior}{\mathop\mathrm{int}\nolimits}
\newcommand{\diam}{\mathop\mathrm{diam}\nolimits}
\newcommand{\Child}{\mathop\mathsf{Child}\nolimits}
\newcommand{\gap}{\mathop\mathrm{gap}\nolimits}
\newcommand{\dist}{\mathop\mathrm{dist}\nolimits}
\newcommand{\ex}{\mathop\mathrm{ex}\nolimits}

\newcommand{\Site}{\mathop\mathsf{Site}\nolimits}

\title[Dimension of harmonic measure]{One-ring Bourgain alternative and a 5-decimal bound on the dimension of harmonic measure}
\date{October 6, 2026}
\author{Michael Albert}
\author{Matthew Badger}
\author{Alyssa Genschaw}
\thanks{M.~Albert and M.~Badger were partially supported by NSF DMS grant 2154047.}
\subjclass[2020]{Primary 31B15, Secondary 28A75, 31B25.}
\keywords{harmonic measure, Bourgain's constant, Hausdorff content, Hausdorff dimension, dimension drop, Bourgain's alternative}
\address{Analysis and Partial Differential Equations Unit,
         Okinawa Institute of Science and Technology,
         1919-1 Tancha, Onna-son, Okinawa 904-0495, Japan}
\email{michael.albert@oist.jp}
\address{Department of Mathematics\\ University of Connecticut\\ Storrs, CT 06269-1009, USA}
\email{matthew.badger@uconn.edu}
\address{Mathematics Department\\ Milwaukee School of Engineering\\ Milwaukee, WI 53202, USA}
\email{genschaw@msoe.edu}
\begin{document}

\begin{abstract}Improving theorems of Bourgain and Badger--Genschaw, we prove that the Hausdorff dimension of harmonic measure on any domain in $\RR^3$, $\RR^4$, and $\RR^n$ for $n\geq3$ is at most $3-10^{-5}$, $4-10^{-7}$, and $n-(n-1)^{-(2n+6.12\sqrt{n})}$, respectively. The previous bounds in dimensions $3$ and $4$ were $3-10^{-15}$ and $4-10^{-26}$, while Bishop conjectures that the sharp bound is $n-(n-1)^{-1}$. The main difficulty in optimizing Bourgain's method is estimating non-additive $s$-dimensional Hausdorff-type contents when $s<n$ is close to $n$. Previous arguments used net contents associated to a fixed $m$-adic lattice, with $m$ large depending on $n$. We redevelop the dimension theory using pairwise disjoint cubes in general position and obtain a new estimate showing that one thin ring without large holes surrounding a cube makes it unlikely for Brownian motion to reach the cube before exiting the domain. A key feature of the construction is that the holes are round rather than rectangular. We also propose ten challenges on the dimension of harmonic measure that may provide accessible targets en route to Bishop's conjecture.

\end{abstract}

\maketitle

\setcounter{tocdepth}{1}
\tableofcontents

\section{Introduction}

\subsection{Background}

One way to understand harmonic measure is as the exit distribution of Brownian motion from a domain. Let $\Omega\subset\RR^n$ be a bounded, connected, open set and let $x\in\Omega$. Standard Brownian motion $B:[0,\infty)\rightarrow\RR^n$ started at $B(0)=x$ eventually exits $\Omega$ at a random point $B(t_{\rm exit})\in\partial\Omega$. The probability that the exit point belongs to a set $E\subset\partial\Omega$ is called the \emph{harmonic measure} of $E$ in $\Omega$ with \emph{pole} at $x$ and is denoted by $\omega^x_\Omega(E)$. Thus, with $\Omega$ and $x$ fixed, $E\mapsto\omega^x_\Omega(E)$ is a Borel probability measure on $\partial\Omega$. Kakutani \cite{kakutani} proved the complementary analytic fact that, with $\Omega$ and $E$ fixed, the function $x\mapsto\omega^x_\Omega(E)$ is harmonic in $\Omega$, and either $\omega^x_\Omega(E)=0$ for every $x\in\Omega$ or $\sup_{x\in\Omega}\omega^x_\Omega(E)=1$. In particular, the null sets of $\omega^x_\Omega$ do not depend on the pole, and we commonly write that a property holds $\omega_\Omega$-a.e.~or $\omega$-a.e.~without specifying $x$.

Harmonic measure also represents solutions of the Dirichlet problem. For every real-valued $f\in C(\partial\Omega)$, the function
\begin{equation}\label{e:harmonic-extension}
u(x):=\int_{\partial\Omega}f(y)\,d\omega^x_\Omega(y),\quad\text{for all }x\in\Omega,
\end{equation}
is harmonic in $\Omega$ and satisfies
\begin{equation}\label{e:boundary-behavior}
\lim_{x\rightarrow y}u(x)=f(y)\quad\text{at $\omega_\Omega$-a.e.~$y\in\partial\Omega$}.
\end{equation}
More precisely, \eqref{e:boundary-behavior} holds outside a set of logarithmic capacity zero when $n=2$ and Newtonian capacity zero when $n\geq3$, hence outside a set of Hausdorff dimension at most $n-2$. Equations \eqref{e:harmonic-extension} and \eqref{e:boundary-behavior}, together with the probabilistic interpretation above, provide most of the background on harmonic measure needed in this paper. For more extensive introductions, see Bishop's review \cite{bishop-review} of the book of Garnett and Marshall \cite{GM}, as well as Kenig \cite{kenig-cbms}, Heinonen \emph{et al.} \cite{HKM-v2}, Helms \cite{helms}, M{\"o}rters and Peres \cite{brownian}, and the forthcoming book of Prats and Tolsa \cite{prats-tolsa}.

\begin{figure}[t]
\noindent
\begin{minipage}{.3\textwidth}%
\vspace{0pt}%
\includegraphics[width=\linewidth]{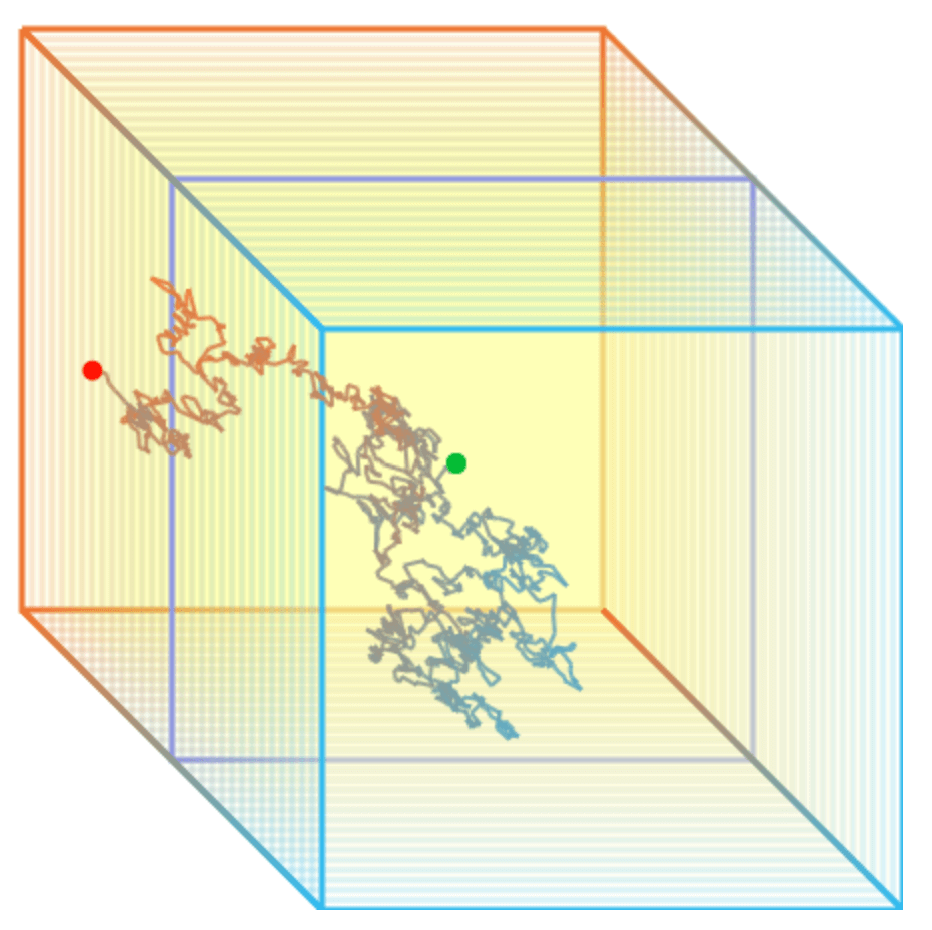}
\end{minipage}%
\begin{minipage}{.7\textwidth}%
\vspace{0pt}%
\caption{Brownian motion simulated in a cube, starting at the green dot and exiting at the red dot. With its pole at the center, each face of the cube has harmonic measure $1/6$, while each edge has harmonic measure $0$.}
\label{fig:bm-demo}
\end{minipage}
\end{figure}

We are interested in the geometric size of harmonic measure. Recall that the \emph{upper Hausdorff dimension} of a measure is the least Hausdorff dimension of a set of full measure, while the \emph{lower Hausdorff dimension} is the infimum of the Hausdorff dimensions of sets of positive measure. The Hausdorff dimensions of $\omega_\Omega=\omega_\Omega^x$ depend on the domain but not on the choice of pole. On any bounded domain in $\RR^n$, the lower Hausdorff dimension of harmonic measure is at least $n-2$, see e.g.~\cite[Appendix A]{Badger-thesis},  \cite{wu-singularity}. On a smooth or Lipschitz domain,  harmonic measure is mutually absolutely continuous with respect to surface measure on $\partial\Omega$ and thus has Hausdorff dimension equal to $n-1$ \cite{Dahlberg}; see Figure \ref{fig:bm-demo}. By contrast, on general domains, whose boundaries can be rough and have Hausdorff dimension greater than $n-1$, the dimension of harmonic measure may be either larger or smaller than $n-1$, yet it can never reach the ambient dimension $n$. The modern higher-dimensional theory begins with the following two results.

\begin{oldtheorem}[{Wolff \cite{Wolff}\footnote{Although published in 1995, preliminary versions of Wolff's paper circulated in the 1980s.}}]\label{t:wolff}
For all $n\geq3$, there exist simply connected NTA\footnote{The non-tangentially accessible domains of Jerison and Kenig are quantitatively open and quantitatively connected; they include Lipschitz domains and many fractal domains \cite{jerisonandkenig}, \cite{kenigtoroduke}.} domains $\Omega\subset\RR^n$ such that the lower and upper Hausdorff dimensions of harmonic measure agree and are $>n-1$. There also exist simply connected NTA domains $\Omega\subset\RR^n$ such that the lower and upper Hausdorff dimensions of harmonic measure agree and are $<n-1$.
\end{oldtheorem}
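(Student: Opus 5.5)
Following Wolff, the plan is to build the domains as ``snowflake'' domains whose boundary is a self-similar perturbation of the hyperplane $\{x_n=0\}$. One first analyzes a periodic model problem, then iterates it through scales. The core object is a domain $D$ lying above a Lipschitz graph over $\RR^{n-1}$ that is $\mathbb{Z}^{n-1}$-periodic and agrees with $\{x_n=0\}$ outside a compact set of each period cell. Such a graph is a bump or a dent of small height placed in each unit cube. For $D$ we take harmonic measure with pole at $\infty$, normalized per period cell, and denote it $\omega$. The first step is to compare the entropy-type quantity
\[
I(D)=\int_{\partial D\cap\text{cell}}\log\frac{d\omega}{d\sigma}\,d\omega
\]
with the corresponding quantity for the flat domain, where $\sigma$ is surface measure. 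Because the domain is periodic, $d\omega/d\sigma$ is the normal derivative of a periodic Green function. One can then expand $I(D)$ to second order in the height parameter $\epsilon$ and show that the sign of the $\epsilon^2$ coefficient can be made either positive or negative by choosing the shape of the perturbation. For instance, dents concentrated at scale much smaller than the period should give one sign, and bumps should give the other. This sign computation is the heart of the construction and is where Wolff's ``flux'' (Rellich-type identity) argument enters. I expect it to be the main obstacle: the first-order term vanishes by symmetry, and the second-order term must be computed by a variational or Fourier expansion of the harmonic function above the graph.

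Next comes iteration. Given one periodic model, one builds $\Omega$ by replacing each flat face of the boundary at generation $k$ with rescaled copies of the perturbed period cell at scale $N^{-k}$, where $N$ is large. The perturbations are separated in scale enough that each generation sees the previous boundary as essentially flat at its own scale. Locality estimates for harmonic measure then make the distribution of $\log(d\omega/d\sigma)$ increments across generations approximately i.i.d. Here NTA-type boundary Harnack comparisons are used, with the NTA property guaranteed because each perturbation is a small Lipschitz graph. Simple connectivity holds because each step only deforms the graph.

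Finally, apply a law of large numbers or ergodic argument to the increments to get, for $\omega$-a.e. point, the local dimension $\lim \log\omega(B(x,r))/\log r = n-1+c\,\delta$ with $\delta\neq0$. Here $c\,\delta$ is the normalized difference between $I(D)$ and the flat value. This gives equal lower and upper dimensions, with the sign chosen in the first step producing dimension $>n-1$ or $<n-1$. The error terms from non-exact independence across scales must be summable, which determines how large $N$ and how small $\epsilon$ must be relative to the second-order gain.
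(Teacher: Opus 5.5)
The paper does not prove this statement; it is quoted from Wolff as background, so there is no internal argument to compare with. Judged on its own, your outline has the right architecture: a periodic model with pole at infinity, the functional $I(D)$, self-similar iteration, and a law of large numbers for exact dimensionality. However, the step you call the heart of the construction fails as stated.

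Take a smooth periodic profile $\phi$, let $D=\{x_n>\epsilon\phi(x')\}$, and let $u$ be the Green function with pole at infinity. Green's identity and periodicity give the exact formula
\[
I(D)=\int_{\partial D\cap\text{cell}}|\nabla u|\log|\nabla u|\,d\sigma=\int_{D\cap\text{cell}}u\,\Delta\log|\nabla u|\,dx,
\qquad
\Delta\log|\nabla u|=\frac{|\nabla^2u|^2}{|\nabla u|^2}-\frac{2\,|\nabla^2u\,\nabla u|^2}{|\nabla u|^4}.
\]
Write $u=x_n+\epsilon u_1+O(\epsilon^2)$, where $u_1$ is the Poisson extension of $-\phi$. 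Then the $\epsilon^2$ coefficient of $I(D)$ is $\int_{\text{cell}\times(0,\infty)}x_n\bigl(|\nabla^2u_1|^2-2|\nabla\partial_nu_1|^2\bigr)\,dx$. Integrating over the period cell decouples the Fourier modes, and each mode $f=e^{i\xi\cdot x'-|\xi|x_n}$ satisfies $|\nabla^2f|^2=4|\xi|^4|f|^2=2|\nabla\partial_nf|^2$.

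So the quadratic coefficient vanishes for every profile and in every dimension, and no choice of shape makes it positive or negative. Independently of this computation, an $\epsilon^2$ coefficient is even in $\phi$, so it could never separate bumps $\phi$ from dents $-\phi$ as you suggest. The sign must therefore come from higher order or from a non-perturbative construction. One candidate is the cubic term, which is odd in $\phi$, so $\pm\phi$ give both signs once it is shown to be nonzero for some $\phi$. Note that it vanishes identically when $n=2$, where $\log|\nabla u|$ is harmonic. Establishing this for $n\geq3$ is exactly the missing content. It also means the gain is only $O(\epsilon^3)$, and this is what $N$ and the cross-scale errors must be tuned against.

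The iteration also needs repair. If the limit domain were a Lipschitz graph domain, Dahlberg's theorem (recalled in the paper's introduction) would force $\dim\omega=n-1$. For the increments to be approximately identically distributed, each copy must be pasted in the frame of the face it replaces. Then the slopes accumulate, and the limit boundary is a snowflake rather than a graph. On that boundary $\log(d\omega/d\sigma)$ is not available, so the increments to track are ratios $\omega(Q')/\omega(Q)$ for nested pieces of consecutive generations.

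Consequently, ``each perturbation is a small Lipschitz graph'' yields neither NTA nor simple connectivity of the limit, nor uniform boundary Harnack constants across generations. What does work is to show that the boundary is Reifenberg flat with small constant at all scales. This gives NTA by Kenig--Toro. Reifenberg's topological disc theorem then shows the boundary is a topological plane bounding a topological half-space, which gives simple connectivity. Finally, pasting onto flat faces presupposes a polyhedral model, or explicit control of curvature errors. You must reconcile this with the smooth profile used in the sign computation.
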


\begin{oldtheorem}[{Bourgain \cite{Bourgain}}]\label{t:bourgain}
For all $n\geq3$, there exists $b_n>0$ such that the upper Hausdorff dimension of harmonic measure on any domain in $\RR^n$ is at most $n-b_n$.
\end{oldtheorem}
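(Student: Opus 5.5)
We follow Bourgain's original strategy, which has three parts: a density-or-hole dichotomy at each scale, a Brownian-motion penalty when no hole is present, and a covering argument that converts these into a dimension bound. Fix a bounded domain $\Omega\subset\RR^n$, a pole $x$, and a dyadic (or $m$-adic, with $m$ large depending on $n$) lattice of cubes meeting $\partial\Omega$.

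\emph{Step 1 (the alternative).} For a cube $Q$ of side $\ell$, look at its $m^n$ children. Either some child-scale region near $Q$ is free of $\partial\Omega$ (a "hole"), or $\partial\Omega$ is spread through all the children. In the second case, $\partial\Omega$ is so thick that its Newtonian capacity at scale $\ell$ is comparable to that of a cube. By the probabilistic interpretation of harmonic measure and Kakutani's theorem, a Brownian motion started anywhere in a fixed neighborhood of $Q$ then hits $\partial\Omega$ before travelling distance $C\ell$ with probability at least $c_n>0$.

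\emph{Step 2 (iterated decay).} Chain these estimates through nested cubes $Q_0\supset Q_1\supset\dots\supset Q_k$. Using the strong Markov property, the harmonic measure of $Q_k$ is at most $(1-c_n)^{j}$, where $j$ is the number of "dense" generations along the chain. In the opposite case, every generation containing a hole removes a definite fraction of the children from consideration. So a generation either contributes a multiplicative loss in $\omega$, or it reduces the number of children that carry boundary.

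\emph{Step 3 (covering).} Choose $s=n-b_n$ with $b_n$ small, depending on $c_n$ and $m$. Show that for $\omega$-almost every point there are arbitrarily fine covers $\{Q_i\}$ of a set of nearly full measure with $\sum \ell(Q_i)^s$ small. This follows from a Frostman/mass-distribution-type argument: the counting of descendants weighted by $\omega$ beats $m^{s}$ per generation. In each generation the penalty is either $(1-c_n)$ on $\omega$ or $(m^n-1)/m^n$ on the count, and both are less than $m^{-b_n}$ once $b_n$ is small. Hence $\omega$ is supported on a set of $s$-dimensional Hausdorff measure zero, so $\dim\omega\le n-b_n$.

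\emph{Main obstacle.} The hard part is Step 3's bookkeeping: the Hausdorff-type content at $s$ near $n$ is not additive, so the generations cannot be combined naively. I would first try to handle this with net contents on the fixed $m$-adic lattice, choosing $m$ large to absorb the constants from Step 1.
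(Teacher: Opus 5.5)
There are two genuine gaps, one on each side of the dichotomy.

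\emph{First gap: Step 1 uses the wrong dichotomy.} If a hole means a child-scale region disjoint from $\partial\Omega$, the complementary case says only that $\partial\Omega$ meets every child. That gives no lower bound on capacity or hitting probability. Take $\Omega=B(0,R)\setminus K$ with $K$ a finite set containing one point in each child of $Q$. Then $\partial\Omega$ meets every child, yet Brownian motion almost surely never hits $K$. Kakutani's theorem is in any case only qualitative.

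Holes must be measured quantitatively, by $(n-\rho)$-dimensional content with $n-\rho>n-2$. Suppose the content of $\partial\Omega$ in a ball $B(x,r)$ is at least $a\,\omega_n r^{n-\rho}$. Frostman's lemma (Lemma \ref{frostman}) then gives $\mu$ with $\mu(Q)\le(\side Q)^{n-\rho}$ and mass at least $a\,\omega_n r^{n-\rho}$. Its potential $\int|y-z|^{2-n}\,d\mu(y)$ is $O(r^{2-\rho})$ everywhere and at least $a\,\omega_n r^{2-\rho}$ at $x$, and the maximum principle turns this into the hitting bound (Lemma \ref{bourgain-round}).

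Correspondingly, the hole case is not exploited by discarding an empty child. Instead one covers $\partial\Omega$ inside the low-content region by cubes of small total content and combines these with a mixed-scale partition of the rest of the cube (Lemma \ref{l:alt-1-cover}). Your count $(m^n-1)/m^n$ belongs to a dichotomy whose other half is false.

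\emph{Second gap: Steps 2 and 3 use the Brownian estimate in a form that cannot bound dimension from above.} Harmonic measure is conserved among children, $\sum_i\omega(Q_i)=\omega(Q)$, so no generation incurs a ``loss in $\omega$.'' A chain bound $\omega(Q_k)\le(1-c_n)^j\omega(Q_0)$ is an upper bound on the mass of small cubes, which is the kind of bound that yields \emph{lower} bounds on dimension. A uniformly spread measure, with $\omega(Q_k)=m^{-nk}\omega(Q_0)$, satisfies it with room to spare and has dimension $n$.

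What is needed is concentration relative to volume. In the no-hole case, the inner child $P_0$, which fills a fraction $V$ of the volume of $P$, must receive at most a fraction $W<V$ of $\omega(P)$. An estimate $\omega(P_0)\le(1-c)\,\omega(P)$ is useless unless $1-c<V$. Obtaining it also requires controlling repeated returns of Brownian motion to the barrier: Bourgain used many concentric rings to get $\eta^{-1}(1-\eta)^k$, while Lemmas \ref{alt-2-lemma} and \ref{alt-2-lemma-P} do it with one thin ring.

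The bookkeeping then runs through $\sum\omega(Q)^\alpha(\vol Q)^{1-\alpha}$. This quantity never increases, by H\"older's inequality, and it drops by a factor $\gamma_\alpha<1$ at each no-hole cube (Lemma \ref{alt-2-bound}). It is used inside a stopping-time tree (Theorem \ref{dim-lemma}):
\begin{itemize}
\item chains with few no-hole ancestors are covered efficiently in content;
\item among cubes reached after many no-hole ancestors, those with $\omega(Q)<(\side Q)^{n-\sigma}$, for a suitable $\sigma>0$, carry negligible total measure;
\item the remaining such cubes satisfy $\sum(\side Q)^{n-\sigma}\le\sum\omega(Q)\le 1$.
\end{itemize}
Any $\gamma<1$ then suffices, traded against the content cost $\tau$ through $\lambda=-\log_\tau\gamma/(1-\alpha)$. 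Your criterion that both penalties fall below $m^{-b_n}$ does not capture this mechanism.
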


Theorem \ref{t:bourgain} implies that whenever $\dim_H\partial\Omega>n-b_n$, harmonic measure is carried by a set $E\subset\partial\Omega$ with $\dim_H E<\dim_H\partial\Omega$. In probabilistic terms, Brownian motion almost surely exits $\Omega$ through a set that is geometrically smaller than the full boundary. This phenomenon, called \emph{dimension drop}, was observed by Kaufman and Wu \cite{kw-snowflake} for the domain bounded by a von Koch snowflake and by Carleson \cite{carleson-cantor} for Cantor-type planar domains, and its study has grown substantially in recent years. For classical harmonic measure, see \cite{Volberg-dd}, \cite{cantor-dim}, \cite{Batakis-Zdunik}, \cite{azzam-dimension-drop}, \cite{DJJ-no-drop}, \cite{Tolsa-dd}, \cite{cai-tolsa}; for $p$-harmonic measures with $p\neq2$, see \cite{LNV-p-drop}, \cite{ALV-p-drop}; for elliptic measures, see \cite{Sweezy-planar}, \cite{Sweezy-space}, \cite{DM-good-elliptic}, \cite{Perstneva-good-elliptic}, \cite{gmpt-dd}, \cite{gm-dd}; for Robin harmonic measures, see \cite{robin-dimension}; and for caloric and parabolic measures, see \cite{Sweezy-ac}, \cite{dim-caloric}. For further results related to Theorem \ref{t:wolff}, see \cite{LVV}, \cite{kenigpreisstoro}, \cite{Badger-nullsets}, \cite{azzam-corkscrews}.

Let $d_n$ denote the supremum of the upper Hausdorff dimensions of harmonic measures over all domains in $\RR^n$. The optimal constant in Theorem \ref{t:bourgain},
\begin{equation}\label{bourgain-constant-def}
b_n:=n-d_n,
\end{equation}
is called \emph{Bourgain's constant} for harmonic measure in $\RR^n$. Thus, the theorems of Bourgain and Wolff imply $0<b_n<1$ for every $n\geq3$. Moreover, if $\Omega\subset\RR^n$, then the cylinder $\Omega\times\RR\subset\RR^{n+1}$ satisfies $\overline{\dim}_H\,\omega_{\Omega\times\RR}=\overline{\dim}_H\,\omega_\Omega+1$. Consequently,
\begin{equation}\label{b-decreasing}
d_n+1\leq d_{n+1}
\qquad\text{and}\qquad
b_n\geq b_{n+1}
\quad\text{for all $n\geq2$}.
\end{equation}
The planar theory provides an important baseline. Using conformal mappings and subharmonicity of \(\log|f|\) for holomorphic functions \(f\), Jones and Wolff \cite{Jones-Wolff} proved that \(b_2=1\). Makarov \cite{Makarov} proved that on every simply connected planar domain the lower and upper Hausdorff dimensions of harmonic measure are exactly \(1\). These planar results helped motivate the theorems of Bourgain and Wolff in higher dimensions.

For $n\geq3$, the exact value of $b_n$ is unknown. The case $n=3$ is especially compelling, both as the first unknown dimension and because of its connection with diffusion in physical space \cite{filoche-passivation}. Bishop \cite{Bishop-questions} conjectures that
$b_3=1/2$, and more generally that $b_n=1/(n-1)$ for every $n\geq3$. More than thirty years later, even the order of magnitude of $b_n$ as $n\rightarrow\infty$ remains unknown. A numerical experiment of Grebenkov \emph{et al.} \cite{grebenkov-numerical-experiment} on the cubical von Koch snowball in Figure \ref{fig:physics} suggests that $b_3\leq0.995$; the first challenge in \S2 calls for independently reproducing or refuting this estimate and for numerical investigations of harmonic measure on other explicit domains in $\RR^3$.
\begin{figure}[t]
\begin{center}
\includegraphics[width=0.9\textwidth]{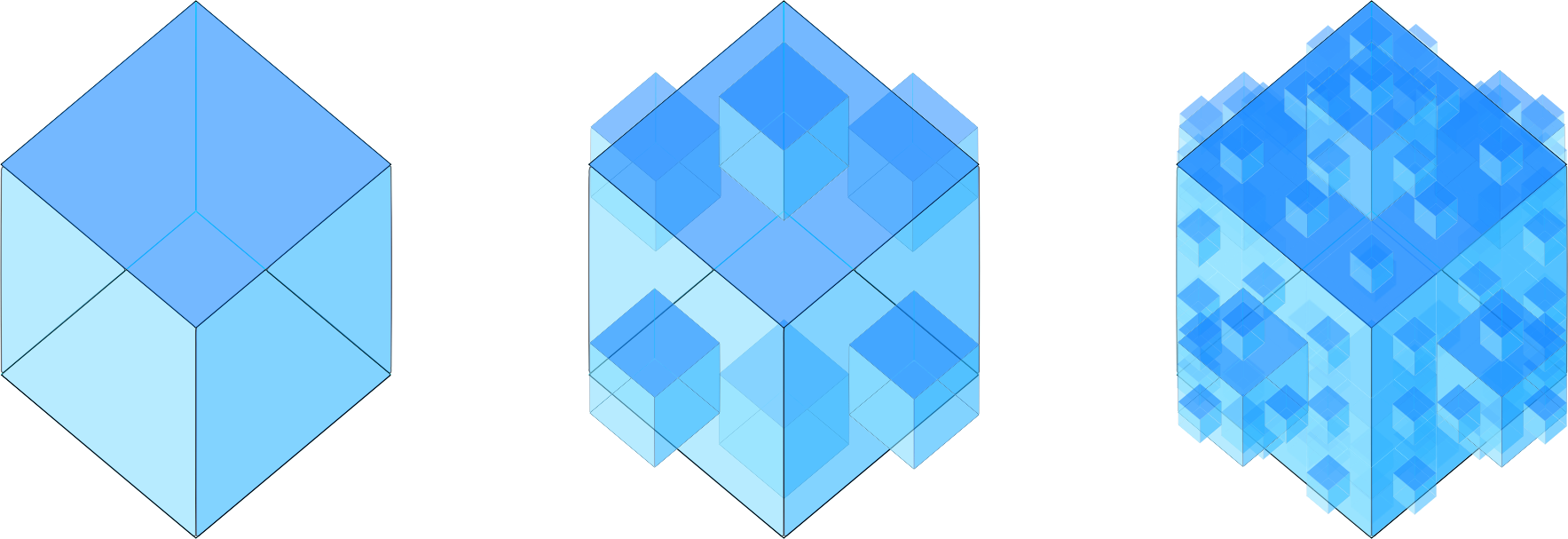}
\end{center}
\caption{Levels $0$, $1$, and $2$ of the cubical von Koch snowball domain $G\subset\RR^3$ examined in \cite{grebenkov-numerical-experiment}. Each square of side length $3^{-l}$ in the level $l$ boundary carries an outward cube of side length $3^{-(l+1)}$, and $G$ is the interior of the union of the finite-level approximations. Here $\dim_H\partial G=\log_3 13$, while the numerics of Grebenkov \emph{et al.} give $\dim_H\omega_G=2.007\pm0.002$.}
\label{fig:physics}
\end{figure}
More recently, Badger and Genschaw revisited Bourgain's proof and obtained the first explicit lower bounds on $b_n$ in dimensions $n\geq 3$.

Let $\log$ denote the natural logarithm.

\begin{oldtheorem}[{Badger--Genschaw \cite{15-9s}}]\label{bg:old}
We have $b_3\geq 1\times10^{-15}$, $b_4\geq2\times10^{-26}$, and
$b_n\geq0.6n^{-2n(n-1)}/\log n$ for all $n\gg3$. In particular, the upper Hausdorff dimension of harmonic measure on any domain in $\RR^3$ is at most $2.99999\,99999\,99999$.
\end{oldtheorem}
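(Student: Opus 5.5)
The plan is to follow Bourgain's scheme as quantified by Badger and Genschaw; only the constants need tracking. Fix a bounded domain $\Omega\subset\RR^n$ with pole $x$, and let $s<n$ close to $n$. The goal is to show that $\omega_\Omega$ is carried by a set of Hausdorff dimension at most $s$ whenever $n-s<b$, where $b$ is the advertised constant. Work with $m$-adic cubes for a fixed integer $m$ depending on $n$.

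\textbf{Step 1: the alternative.} For each cube $Q$ meeting $\partial\Omega$ there are two cases.
\begin{itemize}
\item[(a)] The complement $\RR^n\setminus\Omega$ fills a definite proportion of $Q$ in the sense of $s$-dimensional net content at some scale below $\side Q$.
\item[(b)] Otherwise, $\partial\Omega\cap Q$ has small $s$-content.
\end{itemize}
In case (a), a capacity/Wiener-type estimate shows that Brownian motion started at the scale of a slightly enlarged cube exits $\Omega$ before reaching a concentric subcube with probability at least some $\delta_n>0$. Iterating this over $k$ nested generations gives harmonic measure decay $(1-\delta_n)^k$. In case (b), the $s$-content of the boundary drops by a factor of $m^{-(n-s)}$ times a combinatorial count of children.

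\textbf{Step 2: the stopping-time decomposition.} Run a stopping-time argument on the $m$-adic tree, splitting $\partial\Omega$ into the following pieces:
\begin{itemize}
\item a piece carrying little harmonic measure, produced by many (a)-generations;
\item a piece of small $s$-net content, produced by many (b)-generations.
\end{itemize}
A Frostman-type lemma for net contents then converts this into the statement that $\omega_\Omega$-a.e.\ point lies in a set of $\Haus^s$-measure zero. Explicitly, one proves $\omega(Q')\le C\,\side(Q')^{s'}$ on most of the measure for some $s'$ slightly below $n$. Balancing $(1-\delta_n)^k$ against $m^{-(n-s)k}$ forces $n-s\lesssim \delta_n/\log m$. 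With the explicit values of $\delta_n$ and $m$, this gives $b_n\ge 0.6\,n^{-2n(n-1)}/\log n$ for large $n$, and direct numerical evaluation gives $10^{-15}$ and $2\times10^{-26}$ in dimensions $3$ and $4$.

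\textbf{Main obstacle: quantifying $\delta_n$.} Net contents are non-additive, so the dichotomy requires $m$ to be large, with $m\approx n^{n}$. Moreover, the hitting probability estimate must hold uniformly for any configuration of holes of small content. I would obtain $\delta_n$ from the following chain:
\begin{enumerate}
\item a lower bound on Newtonian capacity in terms of $s$-content, using a Frostman measure and the kernel $|x-y|^{2-n}$, valid because $s>n-2$;
\item the estimate $\omega\ge c\,\mathrm{cap}$ for the exit event;
\item the explicit constants from both estimates.
\end{enumerate}
The resulting exponential loss in $n$ is where the $n^{-2n(n-1)}$ arises.
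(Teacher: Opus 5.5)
The paper does not reprove this statement. It quotes it from \cite{15-9s}, whose argument is the fixed $m$-adic, multi-ring version of the scheme refined in Sections~\ref{sec:gmt}--\ref{sec:parameters}. Your local hitting estimate (Frostman measure, Newtonian potential, maximum principle) is the right ingredient, but the way you use it is not.

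\textbf{The barrier must beat the volume fraction.} A bound $\omega(\text{inner cube})\le(1-\delta_n)\,\omega(\text{cube})$, iterated over generations, is satisfied by Lebesgue measure itself whenever the inner cube has volume fraction at most $1-\delta_n$. So it cannot produce any dimension drop. What the barrier must give, inside one cube $P$, is a comparison with volume: the inner child $P_0$, of volume fraction $V$, receives harmonic-measure fraction at most $W<V$. In \cite{15-9s} this comes from $k$ concentric rings, $\omega(P_0)\le\eta^{-1}(1-\eta)^k\omega(P)$; here it is Lemma~\ref{alt-2-lemma-P}. H\"older's inequality then gives $\sum_i\omega(P_i)^\alpha(\vol P_i)^{1-\alpha}\le\gamma\,\omega(P)^\alpha(\vol P)^{1-\alpha}$ with $\gamma<1$ (Lemma~\ref{alt-2-bound}). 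This is the quantity that the type~(II) iteration of Theorem~\ref{dim-lemma} accumulates.

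\textbf{The dichotomy must be localized.} Obtaining $W<V$ requires the hitting estimate at every location in the ring. So the dichotomy must be ``a hole somewhere in the ring'' versus ``no hole anywhere in the ring''. Your (a)/(b), stated for $\partial\Omega\cap Q$ as a whole, is not this: large total content in $Q$ is compatible with a hole in the ring through which Brownian motion reaches $P_0$.

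\textbf{The conversion step is reversed.} Upper bounds $\omega(Q')\le C(\side Q')^{s'}$ bound the dimension from below (mass distribution principle). To bound $\overline{\dim}_H\,\omega$ from above you need the reverse inequality $\omega(Q)\ge(\side Q)^{n-\sigma}$ on the stopped cubes (the family $\mathcal G$ in the proof of Theorem~\ref{dim-lemma}). The stopped cubes that violate it carry measure that is small by powers of $\gamma$. Frostman's lemma enters only inside Bourgain's estimate, not at this step.

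\textbf{The bottleneck for the constant is the hole alternative.} The trade-off between $\gamma$ and the content growth factor $\tau$ affects only $\lambda/(1+\lambda)$, which stays bounded below (in Theorem~\ref{t:sharp-4}, $\lambda>1$). What limits $\rho=n-s$ is the hole alternative, in which the $(n-\rho)$-content must not increase: $\sum_i(\side Q_i)^{n-\rho}\le(\side P)^{n-\rho}$. With uniform subdivisions this forces $\rho$ below roughly the relative volume of the hole divided by $\log m$ (Example~\ref{example:goes-to-infinity}, Principle~\ref{p:hole}). In a fixed $m$-adic lattice that relative volume is about $m^{-n}$, and $m$ must be large in terms of $n$ for the multi-ring barrier to achieve $W<V$. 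This, not an exponentially small hitting probability, is the source of the superexponential factor and of the logarithm.

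The paper itself shows this. Its hitting probability is $\eta\approx2/n$ (Lemma~\ref{local-harmonic-measure} with $a=n/(n+1)$). Yet Proposition~\ref{p:sharpness-exponent} shows the method is still capped at $(n-1)^{-2n+o(n)}$ by the hole term. Your heuristic $n-s\lesssim\delta_n/\log m$ would instead predict a polynomial bound.

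\textbf{The constants are not derived.} You never fix $m$, the number of rings, the content threshold, or $\rho$. So the values $10^{-15}$, $2\times10^{-26}$, and $0.6\,n^{-2n(n-1)}/\log n$, which are the whole content of the statement, are asserted rather than derived.
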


For comparison with Bishop's conjecture and the new estimates below, Theorem \ref{bg:old} implies that there exists some $n_0$ (uncomputed) such that
\[
b_n\geq\left(\frac{1}{n-1}\right)^{2n^2}
\quad\text{for all $n\geq n_0$}.
\]

\subsection{New results and proof strategy}\label{ss:new-stuff}

The main result of this paper improves both the explicit low-dimensional bounds in Theorem \ref{bg:old} and the asymptotic exponent as $n\rightarrow\infty$. We also obtain the first fully effective bound in arbitrary dimensions.

\begin{theorem}\label{t:big-dim}
For all $n\geq 2$,
\begin{equation}\label{e:main-bound}
b_n\geq\left(\frac{1}{n-1}\right)^{2n+6.12\sqrt n}.
\end{equation}
\end{theorem}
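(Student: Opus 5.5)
For $n=2$ the statement is trivial, since $b_2=1$ by Jones--Wolff. So the work is for $n\ge3$, and I would follow Bourgain's scheme in the form set up in the abstract: a dimension criterion for $s$-dimensional contents, combined with a one-ring estimate for Brownian motion.

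\textbf{Step 1 (dimension criterion).} Fix $s<n$ and a scale ratio $\rho\in(0,1)$. I would define an $s$-dimensional content using families of pairwise disjoint cubes in general position, replacing the usual net content attached to a fixed $m$-adic lattice. The goal is a Frostman-type alternative for this content. Suppose that for every cube $Q$ meeting $\partial\Omega$ with pole outside $2Q$, one of two things holds:
\begin{itemize}
\item[(a)] $\partial\Omega\cap Q$ has content at most $(1-\epsilon)\,\side(Q)^s$ at the next scale;
\item[(b)] Brownian motion from $\partial(2Q)$ reaches a fixed subcube with probability at most $1-\delta$.
\end{itemize}
Then iterating over scales shows $\omega$ is carried by a set of dimension at most $s$, provided roughly
\[
(1-\delta)\,\rho^{-(n-s)}<1 .
\]
The loss compared with Badger--Genschaw comes from the number of children. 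If we count only disjoint cubes meeting a ring, rather than all $m^n$ lattice subcubes, the loss factor becomes polynomial in $n$.

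\textbf{Step 2 (one-ring Bourgain alternative).} This is the main geometric step. Surround the cube $Q$ by one thin spherical shell of relative width $\rho$. There are two cases.
\begin{itemize}
\item If the shell contains no ``large round hole'', i.e.\ no ball of radius $\approx\rho\,\side(Q)$ in the shell missing $\partial\Omega$, then $\partial\Omega$ is $\rho$-dense in the shell. By Kakutani, together with a capacity estimate for a ball of radius $\rho r$ seen from distance $\rho r$, Brownian motion crossing the shell hits $\partial\Omega$ with probability at least $\delta\approx c\,(n-1)^{-O(1)}$. The harmonic measure of a ball from a nearby point in $\RR^n$ is a dimensional constant, and I would compute it explicitly.
\item Otherwise the shell has a round hole. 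Then a disjoint family of small balls covering $\partial\Omega$ near the hole carries less content, which gives case (a). Using round holes avoids the $\sqrt n$ diameter losses that rectangular holes produce.
\end{itemize}

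\textbf{Step 3 (optimization).} I would choose $\rho\approx(n-1)^{-1}$ and $n-s=\delta/\log(1/\rho)$. The bound $(n-1)^{-(2n+6.12\sqrt n)}$ should come out as follows: a $(n-1)^{-2n}$ factor from the packing count in the shell, and the $\sqrt n$ term from the concentration of Gaussian/harmonic measure on the sphere when estimating the hitting probability $\delta$. I would check the constant $6.12$ numerically for small $n$ and by Stirling asymptotics for large $n$.

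The main obstacle is Step 1: making non-additive contents with $s$ close to $n$ satisfy the iteration with only a polynomial-in-$n$ loss, while using general-position disjoint cubes. This needs a careful Vitali-type selection whose constant does not grow like $m^n$.
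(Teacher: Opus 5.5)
There is a genuine gap already in the dichotomy of Step~2. You split cubes by porosity, i.e.\ by whether some ball of radius $\approx\rho\side Q$ in the shell misses $\partial\Omega$. You then claim that if no such ball exists, Brownian motion crossing the shell hits $\partial\Omega$ with probability $\delta>0$. That implication is false, because density says nothing about capacity. Take $\Omega=B(0,R)\setminus F$ with $F$ a finite set that is $\rho\side Q$-dense in the shell. Then $\partial\Omega$ meets the shell only in the polar set $F$, which Brownian motion a.s.\ never hits. The paper measures holes by content instead: there is a hole at $x\in\Gamma_P$ if $\Net{s}_\infty(\partial\Omega\cap\bigcup\mathcal T_{P,x})<a\,\omega_n(\side P/4L)^{s}$. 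In the no-hole case, Frostman's lemma (Lemma~\ref{frostman}) and the potential bound of Theorem~\ref{t:O} give the local hitting estimate $\eta=\frac{2a}{n+n^{-5}}(1-2^{2-n})$ (Lemmas~\ref{bourgain-round} and~\ref{local-harmonic-measure}). In the hole case, the content gain must come from a merely \emph{partial} hole, which is why the last term of \eqref{rho-condition} appears.

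Your Step~1 criterion and Step~3 bookkeeping are also off. At a no-hole cube, harmonic measure is redistributed, not lost. Dimension drops only if the inner child $P_0$ receives a strictly smaller share $W$ of $\omega(P)$ than its share $V=(1-2/M)^n$ of $\vol P$. The gain per such cube is the relative entropy $W\log\frac{W}{V}+(1-W)\log\frac{1-W}{1-V}$, measured against $\log\tau$ (Lemma~\ref{alt-2-bound} and Theorem~\ref{dim-lemma}, with $\alpha\to1$ as in the proof of Theorem~\ref{t:bb}). A bound $1-\delta$ with $\delta$ small on reaching a subcube is compatible with $\omega$ being comparable to Lebesgue measure; for the inner cube of a shell of relative width $\approx 1/(n-1)$, one has $V\approx e^{-2}$. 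Moreover, $n-s=\delta/\log(1/\rho)$ with $\delta\approx(n-1)^{-O(1)}$ would give a \emph{polynomial} lower bound on $b_n$. That contradicts your own factor $(n-1)^{-2n}$ and is far beyond what the one-ring alternative can give (Proposition~\ref{p:sharpness-exponent}).

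The missing constraint is on the hole side. A hole of relative volume $v$ pays for subdividing the collar only if $n-s$ is at most about $v$, up to polynomial factors in $n$ (Principle~\ref{p:hole}). At the same time, $W<V$ forces the hole to be tiny. By Lemmas~\ref{alt-2-lemma} and~\ref{alt-2-lemma-P}, $1/W-1=\frac{n\omega_n}{\omega_{n-1}}(\frac{4L}{3M}-1)\eta$, where $\frac{n\omega_n}{\omega_{n-1}}\approx\sqrt{2\pi n}$ and $\eta<2/n$, while $1/V-1\geq 2n/M$. Hence $L\gtrsim n^{3/2}$, and the hole radius $\side P/(4L)$ is $O(n^{-3/2}\side P)$ rather than comparable to $\side Q/(n-1)$. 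This yields $n-s\leq n^{-2n+o(n)}$.

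So concentration on the sphere, which is what produces $\frac{n\omega_n}{\omega_{n-1}}\approx\sqrt{2\pi n}$, feeds the leading exponent $2n$, not the $\sqrt n$ term. The $6.12\sqrt n$ is bookkeeping from the explicit choice $L=3(n+\lceil q\rceil+1)$ with $q=\omega_{n-1}/(\omega_n\eta)<0.3n^{3/2}$. That choice costs a factor of about $(1+3/\sqrt n)^n\approx e^{3\sqrt n}$, together with polynomial prefactors. The cases $n=3,4,5$ need separately tuned parameters checked numerically (Theorem~\ref{t:sharp-small}). Finally, the Vitali-type selection you flag as the main obstacle is not needed: Theorem~\ref{dim-lemma} builds its tree by choosing a disjoint cover at each node.
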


\begin{corollary}\label{c:six-decimals}
We have $b_3\geq 2^{-(6+6.12\sqrt3)}=1.0065\ldots\times10^{-5}$. Thus, the upper Hausdorff dimension of harmonic measure on any domain in $\RR^3$ is less than $2.99999$.
\end{corollary}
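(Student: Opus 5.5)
Corollary \ref{c:six-decimals} follows from Theorem \ref{t:big-dim} with $n=3$; no other input is needed. Since $n-1=2$, the bound \eqref{e:main-bound} reads
\[
b_3\geq 2^{-(6+6.12\sqrt3)}.
\]

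The only work is evaluating this number carefully enough to certify five decimals. We have $\sqrt3=1.7320508\ldots$, so $6.12\sqrt3=10.6001509\ldots$ and the exponent is $6+6.12\sqrt3=16.6001509\ldots$. Then
\[
2^{-16.6001509\ldots}=2^{-16}\cdot 2^{-0.6001509\ldots},
\]
where $2^{-16}=1.52587890625\times10^{-5}$. For the second factor, use $2^{-0.6}=e^{-0.6\log 2}$ with $0.6\log2=0.4158883\ldots$. This gives $e^{-0.4158883}=0.65975\ldots$. The extra $2^{-0.0001509}\approx 1-1.046\times10^{-4}$ lowers this to about $0.65968$. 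Multiplying,
\[
b_3\geq 1.52588\times10^{-5}\times0.65968\approx1.0066\times10^{-5}.
\]
This is consistent with the stated value $1.0065\ldots\times10^{-5}$. For the corollary, all we need is the crude but rigorous inequality $b_3>10^{-5}$. It can be certified with interval bounds: $\sqrt3<1.7321$ gives exponent $<16.6005$, and $2^{-16.6005}>2^{-16.61}=2^{-16}\cdot2^{-0.61}>1.5258\times10^{-5}\times 0.655>0.999\times10^{-5}$. That last margin is too thin as a certificate, so I would tighten it to $2^{-0.6005}>0.6596$ (for example via $\log 2<0.693148$ and $e^{-x}\geq 1-x+x^2/2-x^3/6$ carried to enough terms). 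This yields a product exceeding $1.0064\times10^{-5}>10^{-5}$.

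Finally, by the definitions \eqref{bourgain-constant-def}, $d_3=3-b_3\leq 3-1.0065\times10^{-5}<3-10^{-5}=2.99999$. So the upper Hausdorff dimension of harmonic measure on every domain in $\RR^3$ is strictly less than $2.99999$. The only step that needs care is the rigorous numerical lower bound on $2^{-0.6001509\ldots}$. Everything else is immediate from Theorem \ref{t:big-dim}.
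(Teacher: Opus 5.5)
Your route matches the paper's: the corollary is just the case $n=3$ of Theorem~\ref{t:big-dim}, and your evaluation $2^{-(6+6.12\sqrt3)}=1.00659\ldots\times10^{-5}$ is correct and consistent with the truncated value in the statement.

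However, the interval certificate, which you single out as the only delicate step, rests on a false inequality. We have $2^{-0.6005}=2^{-0.6}\cdot2^{-0.0005}=0.65975\ldots\times0.99965\ldots=0.65952\ldots<0.6596$. So your claimed bound $2^{-16.6005}>1.0064\times10^{-5}$ also fails; in fact $2^{-16.6005}=1.00635\ldots\times10^{-5}$.

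Your first attempt likewise does not just have a thin margin; it lands on the wrong side. Getting $2^{-16.61}>10^{-5}$ would require $2^{-0.61}>2^{16}\cdot10^{-5}=0.65536$, whereas $2^{-0.61}=0.65519\ldots$.

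The repair is immediate. The true inequality $2^{-0.6005}>0.6595$ gives $2^{-16.6005}>1.5258\times0.6595\times10^{-5}>1.006\times10^{-5}>10^{-5}$. That is all the second sentence of the corollary needs. If you also want to certify the value $1.0065\times10^{-5}$ used in your last paragraph, use $\sqrt3<1.732051$. Then the exponent is below $16.6002$, and $2^{-16.6002}=1.00656\ldots\times10^{-5}$.
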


The proof follows Bourgain's original outline, which is based on a local dichotomy. Roughly speaking, at each scale either the boundary has a sufficiently large hole, yielding an efficient cover, or the absence of such holes creates a barrier that makes it difficult for Brownian motion to pass deeper into the domain. Previous implementations of the second alternative required many concentric barrier rings. Rather than optimizing a single step, we rework every stage of the argument—replacing coarse estimates by sharp computations, allowing the covering geometry to be chosen adaptively, and introducing new barrier estimates—to eliminate accumulated quantitative losses and arrive at what we believe is the essentially optimal form of Bourgain's alternative.

On the geometric side, Section \ref{sec:cubical-content} develops quantitative principles for $s$-dimensional Hausdorff contents when $s<n$, where non-additivity makes the efficiency of a cover sensitive to the scales of its cubes: a hole in a cubical region can favor a mixed-scale cover when $s$ is close to $n$, while a single cube may be more efficient as $s$ decreases. A general treatment of such calculations does not appear to exist elsewhere. In \S\ref{sec:gmt}, we replace the fixed $m$-adic lattice used previously by a Bourgain-type dimension lemma that permits pairwise disjoint covering cubes in general position and allows the entropy order $\alpha\in(0,1)$ to be chosen to sharpen the estimate. In \S\ref{s:polar}, we develop a method of \emph{approximate polar integration} that transfers radial integral estimates against Frostman measures to estimates against Lebesgue measure with explicit control of the error. Our method yields a nearly sharp Bourgain estimate of the local harmonic measure in round regions.

The argument culminates in the \emph{one-ring alternative} of \S\ref{sec:parameters}. At a given scale, choose a thin ring. If the boundary has a sufficiently large hole in the ring, then the hole yields an efficient mixed-scale cover and relative Hausdorff content below \(1\). If no such hole exists, the local estimate on harmonic measure makes the ring a barrier to Brownian motion. Then a new computation converts this local barrier into quantitative control of harmonic measure crossing a single ring, replacing the previous multi-ring construction. A further refinement accounts for arbitrarily many returns of Brownian motion to the barrier and sharpens the constant. The resulting reduction in geometric cost allows the Hausdorff-content gain in the large-hole case to remain effective for exponents \(s\) farther below \(n\), producing the improved bounds on \(b_n\). These gains are cumulative: reaching the five-decimal estimate in dimension $3$ depends on the refinements working together. In \S\ref{s:punchline}, an explicit family of parameters yields Theorem \ref{t:big-dim} for $n\geq6$, while separately optimized explicit choices of parameters gives the stated estimate in dimensions $3$, $4$, and $5$.

\section{Ten challenges on dimension of harmonic measure}

Given the general lack of progress on computing Bourgain's constants $b_n$, when $n\geq 3$, we propose ten intermediate challenges in order to spur further research on dimension of harmonic measure. The challenges range from concrete problems that invite direct computation to broader, more open-ended theoretical questions. Some ask for explicit examples or quantitative estimates, while others concern qualitative phenomena and the nature of dimension drop. We start with four challenges in $\RR^3$.

\begin{challenge}[Numerical Experiment Challenge]\label{challenge:numerical}
Using either the same or different methodology as in \cite{grebenkov-numerical-experiment}, perform an experiment in order to numerically estimate the dimension of harmonic measure on the cubical von Koch snowball domain in Figure \ref{fig:physics}. Estimate the dimension of harmonic measure on other domains in $\RR^3$ such as domains whose boundaries are cubical Cantor sets.
\end{challenge}

\begin{challenge}[Explicit Upper Bound Challenge]\label{challenge:explicit-b3}
Prove that $b_3\leq w<1$ for some explicit number $w$. If possible, rigorously verify the numerical bound discussed above.
\end{challenge}

If $b_3=0.5$, as conjectured, then it should be possible to approximate this value from either side. However, the current bounds $10^{-5}\leq b_3\stackrel{?}{\leq} 0.995$ are frustratingly far from $0.5$. In the authors' view, a resolution of the following challenge would represent a significant accomplishment on par with determining the value of $b_3$.

\begin{challenge}[One Decimal Challenge]\label{challenge:one-decimal}
For trivial reasons, $b_3\geq 0.1$ or $b_3\leq 0.9$, but we don't currently know the truth of either bound. Prove one (or both).
\end{challenge}

Note that the previous challenges could each be resolved by estimates on the dimension of harmonic measure alone, without ever computing the dimension of an actual domain. A resolution of the following distinct challenge would be very welcome.

\begin{challenge}[Exact Dimension Challenge]\label{challenge:exact-dimension}
Construct a domain $\Omega\subset\RR^3$ such that $\overline{\dim}_H\,\omega_\Omega>2$ and compute $\overline{\dim}_H\,\omega_\Omega$ exactly.
\end{challenge}

Let us turn to high-dimensional asymptotics. We know that $b_2=1>b_3$ and $b_n\geq b_{n+1}$ for all $n\geq 3$; see \eqref{b-decreasing}. We do not currently know whether or not $b_n>b_{n+1}$ for all $n\geq 3$. To aid discussion, for each $n\geq 2$, define the \emph{Wolff constant for harmonic measure} by
\begin{equation}
w_n:=\sup_{\Omega\subset\RR^n} w_\Omega,
\qquad
w_\Omega:=\overline{\dim}_H\,\omega_\Omega-(n-1).
\end{equation}
Of course, $w_n+b_n=1$, but the two constants have different interpretations: $w_n$ measures how much larger the dimension of harmonic measure can be than the classical value $n-1$, while $b_n$ measures how much smaller it must be than the ambient dimension $n$. A folklore problem is to prove that $\lim_{n\rightarrow\infty} b_n = 0$, or equivalently, $\lim_{n\rightarrow\infty} w_n = 1$, but remains intractable. The following challenge may serve as a more accessible first step.

\begin{challenge}[Growing Excess Challenge]\label{challenge:growing-excess}
Construct a family of domains $\Omega^n\subset\RR^n$, for $n\geq3$, arising from a single geometric construction, such that
\begin{equation}\label{w-ascending}
0<w_{\Omega^3}<w_{\Omega^4}<w_{\Omega^5}<\cdots.
\end{equation}
\end{challenge}

What constitutes a solution to Challenge \ref{challenge:growing-excess} is open to interpretation. A solution should exclude trivial increases obtained by taking products with Euclidean factors. If you start with domains $\Omega_1,\Omega_2\subset\RR^3$ that satisfy $2<\overline{\dim}_H\,\omega_{\Omega_1}<\overline{\dim}_H\,\omega_{\Omega_2}$ (e.g.~built using Wolff's construction), then $\Omega_2\times\RR\subset\RR^4$ has $w_{\Omega_2\times\RR}=w_{\Omega_2}>w_{\Omega_1}$, but the pair $\Omega^3:=\Omega_1$ and $\Omega^4:=\Omega_2\times\RR$ does not solve the challenge.
The goal is to find a procedure that takes a given domain in $\RR^n$ and produces a domain in $\RR^{n+1}$ that has larger Wolff excess.
Challenge \ref{challenge:growing-excess} is difficult, because it is not clear which domains should be used. The obvious extension of the cubical von Koch snowball to higher dimensions does not solve the challenge. To wit, if $G^n\subset\RR^n$ is obtained from $[0,1]^n$ by repeatedly attaching smaller cubes to the middle-third of each face, then
$\overline{\dim}_H\,\omega_{G^n}
\leq
\dim_H\partial G^n
=
\log_3(3^{n-1}+2n-1).$ Since the latter numbers are asymptotic to $n-1$, we have $\lim_{n\rightarrow\infty} w_{G^n}=0$. In the authors' opinion, this is also indirect evidence that $\overline{\dim}_H \omega_{G^3}$ is likely to be much smaller than $w_3$.

An outcome of the present investigation, which culminated in Theorem \ref{t:big-dim}, is that an exponential lower bound on Bourgain's constant probably cannot be established with Bourgain's alternative alone. Thus, despite being facially weaker than Bishop's conjecture, a resolution of the following challenge would likely represent significant progress towards understanding the dimension of harmonic measure.

\begin{challenge}[Exponential Bound Challenge]\label{challenge:exponential}
Prove $b_n\geq 10^{-Cn}$ for all $n\geq n_0$ for some constants $C$ and $n_0$, or prove \emph{any} sub-exponential lower bound on $b_n$ when $n$ is large.
\end{challenge}

Packing measures and packing dimension, introduced by Tricot \cite{Tricot-packing}, provide another fundamental notion of size in geometric measure theory. Packing dimension may be viewed as the countable stabilization of a better-known dimension often called upper Minkowski or box-counting dimension; see e.g.~\cite[Chapter 5]{Mattila}. For each $n\geq2$, define
\begin{equation}
p_n:=n-\sup_{\Omega\subset\RR^n}\overline{\dim}_P\,\omega_\Omega,
\end{equation}
which is the packing-dimension analogue of Bourgain's constant. Since packing dimension dominates Hausdorff dimension, $0\leq p_n\leq b_n$.

\begin{challenge}[Packing Dimension Challenge]\label{challenge:packing}
Prove or disprove that $p_n>0$ for every $n\geq2$. If $p_n>0$, is $p_n=b_n$? In particular, is $p_2=1$?
\end{challenge}

At present, it is not clear to us whether one should expect a universal packing dimension drop. The evidence is weak and somewhat mixed. On the pro side, Housworth already proved the packing dimension analogue of Makarov's theorem, i.e.~harmonic measure on every simply connected planar domain lives on a set of packing dimension 1 \cite{Housworth}. On the other hand, harmonic measure is already known to behave differently with respect to Hausdorff and packing measures. Choi's theorem on the lower $1$-density at twist points \cite{Choi-lower} implies that on many planar domains, including the von Koch snowflake, every set of $\sigma$-finite packing measure $\Pack^1$ has harmonic measure zero. This contrasts with a theorem of Wolff \cite{Wolff-sigma} that says for every bounded domain in $\RR^2$ there exists a set of $\sigma$-finite Hausdorff measure $\Haus^1$ and full harmonic measure. Additional weaker evidence that $p_n$ might vanish is that the proof of the Jones-Wolff theorem ($b_2=1$) relies on a notion of generalized Hausdorff capacity that does not mesh well with packing dimension. Bourgain's method for showing $b_n>0$ when $n\geq 3$ also cannot be easily modified to give an affirmative answer to Challenge \ref{challenge:packing}, because of its heavy reliance on Frostman's lemma for Hausdorff dimension.

Even if the Bourgain constants were to be computed, there are other natural extremal questions about dimension drop. Define the \emph{maximal dimension drop constant}
\[
m_n=\sup_{\Omega\subset\RR^n}
\left(\dim_H\partial\Omega-\overline{\dim}_H\,\omega_\Omega\right).
\]
There are domains with $\dim_H\partial\Omega=n$, so Bourgain's theorem implies $m_n\geq b_n$.

\begin{challenge}[Maximal Dimension Drop Challenge]\label{challenge:maximal-drop}
For each $n\geq 2$, is $m_n=b_n$ or is $m_n>b_n$? If $m_2>1$, compute $m_2$ or obtain nontrivial bounds for it.
\end{challenge}

A concrete setting for investigating where dimension drop first appears is the class of \emph{Denjoy domains}, i.e.~domains $\Omega\subset\RR^2$ whose complements are contained in a line. David, Jeznach, and Julia \cite{DJJ-no-drop} constructed $s$-Ahlfors regular Cantor sets $E\subset\RR$ with $s$ small for which harmonic measure on $\RR^2\setminus E$ is equivalent to $\Haus^s|_E$, and hence for which harmonic measure does not have dimension drop. In the opposite direction, Tolsa \cite{Tolsa-dd} proved that $\overline{\dim}_H\,\omega_{\RR^2\setminus E}<s$ whenever $E\subset\RR$ is $s$-Ahlfors regular and $1/2\leq s<1$, and asked for the sharp transition between these two phenomena.

\begin{challenge}[Denjoy Transition Challenge]\label{challenge:denjoy-transition}
Determine the least $s_0\in(0,1)$ such that, for every $s\in(s_0,1)$ and every $s$-Ahlfors regular compact set $E\subset\RR$, the harmonic measure of $\RR^2\setminus E$ satisfies $\overline{\dim}_H\,\omega_{\RR^2\setminus E}<s$. More generally, determine whether there is an analogous transition for $s$-Ahlfors regular compact sets $E\subset\RR^2$ that are not confined to a line.
\end{challenge}

We end our list with a challenge concerning elliptic measure, the analogue of harmonic measure associated to divergence-form uniformly elliptic operators $L=-\operatorname{div}(A\nabla\cdot)$. It is known that fine properties of elliptic measures can differ from classical harmonic measure, e.g.~elliptic measure on a smooth domain can be singular with respect to surface measure \cite{MMS80}, \cite{CFK81}, \cite{Wu-porous}. For $1\leq \Lambda<\infty$, define the \emph{elliptic Bourgain constant}
\begin{equation}
b_n(\Lambda)
:=
n-\sup_{\Omega,A}\overline{\dim}_H\,\omega_{\Omega}^A,
\end{equation}
where the supremum is taken over domains $\Omega\subset\RR^n$ and symmetric, measurable, uniformly elliptic matrix fields $A$ satisfying $\Lambda^{-1}|\xi|^2\leq\langle A(x)\xi,\xi\rangle$ and $\langle A(x)\xi ,\eta\rangle \leq\Lambda|\xi||\eta|$ for a.e.~$x$ and every $\xi,\eta\in\RR^n$. With this normalization, $b_n(1)=b_n$. Examples of Sweezy \cite{Sweezy-planar,Sweezy-space} show that $\lim_{\Lambda\to\infty}b_n(\Lambda)=0$ for every $n\geq2$. In the plane, Guill\'en-Mola \cite{gm-dd} proved the bound $b_2(\Lambda)\geq 2/(\Lambda+1)$. For $n\geq3$, it seems likely that positivity of $b_n(\Lambda)$ for each fixed $\Lambda<\infty$ can be obtained by a modification of Bourgain's method, but we are unaware of such a result in the literature.

\begin{challenge}[Elliptic Bourgain Challenge]\label{challenge:elliptic-bourgain}
Prove that $b_n(\Lambda)>0$ for every $n\geq3$ and every $1\leq\Lambda<\infty$, and determine the rate at which $b_n(\Lambda)\to0$ as $\Lambda\to\infty$ for each $n\geq2$. In particular, obtain quantitative upper and lower bounds for $b_n(\Lambda)$ in terms of $\Lambda$.
\end{challenge}

It would also be interesting to study analogous questions under additional regularity assumptions on the coefficients $A$ or for other classes of operators $L$.

\section{Principles of estimating cubical Hausdorff contents}\label{sec:cubical-content}

The purpose of this section is to understand quantitatively when empty space inside a cube makes it advantageous to replace the cube by a cover using smaller cubes. When the dimension parameter is below the ambient dimension, subdivision carries an intrinsic cost, so a hole must be large enough, and the covering cubes chosen efficiently enough, to compensate for that cost. We compare uniform and mixed-scale covers and develop mechanisms for deciding when a hole produces a genuine gain in $s$-dimensional cubical content. These principles will later be applied to the round holes that arise in Bourgain's alternative. As an indication of the subtleties involved, even the exact cubical Hausdorff content of a Euclidean ball appears to be unknown.

Let $n\geq 1$ be an integer. For the purpose of this paper, a \emph{cube} $Q$ in $\RR^n$ of \emph{side length} $0<\ell<\infty$ is any set of the form $I_1\times\cdots\times I_n$, where each set $I_i$ is an interval of the form $(a,a+\ell)$, $(a,a+\ell]$, $[a,a+\ell)$, or $[a,a+\ell]$. A singleton $\{a\}$ is not a cube. If $Q$ is a cube, then we may denote its side length by $\side Q$ and its volume by $\vol Q$, i.e.~$\vol Q=(\side Q)^n$. Two cubes are said to have the \emph{same type} if their $i$-th component intervals are of the same type for each $1\leq i\leq n$. Thus, there are $4^n$ types of cubes in $\RR^n$.

\begin{definition}\label{def:cubical-Hausdorff} Let $s\in[0,\infty)$. For all $\delta\in(0,\infty]$ and for all sets $E\subset\RR^n$, we define \begin{equation}\Cube^s_\delta(E):=\inf\left\{\sum_1^\infty (\side Q_i)^s:E\subseteq\bigcup_1^\infty Q_i,\text{ $Q_i$ is a cube with $\side Q_i\leq \delta$}\right\}.\end{equation} We call $\Cube^s_\infty$ the \emph{$s$-dimensional cubical content} on $\RR^{n}$.  The \emph{$s$-dimensional cubical measure} on $\RR^{n}$ is defined by $\Cube^s(E):=\lim_{\delta\downarrow 0} \Cube^s_\delta(E)$ for all $E\subset\RR^{n}$.\end{definition}

The quantities $\Cube^s_\delta$ and $\Cube^s$ are outer measures on $\RR^n$ and Borel sets are $\Cube^s$ measurable in the sense of Caratheodory. Moreover, $\Cube^s_\delta$ and $\Cube^s$ share the same null sets. Notably, the contents $\Cube^s_{\delta}$ are \emph{not} Borel measures for any $\delta\in(0,\infty]$ unless $s=n$.

\begin{example} When $s>n$, $\mathcal{Q}^s_\infty=\mathcal{Q}^s=0$ (the zero outer measure), and when $s=n$, $\mathcal{Q}^n_\infty=\mathcal{Q}^n=\mathcal{L}^n$ (the Lebesgue outer measure on $\RR^n$). When $s<n$, $\mathcal{Q}^s_\infty$ and $\mathcal{Q}^s$ are distinct outer measures. For instance, when $s=0$, $\mathcal{Q}^0$ is the counting measure and $$\mathcal{Q}^0_\infty(E) = \left\{\begin{array}{ll}
0, & \text{if $E=\emptyset$}\\
1, & \text{if $E$ is nonempty and bounded}\\
\infty, &\text{if $E$ is unbounded}\end{array}\right..$$ All sets are $\mathcal{Q}^0$ measurable, but the only $\mathcal{Q}^0_\infty$ measurable sets are $\RR^n$ and $\emptyset$.
\end{example}

\begin{principle}[bigger cubes are more efficient than smaller cubes] \label{p:bigger} Let $s=n-\rho<n$. Suppose that a cube $P$ is partitioned into two or more cubes $Q_1,Q_2,\dots,Q_k$, i.e.~ $P=\bigcup_1^k Q_i$ and $Q_i\cap Q_j=\emptyset$ whenever $i\neq j$. While $\vol P = \sum_1^k \vol Q_i$, \begin{equation*} (\side P)^{s} = (\vol P)^{1-\rho/n} < (\vol P)^{1-\rho/n} \sum_1^k \left(\frac{\vol Q_i}{\vol P}\right)^{1-\rho/n} = \sum_1^k (\side Q_i)^{s},\end{equation*} because $t_1,\dots,t_k\in(0,1)$ and $\sum_1^k t_i=1$ implies $\sum_1^k t_i^{1-\epsilon}>1$ for all $\epsilon\in(0,1]$. From this we obtain the following simple, but important principle. When using covers of disjoint cubes to estimate $s$-dimensional cubical contents of sets in $\RR^n$ from above, it is generally more efficient to combine smaller cubes into bigger cubes wherever possible.\end{principle}

\begin{example}\label{example:goes-to-infinity} Suppose that $s<n$ is fixed. Let $m\geq 3$ be an integer. If we partition a cube $P$ of side length 1 into $m^n$ cubes $Q_1,\dots,Q_{m^n}$ of side length $1/m$, then \begin{equation}\label{goes-to-infinity-with-m}\sum_{i=1}^{m^n} (\side Q_i)^s = m^{n-s}\rightarrow\infty\quad \text{as $m\rightarrow \infty$}.\end{equation} Note that even if we deleted one of the cubes from the list, then it is still possible for the sum over the remaining cubes to exceed 1: \begin{equation*}S=\sum_{i=1}^{m^n-1}(\side Q_i)^s = m^{n-s}-m^{-s}>1\end{equation*} unless $n-s$ is sufficiently small depending on $m$. Thus, if $E\subset Q_1\cup\cdots\cup Q_{m^n-1}$, then it might still be more efficient to use $P$ to estimate the $s$-dimensional cubical content of $E$ instead of the smaller cubes $Q_1,\dots,Q_{m^n-1}$.

Now, $S\leq 1$ if and only if $n-s \leq -\log_m(1-m^{-n})$. By concavity, the tangent line of $\log(1+x)$ at $x=0$ lies above the graph of the function. Hence, for all $-1<x<\infty$, $\log(1+x)\leq x$. It follows that for all $0\leq x<1$, \begin{equation}\label{log-convex-2} x\leq -\log(1-x)=\log\left(\frac{1}{1-x}\right)=\log\left(1+\frac{x}{1-x}\right)\leq \frac{x}{1-x}.\end{equation} Thus, we see that a sufficient condition for $S\leq 1$ is for $n-s \leq 1/(m^n\log(m))$, which roughly speaking is smaller than the volume of the ``hole'' (the deleted cube) by a factor of $\log(m)$ that blows up as $m\rightarrow\infty$.\end{example}

\begin{figure}[t]%
\centering
\begin{tikzpicture}[x=0.55cm,y=0.55cm,line join=miter]
  \def\m{7}
  \pgfmathtruncatemacro{\mA}{\m-1}
  \pgfmathtruncatemacro{\mB}{\m-2}

  \foreach \i in {0,...,\mA}
    \foreach \j in {0,...,\mA}
      \fill[gray!30,draw=black] (\i,\j) rectangle ++(1,1);
  \draw[semithick] (0,0) rectangle (\m,\m);

  \begin{scope}[shift={(\m+3,0)}]
    \fill[gray!15,draw=black] (1,0) rectangle (\m,\mA);
    \foreach \i in {0,...,\mA}
      \fill[gray!30,draw=black] (\i,\mA) rectangle ++(1,1);
    \foreach \j in {0,...,\mB}
      \fill[gray!30,draw=black] (0,\j) rectangle ++(1,1);
    \draw[semithick] (0,0) rectangle (\m,\m);
  \end{scope}
\end{tikzpicture}
\caption{Illustrating Examples \ref{example:goes-to-infinity} and \ref{example:goes-to-1} with $n=2$ and $m=7$. For $s\in(n-1,n)$, the mixed subdivision on the right is substantially more efficient than the regular $m$-adic subdivision on the left.} \label{fig:two-divisions}
\end{figure}

\begin{example}\label{example:goes-to-1} Let $s\in(n-1,n)$ be fixed. Let $m\geq 3$ be an integer. If we partition a cube $P$ of side length 1 into one cube $Q_0$ of side length $1-(1/m)$ and $N=m^n-(m-1)^n$ cubes $Q_1,\dots,Q_{N}$ of side length $1/m$ (see Figure \ref{fig:two-divisions}), then by the binomial theorem, \begin{equation}\begin{split} \label{goes-to-1-with-m}
\sum_{i=0}^{N}(\side Q_i)^{s} &= (1-m^{-1})^{s}+m^{n-s}\left(1-(1-m^{-1})^n\right)\\
&= (1-m^{-1})^s+O(m^{n-s-1})\rightarrow 1\quad \text{as }m\rightarrow\infty,\end{split}\end{equation} because $n-s-1<0$ and $O(m^{n-s-1})\leq C m^{n-s-1}$ for some constant $C$ depending only on $n$. Compare \eqref{goes-to-1-with-m} with \eqref{goes-to-infinity-with-m}. This simple observation is one source of our improved estimates for Bourgain's constant $b_n$ compared with the estimates in \cite{15-9s}.

If we omit one of the smaller cubes, then the $s$-dimensional content of the list becomes \begin{equation*}\begin{split}\widetilde S&=\sum_{i=0}^{N-1}(\side Q_i)^s = (1-m^{-1})^{s}+m^{n-s}\left(1-(1-m^{-1})^n-m^{-n}\right)\\
&= m^{\rho}\Big(1-(1-(m-1)^{-\rho})(1-m^{-1})^n-m^{-n}\Big),\quad\text{where }\rho=n-s.\end{split}\end{equation*} Hence $\widetilde S\leq 1$ if and only if \begin{equation}\label{S-tilde-iff}\underbrace{1-m^{-\rho}}_{A} \leq \underbrace{(1-(m-1)^{-\rho})}_{B}\underbrace{(1-m^{-1})^n}_{C}+m^{-n}.\end{equation} To find sufficient conditions on $\rho$ for $\widetilde S\leq 1$, let us bound $A$ from above and $B$ and $C$ from below. Applying \eqref{log-convex-2} with $x=1-m^{-\rho}$, we have \begin{equation*}A=1-m^{-\rho} \leq -\log(1-(1-m^{-\rho}))=\rho\log(m).\end{equation*} From the Taylor series bound $e^{-x}\leq 1-x+(1/2)x^2$ for all $x\geq 0$, it follows that \begin{equation*} B=1-e^{-\rho\log(m-1)} \geq \rho\log(m-1)-\frac{1}{2}\rho^2\log(m-1)^2.\end{equation*} Next, by considering the sign of the derivative of $f(x)=(1+x)^n-(1+nx)$, one may verify that $f(x)$ has a global minimum on $[-1,\infty)$ at $x=0$. This is Bernoulli's inequality: \begin{equation}\label{Bernoulli} (1+x)^n \geq 1+nx\quad\text{for all $x\geq -1$.}\end{equation} Hence $C=(1-m^{-1})^n \geq 1-(n/m)$. Substituting our bounds for $A$, $B$, and $C$ into \eqref{S-tilde-iff} and rearranging, we discover that $\widetilde S\leq 1$ if $\rho=n-s<1$ and $\rho$ satisfies the quadratic inequality \begin{equation}\label{quadratic-inequality} \left[\frac{1}{2}\left(1-\frac{n}{m}\right)\log(m-1)^2\right]\rho^2 +\left[\log\left(1+\frac{1}{m-1}\right)+\frac{n}{m}\log(m-1)\right]\rho\leq m^{-n}.\end{equation} Suppose that $0< \rho \leq m^{-n}/(n+1)$ and recall that $n\geq 1$ and $m\geq 3$. Estimating generously, the left hand side of \eqref{quadratic-inequality} is bounded above by \begin{equation*}\begin{split} \left[\frac{1}{2}\log(m)^2\right]\frac{m^{-2n}}{(n+1)^2} &+\left[\log(3/2) + \frac{n}{m}\log(m)^2\right]\frac{m^{-n}}{n+1} \\
&\leq \left(\frac{1}{2}\log(3/2) + \frac{9}{8}\frac{\log(m)^2}{m}\right)m^{-n} < 0.82 m^{-n},\end{split} \end{equation*} where the reader can verify the final inequality by checking that the expression in round parentheses is maximized over integers $m\geq 3$ when $m=7$. Thus, a sufficient condition for $\widetilde S\leq 1$ is for $n-s \leq m^{-n}/(n+1)$, which is smaller than the volume of the deleted cube by a factor $n+1$ that is independent of $m$.
\end{example}

The previous pair of examples illustrate the following principle.

\begin{principle}[volume of a hole provides a good heuristic for when to use smaller cubes] \label{p:hole} When there a ``hole'' or ``empty space'' $Q$ inside of a cube $P$, then a cover of $P\setminus Q$ with cubes of side length smaller than $P$ can provide a better estimate for the $s$-dimensional cubical content of $P\setminus Q$ than $(\side P)^s$ as long as $n-s$ is (a bit) smaller than $\mathcal{L}^n(Q)/\mathcal{L}^n(P)$. Using uniform covers of side length $t\side P$ requires something like $n-s$ is less than the relative Lebesgue measure divided by $\log(1/t)$. Using covers of mixed sizes can lead to a larger range of valid $n-s$.\end{principle}

Let $B(x,r)$ denote the open ball with center $x\in\RR^n$ and radius $r>0$, respectively.

\begin{lemma}\label{ball-volume} Denote the volume of the unit ball in $\RR^n$ by $\omega_n$, i.e.~ \begin{equation}\omega_n = \frac{\pi^{n/2}}{\Gamma((n+2)/2)},\qquad \Gamma(x)= \int_0^\infty t^{x-1}e^{-t}\,dt\quad (x>0).\end{equation} For all $n\geq 1$,  \begin{equation}\frac{1}{e^{1/(6n+12)}}\cdot \frac{\sqrt{2\pi e}^{n+2}}{2\sqrt{\pi}^3\sqrt{n+2}^{n+1}} \leq \omega_n \leq \frac{\sqrt{2\pi e}^{n+2}}{2\sqrt{\pi}^3\sqrt{n+2}^{n+1}}.\end{equation}
\end{lemma}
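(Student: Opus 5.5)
The bounds follow from Stirling's formula with explicit error terms, applied to $\Gamma(x)$ at $x=(n+2)/2$. I would use the classical two-sided Stirling estimate, valid for every real $x>0$:
\begin{equation*}
\sqrt{2\pi}\,x^{x-1/2}e^{-x}\;\leq\;\Gamma(x)\;\leq\;\sqrt{2\pi}\,x^{x-1/2}e^{-x}e^{1/(12x)}.
\end{equation*}
This comes from Binet's formula $\log\Gamma(x)=(x-\tfrac12)\log x-x+\tfrac12\log(2\pi)+\mu(x)$, where the remainder $\mu(x)=\int_0^\infty\bigl(\tfrac{1}{e^t-1}-\tfrac1t+\tfrac12\bigr)\tfrac{e^{-xt}}{t}\,dt$ satisfies $0<\mu(x)<1/(12x)$. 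The upper bound on $\mu$ uses that the bracketed integrand lies between $0$ and $t/12$.

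Set $x=(n+2)/2$, so that $1/(12x)=1/(6n+12)$, which is exactly the exponent appearing in the lemma. Then compute the main term. We have $x^{x-1/2}=\bigl(\tfrac{n+2}{2}\bigr)^{(n+1)/2}=\sqrt{n+2}^{\,n+1}/\sqrt2^{\,n+1}$ and $e^{-x}=1/\sqrt{e}^{\,n+2}$. Hence
\begin{equation*}
\frac{\pi^{n/2}}{\sqrt{2\pi}\,x^{x-1/2}e^{-x}}=\frac{\sqrt\pi^{\,n}\,\sqrt2^{\,n+1}\,\sqrt e^{\,n+2}}{\sqrt{2\pi}\,\sqrt{n+2}^{\,n+1}}.
\end{equation*}
Rewriting $\sqrt\pi^{\,n}\sqrt2^{\,n}\sqrt e^{\,n+2}=\sqrt{2\pi e}^{\,n+2}/(2\pi)$ and simplifying gives
\begin{equation*}
\frac{\sqrt{2\pi e}^{\,n+2}}{2\sqrt\pi^{\,3}\sqrt{n+2}^{\,n+1}}.
\end{equation*}
The remaining step is to check this bookkeeping of the powers of $2$ and $\pi$: the factor $\sqrt2^{\,n+1}/(\sqrt{2\pi}\cdot 2\pi)$ should collapse to $\sqrt2^{\,n}/(2\sqrt\pi^{\,3})$, and indeed $\sqrt2/(\sqrt2\sqrt\pi\cdot2\pi)=1/(2\pi^{3/2})$.

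Since $\omega_n=\pi^{n/2}/\Gamma(x)$, the lower bound on $\Gamma$ gives the upper bound on $\omega_n$. Likewise, the upper bound on $\Gamma$, which carries the factor $e^{1/(6n+12)}$, gives the lower bound on $\omega_n$ with the factor $e^{-1/(6n+12)}$.

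The main point needing care is justifying the explicit Stirling bounds for non-integer $x$, since $n$ odd gives a half-integer. This is standard via Binet's first formula, or via the Robbins-type monotonicity argument extended to real $x$. I would cite the bound for all real $x>0$ rather than reprove it. Everything else is algebra.
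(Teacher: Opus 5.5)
Your proposal is correct and matches the paper's proof: apply the two-sided Stirling bound $\sqrt{2\pi}\,x^{x-1/2}e^{-x}\leq\Gamma(x)\leq\sqrt{2\pi}\,x^{x-1/2}e^{-x}e^{1/(12x)}$, valid for all real $x>0$, at $x=(n+2)/2$, citing it rather than reproving it. Your algebra is correct, and your Binet-formula remark properly justifies the half-integer case (odd $n$).
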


\begin{proof} The formulas for the volume of an $n$-ball are well-known. Apply the Stirling's inequalities for the gamma function, see e.g.~\cite{Artin-Gamma} or \cite{Jameson-Stirling}, with $x=(n+2)/2$: \begin{equation} \frac{\sqrt{2\pi}}{\sqrt{x}} \left(\frac{x}{e}\right)^x\leq \Gamma(x) \leq \frac{\sqrt{2\pi}}{\sqrt{x}} \left(\frac{x}{e}\right)^x e^{1/(12x)}\quad\text{for all $x>0$}.\qedhere\end{equation}\end{proof}

\begin{figure}[t]
    \centering
    \small
    \begin{tabular}{c c c c @{\quad} !{\color{sepcol}\vrule} @{\quad} c c c c}
        \toprule
        {$n$} & {$\operatorname{lower}(n)$} & {$\omega_n$} & {$\operatorname{upper}(n)$} &
        {$n$} & {$\operatorname{lower}(n)$} & {$\omega_n$} & {$\operatorname{upper}(n)$} \\
        \midrule
        \rowcolor{rowgray}
         1 & 1.99851\ldots & 2.00000\ldots & 2.11268\ldots &  6 & 5.16749\ldots & 5.16771\ldots & 5.27627\ldots \\
         2 & 3.14056\ldots & 3.14159\ldots & 3.27419\ldots &  7 & 4.72462\ldots & 4.72476\ldots & 4.81293\ldots\\
        \rowcolor{rowgray}
         3 & 4.18807\ldots & 4.18879\ldots & 4.33003\ldots &  8 & 4.05862\ldots & 4.05871\ldots & 4.12683\ldots\\
         4 & 4.93430\ldots & 4.93480\ldots & 5.07329\ldots & 9 & 3.29845\ldots & 3.29850\ldots & 3.34881\ldots\\
        \rowcolor{rowgray}
         5 & 5.26345\ldots & 5.26378\ldots & 5.39028\ldots & 10 & 2.55013\ldots & 2.55016\ldots & 2.58579\ldots \\
        \bottomrule
    \end{tabular}
    \normalsize
     \caption{Exact values vs.~bounds for $\omega_n$ in Lemma \ref{ball-volume} for $1\leq n\leq 10$.}
    \label{tab:omega_bounds}
\end{figure}

Table \eqref{tab:omega_bounds} shows  that the bounds in Lemma~\ref{ball-volume} rapidly approximate $\omega_n$, particularly the lower bound with its exponential correction. Note that $\omega_n$ is maximized when $n=5$.

\begin{example}\label{example:15-175} Let $0<s<n$. Let's estimate the $s$-dimensional cubical content of the open unit ball $B(0,1)$. When $s>n-1$, it is easy to verify that $\mathcal{Q}_\infty^s(\partial B(0,1))=0$ and $\mathcal{Q}_\infty^s(\overline{B(0,1)})=\mathcal{Q}_\infty^s(B(0,1))$. Circumscribing $B(0,1)$ with a single cube of side length $2=\diam B(0,1)$, we obtain an initial bound \begin{equation}\label{2-to-s-bound}\Cube^s_\infty(B(0,1)) \leq 2^s.\end{equation} Cover $B(0,1)$ with pairwise disjoint cubes of side length $t$ that intersect $B(0,1)$; e.g., take cubes from some partition of $\RR^n$ by half-open cubes of side length $t$. Any such cube has diameter $\sqrt{n}t$ and is contained inside $\widehat B=B(0,1+\sqrt{n}t)$. Therefore, the number of cubes $N_j$ of side length $t$ needed to cover $B(0,1)$ is no more than $$\frac{\text{volume of $\widehat{B}$}}{\text{volume of a cube}}=\frac{\omega_n(1+\sqrt{n}t)^n}{t^n}.$$
Covering $B(0,1)$ with said cubes, we obtain \begin{equation*}\Cube^s_\infty(B(0,1)) \leq \omega_n(1+\sqrt{n}t)^nt^{-(n-s)}.\end{equation*} By a simple calculus exercise, the latter expression is minimized when $t=(n-s)/(s\sqrt{n})$: \begin{equation}\Cube^s_\infty(B(0,1)) \leq \omega_n\frac{n^n}{s^s}\left(\frac{\sqrt{n}}{n-s}\right)^{n-s}.\end{equation} This is a reasonable discovery insofar as the upper bound on $\Cube^s_\infty(B(0,1))$ tends to $\omega_n=\Cube^n(B(0,1))=\Cube^n_\infty(B(0,1))$ as $s\uparrow n$. However, we note that this bound is not sharp by Principle \ref{p:bigger}, because we could combine many of the little cubes of side length $t$ inside of $B(0,1)$ into bigger cubes. To the authors' knowledge, the exact value of the content $\omega_{n,s}:=\mathcal{Q}^s_\infty(B(0,1))$ is unknown and computing it is an \emph{open problem}.

\begin{figure}[t]

\caption{Illustrating Example \ref{example:15-175} with $n=2$ and $s=1.75$. As $s\uparrow n$, uniform covers require increasingly small cubes, while covers using cubes of several sizes can be more efficient.}
\label{fig:15-175}
\end{figure}

For instance, when $n=2$ and $s=1.75$, we can cover $B(0,1)$ with 256 squares of side length $t=0.12$ (see Figure \ref{fig:15-175}); this cover leads to the bound $\Cube^{1.75}_\infty(B(0,1)) \leq 6.2633...$, which is substantially worse than the bound $\Cube^{1.75}_\infty(B(0,1))\leq 2^{1.75}=3.3635...$ from \eqref{2-to-s-bound}. However, we can beat the $2^{1.75}$ bound by using several side lengths: cover $B(0,1)$ with one cube of side length $1.9$, 36 cubes of side length $0.05$ and 24 cubes of side length $0.03$ to obtain $\Cube^{1.75}_\infty(B(0,1))\leq 3.3170...$, greater than 1\% improvement over \eqref{2-to-s-bound}.\end{example}

\begin{example}\label{example:outer-cube} Let $s\in(0,n)$, close to $n$, and let $B=B(x,r)$ be any (open) ball contained in $[0,1]^n$. We wish to estimate the $s$-dimensional cubical content of the set $E=[0,1]^n\setminus B$ in terms of $n$, $s$, and $r$, but independent of the location of $B$ within the $[0,1]^n$. In contrast with the previous example, we add a further stipulation: we want to estimate $\Cube^s_\infty(E)$ using only pairwise disjoint cubes that are contained in $[0,1]^n$.

\begin{figure}[t]
\centering

\caption{Illustrating Example \ref{example:outer-cube} with $n=2$ and $h=25$. The blue squares lie inside $B(x,r)$, while the orange squares intersect the complement of $B(x,r)$ and are disjoint from the interior of $B(x,t)$.} \label{fig:exterior-circle}
\end{figure}

Let $h\geq 2$ be a (large) integer. Partition $[0,1]^n$ into a family $\mathcal{Q}_h$ of $h^n$ pairwise disjoint cubes of side length $1/h$. Let $\mathcal{R}_h$ denote the subfamily of all cubes in $\mathcal{Q}_h$ that intersect $E$. Given $0<t<r$ (close to $r$), assume that $h$ is sufficiently large so that $r-\sqrt{n}/h\geq t$, i.e.~assume that $h\geq \sqrt{n}/(r(1-t/r))$. We remark that $h$ tends to infinity as the radius of $B$ goes to $0$ and as well as when $t$ approaches $r$. No cube in $\mathcal{R}_h$ intersects the interior of $B(x,t)$. Thus, $$\#\mathcal{R}_h \leq \frac{\text{volume of $[0,1]^n$ $-$ volume of $B(x,t)$}}{\text{volume of a cube}}=\frac{1-\omega_nt^n}{(1/h)^n}.$$ Because the cubes in $\mathcal{R}_h$ cover $E$ and have side length $1/h$, we obtain \begin{equation}\label{cube-minus-ball} \Cube^s_\infty(E) \leq \sum_{R\in\mathcal{R}_h}(\side R)^s \leq h^{n-s}\left(1-\omega_n t^n\right).\end{equation} In order for \eqref{cube-minus-ball} to improve on the trivial bound $\Cube^s_\infty(E)\leq \Cube^s_\infty([0,1]^n)\leq 1$, we need $s$ to be sufficiently close to $n$ so that $(n-s)\log(h) <-\log\left(1-\omega_n t^n\right)$. In particular, it suffices to take $n-s<\omega_n t^n/(\log h)$,
which is consistent with Principle~\ref{p:hole}.\end{example}

\section{Bourgain-type dimension lemma with cubes in general position}\label{sec:gmt}

For $s\in[0,\infty)$ and $E\subset\RR^n$, let $\Haus^s(E)$ denote the $s$-dimensional Hausdorff measure, defined as in Definition \ref{def:cubical-Hausdorff} but allowing covers by arbitrary sets $E_i\subset\RR^n$ and replacing $(\side Q_i)^s$ by $(\diam E_i)^s$. Since $\Cube^s$ and $\Haus^s$ are comparable up to dimensional constants,
$$
\dim_H E:=\inf\{s\geq 0:\Haus^s(E)<\infty\}
=\inf\{s\geq 0:\Cube^s(E)<\infty\}.
$$
For a Borel measure $\mu$ on $\RR^n$, its \emph{upper Hausdorff dimension} is
$$
\overline{\dim}_H\,\mu
:=\inf\{\dim_H E:\mu(\RR^n\setminus E)=0,\ E\text{ Borel}\}.
$$
We will use the countable stability of both notions: $\dim_H\bigcup_iE_i=\sup_i\dim_HE_i$, and if $\RR^n=\bigcup_iE_i$ with each $E_i$ Borel, then $\overline{\dim}_H,\mu=\sup_i\overline{\dim}_H(\mu\res E_i)$.

The following dimension lemma originates in Bourgain's work on the dimension of harmonic measure \cite{Bourgain}, where the relevant estimates appear within the proof in a form adapated to its specific setting. An explicit formulation and proof appear in the parabolic setting in \cite[Theorem 2.10]{dim-caloric} and a Euclidean version appears in \cite{15-9s}. The formulation below allows pairwise disjoint covering cubes in general position and arbitrary $\alpha\in(0,1)$. We also refine the proof to eliminate a loss in the type~(II) iteration, thereby recovering the factor $1-\alpha$ in the definition of $\lambda_\alpha$.

\begin{theorem}[Bourgain-type dimension lemma with cubes in general position]\label{dim-lemma} Let $\mu$ be a Borel regular outer measure on $\RR^{n}$ that is finite on bounded sets, and let $E\subset\RR^{n}$ be a Borel set with $\mu(\RR^{n}\setminus E)=0$. Suppose that there exist parameters $\alpha\in(0,1)$, $\rho\in(0,n)$, $\beta\in(0,1)$, $\tau\in(1,\infty)$, and $\gamma\in(0,1)$ such that for every cube $P$ in $\RR^n$ with $\side P\leq 1$, at least one (possibly both) of the following conditions holds: \begin{enumerate}
\item[(I)] there is a cover of $E\cap P$ by a finite or countable list of pairwise disjoint cubes $Q_1, Q_2,\dots$ contained in $P$ with $\side Q_i\leq \beta \side P$ such that $$\sum_{i} (\side Q_i)^{n-\rho}\leq (\side P)^{n-\rho};$$
\item[(II)] there is a cover of $E\cap P$ by a finite or countable list of pairwise disjoint cubes $R_1,R_2,\dots$ contained in $P$ with $\side R_i\leq \beta\side P$ such that $$\sum_i (\side R_i)^{n-\rho}\leq \tau\,(\side P)^{n-\rho}\quad\text{and}\quad\sum_i \mu(R_i)^{\alpha}(\vol R_i)^{1-\alpha} \leq \gamma\, \mu(P)^{\alpha}(\vol P)^{1-\alpha}.$$
\end{enumerate} Then the upper Hausdorff dimension of $\mu$ is at most \begin{equation}\label{lambda-t-general} n-\left(\frac{\lambda_\alpha}{1+\lambda_\alpha}\right)\rho,\quad \text{where }\lambda_\alpha:=\frac{-\log_\tau(\gamma)}{1-\alpha}\in(0,\infty).\end{equation} One may, if desired, restrict throughout to cubes that are half-open of some fixed type.
\end{theorem}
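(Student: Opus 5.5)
We will build a stopping-time tree of cubes. Apply the hypothesis to an arbitrary starting cube and then recursively to each cube in the resulting cover, obtaining a tree $\mathcal{T}$. Each node is a cube $P$ with children $\Child(P)$ given by a type (I) or type (II) cover; we choose type (I) whenever it is available. Since $\side Q\le\beta\side P$, each generation shrinks by a factor of at least $\beta$. Pairwise disjointness and containment in the parent make $\mathcal{T}$ a nested family.

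By countable stability it suffices to show that $\overline{\dim}_H(\mu\res(E\cap P_0))\le n-\kappa\rho$ for every cube $P_0$ of side at most $1$, where $\kappa=\lambda_\alpha/(1+\lambda_\alpha)$. Set $s=n-\kappa\rho$ and, for $x\in E\cap P_0$, let $N_k(x)$ be the number of type (II) steps among the first $k$ ancestors of $x$.

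We track two functionals along each branch. The first is $A(P)=(\side P)^{n-\rho}$, which is multiplied by at most $1$ over the children in a type (I) step and by at most $\tau$ in a type (II) step, in summed form. The second is $\Phi(P)=\mu(P)^\alpha(\vol P)^{1-\alpha}$, which is multiplied by at most $\gamma$ in a type (II) step. Iterating, the generation-$k$ cubes $\mathcal{G}_k$ satisfy
\[\sum_{Q\in\mathcal{G}_k,\ N(Q)=j}(\side Q)^{n-\rho}\le\tau^{j}(\side P_0)^{n-\rho}.\]
The $\Phi$ bound does not iterate cleanly through type (I) steps. There I would use Hölder: for disjoint children of $P$,
\[\sum\mu(Q)^\alpha(\vol Q)^{1-\alpha}\le\Big(\sum\mu(Q)\Big)^\alpha\Big(\sum\vol Q\Big)^{1-\alpha}\le\Phi(P),\]
so $\Phi$ is non-increasing in type (I) steps. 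This gives $\sum_{N(Q)=j}\Phi(Q)\le\gamma^{j}\Phi(P_0)$. This is the refinement that keeps the factor $1-\alpha$.

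Next we split points by their type (II) frequency. Let $F_\theta$ be the set of points with $N_k(x)\ge\theta\cdot(\text{scale count})$ infinitely often, and $G_\theta$ the remaining points. The frequency should be measured in the scale parameter $\log(1/\side Q)$ rather than in $k$; I would index by $\ell=\log_\tau$ of the ratio $(\side P_0/\side Q)^{\rho}$.

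For $G_\theta$ (few type (II) steps) we cover by the generation cubes. The $(n-\rho)$-sum is at most $\tau^{j}$ with $j$ small relative to $\log(1/\side Q)$. Passing from exponent $n-\rho$ to $s$ multiplies each term by $(\side Q)^{(1-\kappa)\rho}$, and this factor beats $\tau^{j}$ when $j\log\tau<(1-\kappa)\rho\log(1/\side Q)$. Hence $\Cube^s$ of $G_\theta$ is finite at arbitrarily fine scales, giving $\dim_H G_\theta\le s$.

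For $F_\theta$ (many type (II) steps) we show $\mu(F_\theta)=0$. Among cubes with $N(Q)=j$, the bound on $\sum\Phi(Q)$ says $\mu(Q)\le \gamma^{j/\alpha}$ times volume factors for most cubes. Pick the subfamily with $\mu(Q)\ge(\side Q)^{s}$, or more simply estimate $\mu$ directly. By Hölder again,
\[\mu(F)\le\sum\mu(Q)=\sum\Phi(Q)^{1/\alpha}\cdots,\]
which is not directly summable, so instead I would use the $s$-density split. Points in $F_\theta$ lying in cubes where $\mu(Q)\le(\side Q)^s$ form a set that is $\mu$-negligible in dimension terms. On cubes where $\mu(Q)>(\side Q)^s$ we get $\Phi(Q)\ge\mu(Q)(\side Q)^{-(n-s)(1-\alpha)/\alpha\cdot\alpha}$, so
\[\mu\Big(\bigcup Q\Big)\le\gamma^{j}(\side Q)^{-(1-\alpha)\kappa\rho}.\]
Matching exponents in the two cases forces the balance $\lambda_\alpha\,\theta(1-\kappa)=\kappa$ at optimal $\theta$, which yields $\kappa=\lambda_\alpha/(1+\lambda_\alpha)$. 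Then $\mu\res E$ is carried by a set of dimension at most $s$, using the standard density lemma that $\{x:\limsup\mu(Q)/(\side Q)^s=\infty\}$ has $\Haus^s$-measure zero, applied in reverse.

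The main obstacle is the bookkeeping in the last step. Cubes have variable shrink ratios, so the frequency must be measured in logarithmic scale rather than tree depth, and the thresholds must be chosen so that both the $\Cube^s$ sum and the $\mu$ sum converge with the exact constant $\lambda_\alpha/(1+\lambda_\alpha)$. I would first verify the exponent arithmetic in a two-type model, then handle general position by noting that only nestedness and disjointness were used. The half-open remark follows because the hypothesis is only ever applied to the cubes produced.
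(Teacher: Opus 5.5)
Your tree, the two tree-sum bounds, and the H\"older step that makes $\Phi$ non-increasing at type (I) cubes are the right ingredients; the paper uses the same H\"older inequality. Your $G_\theta$ part also works, provided you write $G_\theta=\bigcup_K G_{\theta,K}$ and keep a strict margin. With $\side P_0=1$, a light cube with $N(Q)=j$ has $(\side Q)^{(1-\kappa)\rho}<\tau^{-j(1-\kappa)/\theta}$. So the generation-$k$ sum is at most $\sum_{j\le k}\tau^{j(1-(1-\kappa)/\theta)}$, which is bounded uniformly in $k$ only if $\theta<1-\kappa$. Your stated threshold $j\log\tau<(1-\kappa)\rho\log(1/\side Q)$ only gives the bound $k+1$.

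\textbf{The gap: the many-type-(II) set.}

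\emph{The claim $\mu(F_\theta)=0$ is false.} Take $\mu=\delta_0$, $E=\RR^n$, and $m$-adic subdivisions.
\begin{itemize}
\item Condition (I) always fails: any cover of $P$ by cubes of side at most $\beta\side P$ has $(n-\rho)$-sum at least $\beta^{-\rho}(\side P)^{n-\rho}$.
\item Condition (II) holds with $\beta=1/m$, $\tau=m^\rho$, $\gamma=m^{-n(1-\alpha)}$.
\end{itemize}
So every step is type (II), $N=\ell$ along every branch, and $F_\theta$ carries all of $\mu$.

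\emph{Your displayed estimate is attached to the wrong cubes.} If $\mu(Q)>(\side Q)^s$, then $\Phi(Q)<\mu(Q)(\side Q)^{(n-s)(1-\alpha)}$, which bounds $\mu(Q)$ from below. The useful inequality $\mu(Q)\le\Phi(Q)(\side Q)^{-(n-s)(1-\alpha)}$ holds exactly when $\mu(Q)\le(\side Q)^s$.

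\textbf{The repair: swap the roles of dense and sparse cubes.}

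\emph{Dense cubes.} Let $D$ be the set of points lying in cubes with $\mu(Q)>(\side Q)^s$ at infinitely many generations. Tree cubes are nested or disjoint, so the maximal such cubes of generation $\ge k$ are disjoint, and $\sum(\side Q)^s<\sum\mu(Q)\le\mu(P_0)$. Hence $\Cube^s(D)\le\mu(P_0)$. The set $D$ may carry positive mass, but it has dimension at most $s$. No density theorem for centered balls is needed.

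\emph{Sparse cubes.} Points outside $D\cup G_\theta$ lie only in sparse cubes from some generation $K$ on, and lie in heavy cubes ($N(Q)\ge\theta\ell(Q)$) infinitely often. A heavy cube of generation $k$ has $N(Q)\ge\theta\rho k\log_\tau(1/\beta)$. So for each $K$ and $J$, these points are covered by a disjoint family of sparse heavy cubes of generation $\ge K$ with $N(Q)\ge J$. The $\Phi$ tree-sum bound holds for any disjoint family, not just one generation. Normalizing $\mu(P_0)=1$, this gives
\[
\mu\Big(\bigcup Q\Big)\le\sum_{j\ge J}\tau^{(1-\alpha)\kappa j/\theta}\gamma^{j}=\sum_{j\ge J}\tau^{-(1-\alpha)j(\lambda_\alpha-\kappa/\theta)}\longrightarrow0
\]
whenever $\theta>\kappa/\lambda_\alpha$.

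\emph{Choice of parameters.} The constraints $\kappa/\lambda_\alpha<\theta<1-\kappa$ are compatible only for $\kappa<\lambda_\alpha/(1+\lambda_\alpha)$. So you must work strictly below the critical exponent and let $\kappa$ increase at the end.

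\textbf{Comparison with the paper.} With these changes, your branchwise frequency argument is a valid alternative to the paper's proof. The paper instead:
\begin{itemize}
\item stops the tree at side $\le\delta$;
\item ties the number of type (II) ancestors to $\lceil\log_\tau\delta^{-\rho\epsilon}\rceil$;
\item uses non-terminality, $\side Q>\delta$, as the lower bound on side length in the sparse estimate;
\item removes the $\delta$-dependent exceptional sets by Borel--Cantelli.
\end{itemize}
The paper's route avoids your eventually/infinitely-often bookkeeping. Yours builds the carrying set from a single infinite tree.
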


\begin{proof}
We may assume that $\mu(E)>0$, otherwise $\mu$ is just the zero measure. Although the theorem permits cubes of different types (for later applications), the proof uses only their nesting, pairwise disjointness, and side lengths, not the assignment of cube boundaries. Thus, if the hypotheses are known only for half-open cubes of one fixed type, the same proof applies with the initial unit cube chosen to have that type. By countable stability, it suffices to restrict $\mu$ to a unit cube $P_0$ of positive measure. Replacing $E$ by $E\cap P_0$ and $\mu$ by $\mu\res P_0$, and then normalizing $\mu$, we may assume that $E\subset P_0$, $\side P_0=1$, and $\mu(P_0)=1$. Conditions (I) and (II) persist under these reductions.

Fix $0<\epsilon<1$ and $0<\sigma<\lambda_\alpha\rho\epsilon$. We will prove that
\[
\overline{\dim}_H\,\mu\leq \max\{n-\rho(1-\epsilon),n-\sigma\}.
\]
This suffices, because for $\epsilon=1/(1+\lambda_\alpha)$, we have $\rho(1-\epsilon)=\lambda_\alpha\rho\epsilon=\lambda_\alpha\rho/(1+\lambda_\alpha),$ and $\sigma$ may be taken arbitrarily close to $\lambda_\alpha\rho\epsilon$ from below. Fix any $0<\delta<1$ sufficiently small such that $\log_\tau(\delta^{-\rho\epsilon})\geq 2$ and let $s=\lceil \log_\tau(\delta^{-\rho\epsilon})\rceil$. Since $s-1<\log_\tau(\delta^{-\rho\epsilon})\leq s$,
\begin{equation}\label{t-power-decay}
\tau^{s-1} < \delta^{-\rho\epsilon}\leq \tau^s.
\end{equation}

We now inductively build a tree $\mathcal{T}=\bigcup_{l=0}^{L} \mathcal{T}_l$ of finite depth $L$. Each set in $\mathcal{T}$ is a cube contained in $P_0$ and the cubes in any given level $\mathcal{T}_l$ are pairwise disjoint. Each cube $Q\in\mathcal{T}_{l+1}$ is contained in a unique cube $P\in\mathcal{T}_l$ and satisfies $\side Q\leq \beta\side P$, but in general there is no lower bound on $\side Q$ in terms of $\side P$. For any cube $P\in\mathcal{T}_l$, we let $\Child(P)=\{Q\in\mathcal{T}_{l+1}:Q\subset P\}$. We call $P\in\mathcal{T}$ \emph{terminal} if $\side P\leq\delta$ or $E\cap P=\emptyset$. Terminal cubes will have no children, while every non-terminal cube will have at least one child. The depth $L$ is finite, because $\side Q\leq\beta\side P$ forces the side lengths along any chain to decay geometrically until the terminal condition $\side P\leq\delta$ is reached.

\emph{Base Case.} At the top level, put $\mathcal{T}_0=\{P_0\}$.

\emph{Induction Step.} Suppose that $\mathcal{T}_l\neq\emptyset$ for some $l\geq 0$ and let $P\in\mathcal{T}_l$. If $P$ is terminal, we add no children. Otherwise, we shall include a nonempty, finite or countable family of pairwise disjoint subcubes of $P$ as sets in $\mathcal{T}_{l+1}$. On the one hand, if $P$ satisfies condition (I), then we say that $P$ is \emph{type 1}. Choose pairwise disjoint cubes $Q_1,Q_2,\dots$ contained in $P$ with $\side Q_i\leq\beta\side P$ that cover $E\cap P$ and have
\begin{equation}\label{e:type1}
\sum_i(\side Q_i)^{n-\rho}\leq(\side P)^{n-\rho}.
\end{equation}
In this case, we include the cube $Q_i\in\mathcal{T}_{l+1}$ for each $i$, so that $\Child(P)=\{Q_1,Q_2,\dots\}$. On the other hand, if $P$ fails condition (I), then $P$ satisfies condition (II) and we say that $P$ is \emph{type 2}. Choose pairwise disjoint cubes $R_1,R_2,\dots$ contained in $P$ with $\side R_i\leq\beta\side P$ that cover $E\cap P$ and satisfy
\begin{equation}\label{e:type2}
\sum_i(\side R_i)^{n-\rho}\leq\tau\,(\side P)^{n-\rho},\quad
\sum_i\mu(R_i)^\alpha(\vol R_i)^{1-\alpha}
\leq\gamma\,\mu(P)^\alpha(\vol P)^{1-\alpha}.
\end{equation}
In this case, we include each $R_i$ in $\mathcal{T}_{l+1}$, so that $\Child(P)=\{R_1,R_2,\dots\}$. Repeating this procedure for each $P\in\mathcal{T}_l$ completes the definition of $\mathcal{T}_{l+1}$.

\emph{Goal.} Write $\eta:=\max\{n-\rho(1-\epsilon),n-\sigma\}$. We aim to find a Borel set $F_\delta\subset E$ with
\begin{equation}\label{M-goal}
\lim_{\delta\downarrow 0}\mu(\RR^{n}\setminus F_{\delta})
=\lim_{\delta\downarrow 0}\mu(E\setminus F_\delta)=0
\quad\text{and}\quad
\Cube^{\eta}_{\zeta(\delta)}(F_\delta)\leq 2<\infty,
\end{equation}
where $\zeta(\delta)\rightarrow0$ as $\delta\rightarrow0$ is a scale to be specified below.

Let $\mathcal{T}^*$ denote the terminal cubes in $\mathcal{T}$ that intersect $E$. Every $Q\in\mathcal{T}^*$ satisfies $\side Q\leq\delta$. Since $\side P_0=1>\delta$ and $\mu(E)=1$, the initial cube $P_0$ is not terminal. We now use terminal cubes to identify two collections $\mathcal{E},\mathcal{F}\subset\mathcal{T}$ with $\mathcal{E}\cap\mathcal{F}=\emptyset$, as well as a subfamily $\mathcal{G}\subset\mathcal{F}$. We say that $Q\in\mathcal{T}^*\cap\mathcal{T}_l$ is \emph{efficient for $\Cube^{n-\rho}$} and include $Q\in\mathcal{E}$ if the chain $P_0,\dots,P_{l-1}$ of ancestors of $Q$ contains fewer than $s$ type 2 cubes. Otherwise, there is a unique ancestor $P$ of $Q$ that is the $s$-th type 2 cube in the chain, and we include $P\in\mathcal{F}$. By construction, $$E\subset\bigcup\mathcal{T}^*\subset\bigcup\mathcal{E}\cup\bigcup\mathcal{F}.$$ Furthermore, $\side Q\leq\delta$ for all $Q\in\mathcal{E}$ and $\delta<\side Q\leq\beta^{s-1}$ for all $Q\in\mathcal{F}$, since cubes in $\mathcal{E}$ belong to $\mathcal{T}^*$ and cubes in $\mathcal{F}$ are not terminal and have at least $s-1$ ancestors. Thus, $\mathcal{E}\cup\mathcal{F}$ covers $E$ by cubes of side length at most $\zeta(\delta):=\max\{\delta,\beta^{s-1}\}$. With $\epsilon$ fixed, as $\delta\rightarrow0$ we have $s\rightarrow\infty$ and $\zeta(\delta)\rightarrow0$. Finally, let $\mathcal{G}$ consist of those $Q\in\mathcal{F}$ satisfying
\[
(\side Q)^{n-\sigma}\leq\mu(Q)
\]
and put $F_\delta=E\cap\bigcup(\mathcal{E}\cup\mathcal{G})$. Since $E$ is Borel, the set $F_\delta$ is Borel.

We now make estimates towards \eqref{M-goal}. Both of the sums below run over cubes lying at different levels of $\mathcal{T}$ and are controlled by the following observation. For $Q\in\mathcal{T}$, let $t(Q)$ denote the number of ancestors of $Q$ in $\mathcal{T}$ of type 2. Suppose that a number $w(Q)\geq0$ is assigned to each $Q\in\mathcal{T}$ and $\kappa>0$ is a constant such that
\begin{equation}\label{w-hypothesis}
\sum_{Q\in\Child(P)}w(Q)\leq w(P)\text{ if $P$ is type 1},\quad
\sum_{Q\in\Child(P)}w(Q)\leq\kappa w(P)\text{ if $P$ is type 2}.
\end{equation}
If $\mathcal{A}\subset\mathcal{T}$ is a pairwise disjoint family of cubes, then
\begin{equation}\label{tree-sum}
\sum_{Q\in\mathcal{A}}w(Q)
\leq\Big(\sup_{Q\in\mathcal{A}}\kappa^{\,t(Q)}\Big)w(P_0).
\end{equation}
Indeed, put $W(P)=\sum\{w(Q):Q\in\mathcal{A},\,Q\subseteq P\}$ and $M(P)=\sup\{\kappa^{\,t(Q)-t(P)}:Q\in\mathcal{A},\,Q\subseteq P\}$. A downward induction gives $W(P)\leq M(P)w(P)$: this is immediate if no cube of $\mathcal{A}$ lies in $P$ or if $P\in\mathcal{A}$, and otherwise follows by summing over $\Child(P)$ and applying \eqref{w-hypothesis}. Taking $P=P_0$, for which $t(P_0)=0$, gives \eqref{tree-sum}.

Assign $w(Q)=(\side Q)^{n-\rho}$ and $\kappa=\tau$, so that \eqref{w-hypothesis} is precisely \eqref{e:type1} together with the first inequality in \eqref{e:type2}. Cubes in $\mathcal{E}$ are terminal, hence pairwise disjoint, and $t(Q)\leq s-1$ for each of them. Thus, by \eqref{tree-sum} and \eqref{t-power-decay},
\[
\sum_{Q\in\mathcal{E}}(\side Q)^{n-\rho}
\leq\tau^{s-1}\leq\delta^{-\rho\epsilon}.
\]
On the one hand, since $\side Q\leq\delta<1$ for all $Q\in\mathcal{E}$,
\[
\Cube^{\eta}_{\delta}(E\cap\bigcup\mathcal{E})
\leq\sum_{Q\in\mathcal{E}}(\side Q)^{n-\rho(1-\epsilon)}
\leq\delta^{\rho\epsilon}\sum_{Q\in\mathcal{E}}(\side Q)^{n-\rho}
\leq1.
\]
On the other hand, since $\side Q\leq\beta^{s-1}<1$ for all $Q\in\mathcal{G}$,
\[
\Cube^{\eta}_{\beta^{s-1}}(E\cap\bigcup\mathcal{G})
\leq\sum_{Q\in\mathcal{G}}(\side Q)^{n-\sigma}
\leq\sum_{Q\in\mathcal{G}}\mu(Q)\leq1,
\]
because cubes in $\mathcal{G}$ are pairwise disjoint subsets of $P_0$, satisfy the defining inequality for $\mathcal{G}$, and $\mu(P_0)=1$. Thus,
\[
\Cube^{\eta}_{\zeta(\delta)}(F_\delta)
\leq\Cube^{\eta}_{\delta}(E\cap\bigcup\mathcal{E})
+\Cube^{\eta}_{\beta^{s-1}}(E\cap\bigcup\mathcal{G})
\leq2<\infty.
\]
To verify \eqref{M-goal}, it remains to estimate $\mu(E\setminus F_\delta)$ in terms of $\delta$.

For every $Q\in\mathcal{F}\setminus\mathcal{G}$, we have
\begin{equation}\label{vol-decomp}
\vol Q=(\side Q)^\sigma(\side Q)^{n-\sigma}>\delta^\sigma\mu(Q).
\end{equation}
Furthermore, by H\"older's inequality with exponents $1/\alpha$ and $1/(1-\alpha)$, for all non-terminal cubes $P\in\mathcal{T}$,
\begin{equation}\begin{split}\label{cauchy-schwarz}
\sum_{Q\in\Child(P)}\mu(Q)^\alpha(\vol Q)^{1-\alpha}
&\leq\bigg(\sum_{Q\in\Child(P)}\mu(Q)\bigg)^\alpha
\bigg(\sum_{Q\in\Child(P)}\vol Q\bigg)^{1-\alpha}\\
&\leq\mu(P)^\alpha(\vol P)^{1-\alpha},
\end{split}\end{equation}
because the children of a cube are pairwise disjoint subcubes. Now assign $w(Q)=\mu(Q)^\alpha(\vol Q)^{1-\alpha}$ and $\kappa=\gamma$, so that \eqref{w-hypothesis} is \eqref{cauchy-schwarz} together with the second inequality in \eqref{e:type2}. The cubes in $\mathcal{F}$ are pairwise disjoint and $t(Q)=s-1$ for each of them, whence \eqref{tree-sum} and $\mu(P_0)=\vol(P_0)=1$ yield
\[
\sum_{Q\in\mathcal{F}}\mu(Q)^\alpha(\vol Q)^{1-\alpha}
\leq\gamma^{s-1}.
\]
Combining these estimates gives
\begin{align*}
\mu(E\setminus F_\delta)
\leq\sum_{Q\in\mathcal{F}\setminus\mathcal{G}}\mu(Q)
&=\sum_{Q\in\mathcal{F}\setminus\mathcal{G}}\mu(Q)^\alpha\mu(Q)^{1-\alpha}
\stackrel{\eqref{vol-decomp}}{<}\delta^{-\sigma(1-\alpha)}
\sum_{Q\in\mathcal{F}\setminus\mathcal{G}}\mu(Q)^\alpha(\vol Q)^{1-\alpha}\\
&\leq\delta^{-\sigma(1-\alpha)}\gamma^{s-1}.
\end{align*}
Since $\gamma=\tau^{-(1-\alpha)\lambda_\alpha}$ and $\tau^{-s}\leq\delta^{\rho\epsilon}$ by \eqref{t-power-decay},
\[
\gamma^{s-1}
=\tau^{(1-\alpha)\lambda_\alpha}(\tau^{-s})^{(1-\alpha)\lambda_\alpha}
\leq\tau^{(1-\alpha)\lambda_\alpha}
\delta^{(1-\alpha)\lambda_\alpha\rho\epsilon}.
\]
Combining the last two estimates,
\begin{equation}\label{mu-decay}
\mu(E\setminus F_\delta)
\leq\tau^{(1-\alpha)\lambda_\alpha}
\delta^{(1-\alpha)(\lambda_\alpha\rho\epsilon-\sigma)}
\rightarrow0\quad\text{as }\delta\rightarrow0,
\end{equation}
because $\sigma<\lambda_\alpha\rho\epsilon$. We have verified \eqref{M-goal}.

\emph{Conclusion.} By \eqref{mu-decay}, we may choose $\delta_1>\delta_2>\cdots\rightarrow0$ such that $\sum_1^\infty\mu(E\setminus F_{\delta_j})<\infty$ and $\log_\tau(\delta_1^{-\rho\epsilon})\geq2$, and put $F:=\liminf_{j\rightarrow\infty}F_{\delta_j}=\bigcup_{j=1}^\infty\bigcap_{i=j}^\infty F_{\delta_i}$. By the Borel--Cantelli lemma, $E\setminus F=\limsup_{j\rightarrow\infty}(E\setminus F_{\delta_j})$ has $\mu$ measure zero. Finally, $F$ is Borel and
$$
\Cube^{\eta}(F)
=\lim_{j\rightarrow\infty}\Cube^{\eta}\left(\bigcap_{i=j}^\infty F_{\delta_i}\right)
=\lim_{j\rightarrow\infty}\sup_{k\geq j}\Cube^{\eta}_{\zeta(\delta_k)}
\left(\bigcap_{i=j}^\infty F_{\delta_i}\right)
\leq2<\infty
$$
by continuity from below and \eqref{M-goal}. Therefore, $\overline{\dim}_H\,\mu\leq\dim_H F\leq\eta$.
\end{proof}

\section{Frostman measures and approximate polar integration} \label{s:polar}

\subsection{Frostman's lemma with constant 1}

In this paper, in order to describe the local size of a set using disjoint coverings of cubes, with both flexible positioning and enough rigidity for computational ease, we utilize the following terminology.

\begin{definition}\label{sites-and-nets} Let $n\geq 1$ and $m\geq 2$ be integers. Let $t\in(0,\infty)$. An \emph{$m$-adic net} $\Delta$ \emph{at initial scale $t$} is a family of cubes in $\RR^n$ such that \begin{enumerate}
\item Every cube $Q\in\Delta$ has side length $tm^{-k}$ for some integer $k\geq 0$.
\item The set $\Delta_k$ of cubes in $\Delta$ of side length $tm^{-k}$ is pairwise disjoint.
\item For every integer $k\geq 0$ and every cube $Q\in\Delta_k$, there exists a family $\Child(Q)\subseteq\Delta_{k+1}$ such that $Q=\bigcup\Child(Q)$; moreover, $\Delta_{k+1}=\bigcup_{Q\in\Delta_k}\Child(Q)$.
\end{enumerate} We call the set $\Delta_k$ the \emph{$k$-th level} of $\Delta$. We call the set $\bigcup\Delta$ the \emph{site} of $\Delta$ and denote it by $\Site(\Delta)$. A 2-adic net is called a \emph{dyadic net}. \end{definition}

Note that we do not require the cubes in an $m$-adic net $\Delta$ to be of any particular type. Furthermore, each level of a net partitions the site of the net. We introduced the concept of the site of a net in order to localize certain estimates; e.g., see \S\ref{ss:radial} and \eqref{hole-def}. In the latter application, the site of the net $\Delta_{P,x}$ is assembled from small cubes in a multiscale partition of a cube $P$ that fill out a ball from inside. Net content then permits these top-level cubes to be refined at smaller scales to cover $\partial\Omega$ efficiently within the site.

\begin{definition}Let $\Delta$ be an $m$-adic net and let $s\in[0,\infty)$. For every set $E\subset\RR^n$, we define its \emph{$s$-dimensional $\Delta$ content} by \begin{equation}\Net{s}_\infty(E):=\inf\left\{\sum_1^\infty (\side Q_i)^s:E\cap\Site(\Delta)\subseteq\bigcup_1^\infty Q_i,\text{ $Q_i\in\Delta$}\right\}.\end{equation}\end{definition}

The quantity $\Net{s}_\infty$ is an outer measure on $\RR^n$ that is carried by the site of the net in the sense that $\Net{s}_\infty(\RR^n\setminus\Site(\Delta))=0$. The main advantage of net contents over cubical contents lies in our ability to restrict to disjoint covers. The main disadvantage is that net contents are neither translation nor dilation invariant.

\begin{lemma}\label{pick-disjoint} Let $\Delta$ be an $m$-adic net in $\RR^n$ and let $s\in[0,\infty)$. For every set $E\subset\RR^n$, \begin{equation*}\Net{s}_\infty(E)=\inf\bigg\{\sum_i (\side Q_i)^s:\!\!\!\begin{array}{l}\text{$Q_1,Q_2,\dots\in\Delta$ are pairwise disjoint, and}\\ \text{$E\cap\Site(\Delta)\subseteq\bigcup_i Q_i\subseteq \Site(\Delta)$}\end{array}\!\!\bigg\}.\end{equation*}\end{lemma}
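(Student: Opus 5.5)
The plan is to prove the two inequalities between $\Net{s}_\infty(E)$ and the infimum on the right, which we call $I(E)$. The inequality $\Net{s}_\infty(E)\leq I(E)$ is immediate, since every admissible disjoint family in the definition of $I(E)$ is in particular an admissible cover in the definition of $\Net{s}_\infty(E)$. For the reverse inequality, fix any cover $Q_1,Q_2,\dots\in\Delta$ of $E\cap\Site(\Delta)$ with $\sum_i(\side Q_i)^s<\infty$. We will produce a pairwise disjoint subfamily that still covers $E\cap\Site(\Delta)$, is contained in $\Site(\Delta)$, and has sum no larger. If the sum is infinite, there is nothing to prove.

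The key structural fact is the nesting property of nets. If $Q\in\Delta_k$ and $Q'\in\Delta_{k'}$ with $k\leq k'$, then either $Q'\subseteq Q$ or $Q\cap Q'=\emptyset$. To prove it, iterate property (3): $Q'$ belongs to $\Child(P)$ for some $P\in\Delta_{k'-1}$, then to a descendant chain of some unique $P^*\in\Delta_k$, so $Q'\subseteq P^*$. Uniqueness of $P^*$ follows because $\Delta_k$ is pairwise disjoint and cubes are nonempty. If $P^*=Q$, then $Q'\subseteq Q$. Otherwise $P^*\cap Q=\emptyset$, and hence $Q'\cap Q=\emptyset$. The same argument shows every cube of $\Delta$ lies in $\Site(\Delta)$, which settles the containment requirement $\bigcup_i Q_i\subseteq\Site(\Delta)$ automatically.

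Now discard redundant cubes by retaining only the maximal ones: keep $Q_i$ unless there is some $j$ with $Q_i\subsetneq Q_j$, or with $Q_i=Q_j$ and $j<i$. This removes exact duplicates, since distinct indices may label the same set. Maximal elements exist for each $Q_i$, because the chain of cubes in the list containing $Q_i$ has side lengths bounded by $\max$ over finitely many possible levels $0,\dots,k_i$. So any chain of strictly larger cubes has length at most $k_i+1$, and we can pass up to a maximal one.

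The retained family has three properties. First, it covers the union of the original family, and in particular $E\cap\Site(\Delta)$. Second, it is pairwise disjoint: by the nesting property, two retained cubes that intersect are comparable, and maximality together with the duplicate rule forces them to be the same index. Third, its $s$-sum is a subsum of $\sum_i(\side Q_i)^s$, since the terms are nonnegative. Taking the infimum over covers then gives $I(E)\leq\Net{s}_\infty(E)$.

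The only step needing care is the nesting and dichotomy claim from property (3), specifically making sure the ancestor of $Q'$ at level $k$ is well defined. That is handled by the disjointness of each level. No genuine obstacle is expected.
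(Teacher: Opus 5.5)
Your proposal is correct and follows essentially the same route as the paper: the easy inequality is by admissibility, and the reverse comes from passing to the maximal cubes of a given cover, using that cubes of $\Delta$ are either disjoint or nested and that only finitely many levels lie above each $Q_i$. Your version simply spells out two points the paper treats as evident, namely the derivation of the nesting dichotomy from property (3) and the removal of duplicate indices.
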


\begin{proof} Each family on the right is admissible in the definition of $\Net{s}_\infty(E)$, whence the right hand side is at least $\Net{s}_\infty(E)$. Conversely, let $Q_1,Q_2,\dots\in\Delta$ cover $E\cap\Site(\Delta)$. If $Q_j$ has side length $tm^{-k}$, then any cube in the family that contains $Q_j$ has side length $tm^{-\ell}$ for some integer $0\leq \ell\leq k$. Thus, each $Q_j$ is contained in a cube of the family that is maximal with respect to inclusion. Let $I$ index the distinct maximal cubes. Two cubes in $\Delta$ are either disjoint or nested, so $\{Q_i\}_{i\in I}$ is pairwise disjoint and covers $E\cap\Site(\Delta)$. Since the maximal cubes form a subfamily of the original family, $\sum_{i\in I}(\side Q_i)^s\leq \sum_i(\side Q_i)^s$.\end{proof}

We use the following variant of Frostman's lemma for dimensions close to \(n\). Without the assumption \(s>n-1\), the lemma remains valid, but the bound \(\mu(K)\geq \Net{s}_\infty(K)\) must be replaced by \(\mu(K)\geq 2^{-n}\Net{s}_\infty(K)\).

\begin{lemma}[{\cite[Lemma 2.3]{15-9s}}]\label{frostman} Let $\Delta$ be any $m$-adic net in $\RR^n$. If $K$ is a compact set with $K\subseteq \Site(\Delta)$ and $s\in(n-1,n]$, then there exists a Radon measure $\mu$ on $\RR^{n}$ such that $\mu(Q)\leq (\side Q)^s$ for all $Q\in\Delta$ and $\mu(\RR^n)=\mu(K)\geq \Net{s}_\infty(K)$.\end{lemma}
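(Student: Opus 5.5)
We prove the lemma by the standard bottom-up mass-construction for Frostman's lemma, run on the net $\Delta$ itself. The hypothesis $s>n-1$ is used only to obtain the constant $1$.

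\textbf{Step 1: truncation and initial masses.} Fix a large integer $N$. For each cube $Q\in\Delta_N$ with $Q\cap K\neq\emptyset$, put mass $(\side Q)^s$ on $Q$, spread uniformly (normalized Lebesgue measure on $Q$). Cubes of $\Delta_N$ that miss $K$ get mass zero. Call this measure $\mu_N^{(N)}$.

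\textbf{Step 2: capping at coarser levels.} Go upward through $k=N-1,\dots,0$. For each $P\in\Delta_k$:
\begin{itemize}
\item if $\mu^{(k+1)}(P)>(\side P)^s$, multiply $\mu^{(k+1)}$ on $P$ by the factor $(\side P)^s/\mu^{(k+1)}(P)$;
\item otherwise, leave it unchanged.
\end{itemize}
Scaling down never breaks the bounds already achieved at finer levels. So the final measure $\mu_N:=\mu^{(0)}$ satisfies $\mu_N(Q)\leq(\side Q)^s$ for every $Q\in\Delta_k$ with $k\le N$. Since $\Delta$ partitions $\Site(\Delta)$ at each level, the masses of the cubes in $\Delta$ are well defined throughout.

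\textbf{Step 3: the saturation argument (the key step).} For each point $x\in K\cap\Site(\Delta)=K$, take the unique cube of $\Delta_N$ containing $x$. Some ancestor of that cube, at some level $\leq N$, is \emph{saturated}, meaning $\mu_N(Q)=(\side Q)^s$. This holds because the level-$N$ cube started saturated and every subsequent capping step preserves saturation of at least one ancestor along the chain. Take the maximal saturated cubes. These are pairwise disjoint (net cubes are nested or disjoint), they lie in $\Delta$, and they cover $K$. Hence
\[
\mu_N(\RR^n)\;\geq\;\sum_{Q\text{ maximal}}\mu_N(Q)\;=\;\sum(\side Q)^s\;\geq\;\Net{s}_\infty(K).
\]
The first inequality actually holds with equality on the relevant region, since mass only lives on cubes meeting $K$.

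One caveat applies here. The initial level-$N$ cubes intersect $K$ but may stick out of it, and the definition of $\Net{s}_\infty$ only requires a cover of $K\cap\Site(\Delta)$. So these covers are admissible, and no $2^{-n}$ loss appears. This is exactly where general Frostman loses $2^{-n}$: there one must pass from arbitrary cubes to net cubes. Where $s>n-1$ might matter is in ensuring the content is not attained by cubes at levels coarser than $0$. If needed, compare with the single top-level cube, or note that mass cannot exceed $\sum_{Q\in\Delta_0}(\side Q)^s$. I expect this bookkeeping, in particular why the paper stresses $s>n-1$, to be the subtle point. My first guess is that it handles boundary issues when $K$ touches the boundary of net cubes, so that no neighbor-cube doubling is required.

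\textbf{Step 4: limit.} The masses $\mu_N(\RR^n)$ are bounded by $\sum_{Q\in\Delta_0,\,Q\cap K\ne\emptyset}(\side Q)^s<\infty$, since $K$ is compact. By weak-* compactness, take a subsequential limit $\mu$. Each $\mu_N$ is supported in the closed $\sqrt n\,tm^{-N}$-neighborhood of $K$, so $\mu$ is supported on $K$. The total mass passes to the limit, giving $\mu(\RR^n)=\mu(K)\ge\Net{s}_\infty(K)$.

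The bound $\mu(Q)\le(\side Q)^s$ for all $Q\in\Delta$ is the delicate point in the limit, because cubes need not be open or closed. The plan is to obtain it from the fact that open cubes satisfy $\mu(\interior Q)\le\liminf\mu_N(\interior Q)\le(\side Q)^s$. Then boundary mass must be controlled. For this I would use $s>n-1$: with $\mu_N(Q')\le(\side Q')^s$ at fine levels, a face of $Q$ is covered by about $m^{k(n-1)}$ level-$k$ cubes, of total mass at most $m^{k(n-1)}m^{-ks}\to0$. So $\mu(\partial Q)=0$, and $\mu(Q)=\mu(\interior Q)\leq(\side Q)^s$. This boundary estimate is the main obstacle, and it is exactly where $s>n-1$ is needed.
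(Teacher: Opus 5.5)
Your proof is correct and follows the standard Frostman construction: bottom-up capping on the net, covering $K$ by maximal saturated cubes, then a weak-$*$ limit. The paper cites this lemma from Badger--Genschaw rather than reproving it, and your Step 4 correctly locates the only use of $s>n-1$. It prevents the limit from charging $\partial Q$, so that $\mu(Q)=\mu(\interior Q)\le\liminf_N\mu_N(\interior Q)\le(\side Q)^s$ for cubes of every type, and this is why the constant is $1$. The speculation at the end of Step 3 (that the loss might come from passing to net cubes or from coarser levels) should be dropped: the maximal saturated cubes are already admissible for $\Net{s}_\infty(K)$, and $\Delta$ has no cubes coarser than level $0$.

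To make the boundary step airtight, estimate through the open $tm^{-k}$-neighborhood $U_k$ of $\partial Q$. This gives $\mu(\partial Q)\le\mu(U_k)\le\liminf_N\mu_N(U_k)\lesssim(\side Q)^{n-1}(tm^{-k})^{s-(n-1)}\to0$. Count the pairwise disjoint level-$k$ cubes meeting $U_k$ by volume, because neighboring top-level cubes of $\Delta$ need not be aligned with $Q$.
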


\subsection{Approximate polar integration of radial functions}\label{ss:radial}

In this subsection, we let $\Delta$ denote any fixed dyadic net with initial scale $h^{-1}\in(0,1]$ for some integer $h\geq 1$. For convenience, let $B^\Delta(x,r):=B(x,r)\cap\Site(\Delta)$ denote the portion of the open ball $B(x,r)$ lying in the site of the net.

\begin{definition}Let $n\geq 3$. For all $s\in(n-1,n]$ and $z\in\RR^n$, define the constant \begin{equation} O_z(n,s;\Delta) := \sup_\mu \int_{B^\Delta(0,1/2)} \frac{1}{|x-z|^{n-2}}d\mu(x),\end{equation} where the supremum is over all Borel measures $\mu$ on $\RR^n$ carried by $B^\Delta(0,1/2)$, in the sense that $\mu(\RR^n\setminus B^\Delta(0,1/2))=0$, and satisfying the Frostman-type bound $\mu(Q)\leq (\side Q)^s$ for all $Q\in\Delta$. Define \begin{equation}O(n,s;\Delta):=\sup_{z\in\RR^n}O_z(n,s;\Delta).\end{equation}\end{definition}

\begin{lemma}\label{with-Lebesgue-measure} Let $n\geq 3$ and $s\in(n-1,n]$. If $B(0,1/2)\subset\Site(\Delta)$, then \begin{equation*} O(n,s;\Delta) \geq O_0(n,s;\Delta)\geq \int_{B(0,1/2)}\frac{1}{|x|^{n-2}}\,dx=\frac{n\omega_n}{8}.\end{equation*}\end{lemma}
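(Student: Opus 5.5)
The plan is to test the supremum defining $O_0(n,s;\Delta)$ against one explicit measure: Lebesgue measure restricted to the ball, $\mu=\mathcal{L}^n\res B(0,1/2)$. The first inequality $O(n,s;\Delta)\geq O_0(n,s;\Delta)$ is immediate, since $O$ is the supremum of $O_z$ over all $z\in\RR^n$ and $z=0$ is one choice.

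For the second inequality, I would check that $\mu$ is admissible.
\begin{itemize}
\item \emph{Carried by the right set.} Since $B(0,1/2)\subset\Site(\Delta)$, we have $B^\Delta(0,1/2)=B(0,1/2)$. Hence $\mu(\RR^n\setminus B^\Delta(0,1/2))=0$.
\item \emph{Frostman bound.} Every $Q\in\Delta$ has side length $h^{-1}2^{-k}\leq 1$, because the initial scale is $h^{-1}\leq 1$. Since $s\leq n$, this gives
\[
\mu(Q)\leq \vol Q=(\side Q)^n\leq (\side Q)^s.
\]
This is the only point where the hypotheses on the scale and on $s$ enter.
\end{itemize}
With $\mu$ admissible, $O_0(n,s;\Delta)\geq \int_{B(0,1/2)}|x|^{2-n}\,dx$.

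It remains to evaluate this integral in polar coordinates. The surface area of the unit sphere is $n\omega_n$, so
\[
\int_{B(0,1/2)}|x|^{2-n}\,dx=n\omega_n\int_0^{1/2}r^{2-n}r^{n-1}\,dr=n\omega_n\int_0^{1/2}r\,dr=\frac{n\omega_n}{8}.
\]
I expect no real obstacle here. The one step needing care is confirming $\side Q\leq1$, so that raising to the smaller exponent $s$ only increases $(\side Q)^s$; this is exactly what the assumption $h\geq 1$ guarantees.
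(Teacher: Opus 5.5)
Your proposal is correct and follows the paper's proof exactly: test the supremum defining $O_0(n,s;\Delta)$ with Lebesgue measure restricted to $B(0,1/2)$, which is admissible because $\side Q\leq h^{-1}\leq 1$ and $s\leq n$, and then evaluate $\int_{B(0,1/2)}|x|^{2-n}\,dx=n\omega_n/8$ in polar coordinates. You also state explicitly that $B^\Delta(0,1/2)=B(0,1/2)$, so $\mu$ is carried by the correct set; the paper leaves this step implicit.
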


\begin{proof} The first inequality is apparent. If $\mu$ is Lebesgue measure restricted to $B(0,1/2)$, then $\mu(Q)\leq (\side Q)^n\leq (\side Q)^s$ for all $Q\in\Delta$, because cubes in $\Delta$ have side length at most $h^{-1}\leq 1$. Thus, we obtain a lower bound on $O_0(n,s;\Delta)$ by taking $z=0$ and integrating $\int_{B(0,1/2)} |x|^{-(n-2)}\,dx$ using polar integration (recall that $n\omega_n$ is the surface measure of the unit sphere $S^{n-1}\subset\RR^n$).\end{proof}

To estimate the dimension of harmonic measure using Bourgain's method, we need to upper bound $O(n,s;\Delta)$ when $s$ is close to $n$. We achieve this through a flexible scheme to approximate integrals of radial functions against Frostman-type measures. The main idea is to use Whitney-type cubes to transfer part of the homogeneity from the Frostman condition to the integrand in order to convert the original integral into polar integration against Lebesgue measure. The details will take up the remainder of this subsection.

\begin{lemma}\label{whitney cube lemma} Let $n\geq 3$ and $h\geq n^3$. For all $z\in B(0,2)$ and $0<\xi\leq (3h)^{-1}$, there exists a family of pairwise disjoint cubes $\mathcal{W}\subset\Delta$
such that \begin{equation}
B^\Delta(0,1/2)\setminus\{z\}\subset \bigcup\mathcal{W}\subset B^\Delta(0,1/2+2.5\sqrt{n}\xi),\end{equation} \begin{equation}\label{lower bound}0<\frac{\xi\dist(z,Q)}{2(1+\sqrt{n}\xi)}< \side Q \leq \xi \dist(z,Q)\quad\text{for all }Q\in\mathcal{W}.\end{equation}
\end{lemma}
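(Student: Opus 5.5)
The plan is a stopping-time (Whitney-type) selection inside the net $\Delta$. We keep only cubes meeting the punctured ball and select the maximal ones that are small relative to their distance from $z$. Call a cube $Q\in\Delta$ \emph{admissible} if $Q\cap B(0,1/2)\neq\emptyset$ and
\[
\side Q\leq \xi\,\dist(z,Q).
\]
Note that admissibility forces $\dist(z,Q)>0$. Let $\mathcal{W}$ be the family of admissible cubes that are maximal with respect to inclusion among admissible cubes. This makes sense because any cube of $\Delta$ has only finitely many ancestors (sides $h^{-1}2^{-\ell}$ with $0\le \ell\le k$). Two cubes of $\Delta$ are either disjoint or nested, so the maximal ones are pairwise disjoint, exactly as in the proof of Lemma~\ref{pick-disjoint}.

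\emph{Covering.} Let $x\in B^\Delta(0,1/2)\setminus\{z\}$. Each level $\Delta_k$ partitions $\Site(\Delta)$, so there is a chain $Q_0\supset Q_1\supset\cdots$ with $x\in Q_k\in\Delta_k$ and $\side Q_k=h^{-1}2^{-k}\to0$. Since $x\in Q_k$, we have $\dist(z,Q_k)\ge |x-z|-\sqrt n\,h^{-1}2^{-k}\to |x-z|>0$. Hence $Q_k$ is admissible for all large $k$, and it lies in some maximal admissible cube. Therefore $B^\Delta(0,1/2)\setminus\{z\}\subset\bigcup\mathcal{W}$.

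\emph{Top level fails.} No cube $Q_0\in\Delta_0$ meeting $B(0,1/2)$ is admissible. Indeed, taking a point $x\in Q_0\cap B(0,1/2)$ gives $\dist(z,Q_0)\le|z-x|<2+1/2$. Combined with $\xi\le(3h)^{-1}$ this yields $\xi\dist(z,Q_0)<\tfrac{2.5}{3h}<h^{-1}=\side Q_0$. Consequently every $Q\in\mathcal{W}$ has a parent $P\in\Delta$ (with $Q\in\Child(P)$) that meets $B(0,1/2)$ and is not admissible. Since $Q$ is maximal among admissible cubes, this gives $\side P>\xi\dist(z,P)$.

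\emph{Two-sided bound \eqref{lower bound}.} The upper bound $\side Q\le\xi\dist(z,Q)$ is admissibility itself. For the lower bound, $Q\subset P$ implies $\dist(z,Q)\le\dist(z,P)+\diam P=\dist(z,P)+\sqrt n\side P$. Hence
\[
\side P>\xi\bigl(\dist(z,Q)-\sqrt n\,\side P\bigr),\qquad\text{so}\qquad \side P\,(1+\sqrt n\,\xi)>\xi\dist(z,Q).
\]
Using $\side Q=\side P/2$ gives $\side Q>\xi\dist(z,Q)/(2(1+\sqrt n\xi))$, which is positive since $\dist(z,Q)>0$.

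\emph{Outer containment.} Each $Q\in\mathcal{W}$ lies in $\Site(\Delta)$ and meets $B(0,1/2)$. Moreover $\diam Q=\sqrt n\side Q\le\sqrt n\,\xi\dist(z,Q)$, and $\dist(z,Q)<2.5$ by the same estimate as in the top-level step. Thus $Q\subset B(0,1/2+2.5\sqrt n\,\xi)\cap\Site(\Delta)=B^\Delta(0,1/2+2.5\sqrt n\xi)$.

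The only step needing care is showing that the top level fails, since it guarantees every selected cube has a non-admissible parent. This is exactly where $z\in B(0,2)$ and $\xi\le (3h)^{-1}$ enter. The hypothesis $h\ge n^3$ does not appear to be needed for this lemma itself; it is presumably used only in later estimates.
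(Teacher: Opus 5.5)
Your proof is correct and uses the same stopping-time construction as the paper: the family $\mathcal{W}$ consists of the maximal admissible cubes in $\Delta$, level $0$ is excluded because $z\in B(0,2)$ and $\xi\le(3h)^{-1}$, and the lower bound on $\side Q$ comes from the non-admissible parent. The one difference is that you note $\side Q\le\xi\dist(z,Q)$ already forces $\dist(z,Q)>0$, so the parent automatically satisfies $\side P>\xi\dist(z,P)$; this makes the paper's separate Claim that $\dist(z,Q^\uparrow)>0$ unnecessary, and since that Claim is the only place $h\ge n^3$ enters, your remark that this hypothesis is not needed for the lemma is right.
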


\begin{proof} Let $z$ and $\xi$ be given. If $B^\Delta(0,1/2)\setminus\{z\}=\emptyset$, then the lemma holds with $\mathcal{W}=\emptyset$. Thus, let us assume that $B^\Delta(0,1/2)\setminus\{z\}$ is nonempty. Say that a cube $Q\in\Delta$ is \emph{admissible} if (i) $Q\cap B^\Delta(0,1/2)\neq\emptyset$, (ii) $\dist(z,Q)>0$, and (iii) $\side Q \leq \xi \dist(z,Q)$. Every admissible cube satisfies \begin{equation}\label{admissible-side}\side Q \leq \xi \dist(z,Q)\leq \xi\left(|z|+\tfrac{1}{2}\right)\leq \frac{5}{6h}<h^{-1},\end{equation} since $Q\cap B^\Delta(0,1/2)\neq\emptyset$, $z\in B(0,2)$, and $\xi\leq (3h)^{-1}$. In particular, since $\Delta$ has initial scale $h^{-1}$, every admissible cube $Q$ has a unique parent $Q^\uparrow\in\Delta$.

Let $\mathcal{W}$ denote the family of admissible cubes that are maximal with respect to inclusion among admissible cubes. Then $\mathcal{W}$ is pairwise disjoint, because distinct cubes in $\Delta$ are disjoint or nested. If $x\in B^\Delta(0,1/2)\setminus\{z\}$, then every sufficiently small cube $Q\in\Delta$ containing $x$ is admissible, since $\dist(z,Q)\rightarrow|x-z|>0$ and $\side Q\rightarrow 0$ as $\side Q\downarrow 0$; hence $\bigcup\mathcal{W}\supset B^\Delta(0,1/2)\setminus\{z\}$. Similarly, since $\diam Q =\sqrt{n}\side Q \leq 2.5\sqrt{n}\xi$ for every $Q\in\mathcal{W}$ by \eqref{admissible-side}, we have $\bigcup\mathcal{W}\subset B^\Delta(0,1/2+2.5\sqrt{n}\xi)$.

\emph{Claim: $\dist(z,Q^\uparrow)>0$ for every $Q\in\mathcal{W}$.} Assume to the contrary that $\dist(z,Q^\uparrow)=0$ for some $Q\in\mathcal{W}$. Then, since $\xi\leq (3h)^{-1}\leq(3n^3)^{-1}\leq (6\sqrt{n})^{-1}$ and $\side Q\leq \xi\dist(z,Q)$,
    \begin{equation*}\dist(z,Q)\leq \dist(z,Q^\uparrow)+\diam Q^\uparrow=2\sqrt{n}\side Q\leq \frac{1}{3\xi}\side Q
    \leq (1/3)\dist(z,Q).\end{equation*} Since $\dist(z,Q)>0$, this is impossible. The claim is established.

It remains to check the lower bound in \eqref{lower bound}. To that end, fix $Q\in\mathcal{W}$. The parent $Q^\uparrow$ satisfies (i), because $Q\subset Q^\uparrow$, and satisfies (ii), by the Claim. Thus, maximality of $Q$ forces $Q^\uparrow$ to fail (iii); that is, $\side Q^\uparrow >\xi\dist(z,Q^\uparrow)$. Hence \begin{equation*}\begin{split} 2\side Q =\side Q^\uparrow >\xi\dist(z,Q^\uparrow) &\geq \xi\left(\dist(z,Q)-\diam Q^\uparrow\right)\\
&=\xi\dist(z,Q)-2\sqrt{n}\xi\side Q,\end{split}\end{equation*} and collecting the terms in $\side Q$ establishes \eqref{lower bound}.
\end{proof}

\begin{lemma}\label{cube integral estimate} Let $n\geq 3$, let $s\in(n-1,n]$, and let $\mu$ be any admissible measure in the definition of $O(n,s;\Delta)$. Let $h\geq n^3$. Given $z\in B(0,2)$ and $0<\xi\leq (3h)^{-1}$, let $\mathcal{W}$ be a family of cubes as in Lemma \ref{whitney cube lemma}. For all $Q\in\mathcal{W}$,
   \begin{equation}\label{transfer-integral}
       \int_Q\frac{1}{|x-z|^{n-2}}\,d\mu(x)\leq \left(\frac{2}{\xi}\right)^{n-s}(1+\xi \sqrt{n})^{n-2+2(n-s)}\int_Q\frac{1}{|x-z|^{n-2+(n-s)}}\,dx,
   \end{equation}
   where the second integral is with respect to Lebesgue measure.
\end{lemma}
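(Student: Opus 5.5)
The plan is to compare both sides through the maximum and minimum of $|x-z|$ over $Q$, and to use the Frostman bound $\mu(Q)\le(\side Q)^s$ on the left together with the Lebesgue volume $(\side Q)^n$ on the right. Fix $Q\in\mathcal{W}$ and write $d=\dist(z,Q)>0$ and $\ell=\side Q$. For every $x\in Q$ we have $d\le|x-z|\le d+\sqrt n\,\ell$. By \eqref{lower bound}, $\ell\le\xi d$, so
\[
|x-z|\le d(1+\sqrt n\,\xi)\quad\text{for all }x\in Q.
\]

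\emph{Left side.} Since $Q\in\Delta$ and $\mu$ is admissible,
\[
\int_Q|x-z|^{-(n-2)}\,d\mu(x)\le d^{-(n-2)}\mu(Q)\le d^{-(n-2)}\ell^{s}.
\]

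\emph{Right side.} Using the upper bound on $|x-z|$ over $Q$,
\[
\int_Q|x-z|^{-(n-2+(n-s))}\,dx\ge \ell^n\,d^{-(n-2+(n-s))}(1+\sqrt n\,\xi)^{-(n-2+(n-s))}.
\]

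\emph{Comparing.} The ratio of the left bound to the right bound is
\[
\ell^{s-n}\,d^{\,n-s}\,(1+\sqrt n\,\xi)^{n-2+(n-s)}.
\]
Because $n-s\ge0$, we control $\ell^{-(n-s)}$ with the lower Whitney bound in \eqref{lower bound}, namely $\ell>\xi d/(2(1+\sqrt n\,\xi))$. This gives
\[
\ell^{s-n}d^{\,n-s}<\left(\frac{2(1+\sqrt n\,\xi)}{\xi}\right)^{n-s}.
\]
Multiplying through yields the factor $(2/\xi)^{n-s}(1+\xi\sqrt n)^{n-2+2(n-s)}$, which is exactly \eqref{transfer-integral}.

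There is no real obstacle here. The only point needing care is the bookkeeping of the $(1+\sqrt n\,\xi)$ powers: one copy of exponent $n-2+(n-s)$ comes from bounding the Lebesgue integrand from below, and one copy of exponent $n-s$ comes from the lower Whitney bound on $\side Q$. Together they give exponent $n-2+2(n-s)$. Note also that the case $s=n$ is trivial, since then both sides agree up to the factor $(1+\xi\sqrt n)^{n-2}$.
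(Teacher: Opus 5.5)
Your proposal is correct and follows essentially the same route as the paper's proof. Like the paper, you bound the $\mu$-integral by $\dist(z,Q)^{-(n-2)}(\side Q)^s$ and convert $(\side Q)^{-(n-s)}$ into $\dist(z,Q)^{-(n-s)}$ using the lower Whitney bound, then bound the Lebesgue integral from below using $\sup_{x\in Q}|x-z|\le(1+\xi\sqrt n)\dist(z,Q)$. One cosmetic point: your strict inequality $\ell^{s-n}d^{\,n-s}<\bigl(2(1+\sqrt n\,\xi)/\xi\bigr)^{n-s}$ should be $\le$, since both sides equal $1$ when $s=n$.
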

\begin{proof} On the one hand, the Frostman-type bound $\mu(Q)\leq (\side Q)^s$ and lower bound in \eqref{lower bound} imply
\begin{equation*}
        \int_Q \frac{1}{|x-z|^{n-2}} \,d\mu(x)
            \leq \frac{(\side Q)^n}{\dist(z,Q)^{n-2}(\side Q)^{n-s}} \leq \left(\frac{2(1+\xi\sqrt{n})}{\xi}\right)^{n-s}\frac{(\side Q)^n}{\dist(z,Q)^{n-2+(n-s)}}.
    \end{equation*}
    On the other hand, if we write $\ex(Q,z)=\sup_{y\in Q}|y-z|$ for the \emph{excess} of $Q$ over $z$, then the upper bound in \eqref{lower bound} implies $\ex(Q,z) \leq \dist(z,Q)+\diam Q\leq (1+\xi\sqrt{n})\dist(z,Q)$. Hence
    \begin{equation*}
     \int_Q \frac{1}{|x-z|^{n-2+(n-s)}}dx \geq \frac{(\side Q)^{n}}{\ex(Q,z)^{n-2+(n-s)}} \geq \frac{(\side Q)^n}{(1+\xi\sqrt{n})^{n-2+(n-s)}\dist(z,Q)^{n-2+(n-s)}}.
    \end{equation*} Merging the displayed equations yields \eqref{transfer-integral}.
\end{proof}

\begin{theorem}\label{t:O} Let $n\geq 3$ and $h\geq n^3$ be integers. For all $s\in(n-1,n]$ satisfying $n-s\leq 0.2/(n^6\log h)$,
\begin{equation}\label{O-est-2} O(n,s;\Delta) \leq \left(1+\frac{1}{n^6}\right) \frac{n\omega_n}{8}.\end{equation}
\end{theorem}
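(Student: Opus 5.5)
We fix an admissible measure $\mu$ and a point $z\in\RR^n$, and we aim to bound $\int_{B^\Delta(0,1/2)}|x-z|^{-(n-2)}\,d\mu(x)$ by the right-hand side of \eqref{O-est-2}, uniformly in $z$ and $\mu$. Throughout we write $\rho=n-s\leq 0.2/(n^6\log h)$. We split into the cases $|z|\geq 2$ and $z\in B(0,2)$.

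\emph{Far case, $|z|\geq2$.} Here the integrand is at most $(3/2)^{-(n-2)}$ on $B(0,1/2)$, so it suffices to bound the total mass of $\mu$. We cover $B^\Delta(0,1/2)$ by the level-$0$ cubes of $\Delta$ (side $h^{-1}$) that meet $B(0,1/2)$. All such cubes lie in $B(0,1/2+\sqrt n/h)$, so there are at most $\omega_n(1/2+\sqrt n/h)^nh^n$ of them. Each carries mass at most $h^{-s}$, which gives
\[
\mu(\RR^n)\leq \omega_n 2^{-n}\Big(1+\tfrac{2\sqrt n}{h}\Big)^n h^{\rho}.
\]
Using $h\geq n^3$ and $h^\rho\leq e^{0.2/n^6}$, the resulting bound $\omega_n2^{-n}(2/3)^{n-2}\cdot O(1)$ is much smaller than $n\omega_n/8$ for every $n\geq3$. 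A one-line numerical check at $n=3$ suffices, since the ratio decreases in $n$.

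\emph{Near case, $z\in B(0,2)$.} Here we apply Lemma \ref{whitney cube lemma} with a small parameter $\xi$, to be chosen. If $z$ is an atom of $\mu$, the Frostman bound on arbitrarily small cubes containing $z$ forces $\mu(\{z\})=0$, so we may discard $z$. Summing Lemma \ref{cube integral estimate} over the pairwise disjoint family $\mathcal W$ then gives
\[
\int\frac{d\mu(x)}{|x-z|^{n-2}}\leq\Big(\frac2\xi\Big)^{\rho}(1+\xi\sqrt n)^{n-2+2\rho}\int_{B(0,R)}\frac{dx}{|x-z|^{n-2+\rho}},\qquad R=\tfrac12+2.5\sqrt n\,\xi .
\]
Since the kernel is radially decreasing and the ball is centered at $0$, the Lebesgue integral is maximized at $z=0$ (by rearrangement, or by Riesz potentials of a ball attaining their maximum at the center). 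There it equals $n\omega_nR^{2-\rho}/(2-\rho)$. Comparing with $n\omega_n/8=n\omega_n(1/2)^2/2$, the total multiplicative error is
\[
\Big(\frac2\xi\Big)^{\rho}(1+\xi\sqrt n)^{n}\,(1+5\sqrt n\,\xi)^{2}\,2^{\rho}\,\frac{2}{2-\rho}.
\]

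\emph{Choice of $\xi$ and bookkeeping.} The natural choice $\xi=(3h)^{-1}$ fails: the factor $(1+\xi\sqrt n)^n\approx1+n^{3/2}/(3h)$ is only $1+O(n^{-3/2})$, far bigger than $1+n^{-6}$. So we take $\xi=(3h)^{-1}n^{-k}$ with $k\approx 8$; this still satisfies $\xi\leq(3h)^{-1}$. Then $n^{3/2}\xi\leq n^{-9.5}$ because $h\geq n^3$. For the remaining factor we have $\log(2/\xi)=\log(6h)+k\log n\leq C\log h$, using $\log n\leq\tfrac13\log h$. Hence
\[
(2/\xi)^\rho\leq\exp\big(0.2C/n^6\big).
\]
This is the main obstacle: that factor alone is about $1+0.2C/n^6$. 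Its constant $C$, together with $2^\rho$, the factor $2/(2-\rho)$ (both $1+O(n^{-6}/\log h)$), and the $\xi$-terms, must multiply to at most $1+n^{-6}$. We expect this to work only after tuning $k$ carefully and using $\log(6h)\leq(1+\log 6/\log 27)\log h$. If the constant comes out slightly too large, we will instead take $\xi$ of the form $n^{-k}/(3h)$ with $k$ optimized, and exploit $\log h\geq 3\log n\geq 3\log 3$ to absorb the lower-order terms. After this, the elementary inequality $e^x\leq 1+x+x^2$ for small $x$ closes the estimate.
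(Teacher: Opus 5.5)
Your proposal follows the paper's proof step for step. The far case bounds $\mu(\RR^n)$ by counting level-$0$ cubes, and the near case uses the Whitney family, the transfer estimate summed over $\mathcal W$, and the rearrangement bound placing $z$ at the origin; the only difference is your choice $\xi=n^{-8}/(3h)$ in place of the paper's $\xi=\min\bigl((3h)^{-1},(92n^{7.5})^{-1}\bigr)$. The bookkeeping you left hedged does close with $k=8$: $\rho\log(4/\xi)\leq 0.2\,(11/3+\log 12/\log 27)\,n^{-6}\approx 0.884\,n^{-6}$, and the factors $2/(2-\rho)$, $(1+\xi\sqrt n)^n$, and $(1+5\sqrt n\,\xi)^2$ add less than $0.07\,n^{-6}$ to the exponent, so the total stays below $\log(1+n^{-6})>0.999\,n^{-6}$.
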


\noindent\emph{Remark.} The upper bound in \eqref{O-est-2} depends on $n$ and $h$, but otherwise is independent of the net $\Delta$.

\begin{proof} Let $s$ satisfy the hypothesis. Then $(n-s)\log h\leq 0.2n^{-6}\leq 0.2/729<\log 2$; in particular, $h^{n-s}\leq 2$. Fix $z\in\RR^n$ and let $\mu$ be an admissible measure in the definition of $O_z(n,s;\Delta)$. We must bound $O_z(n,s;\Delta)$ from above independently of $z$.

We first dispense with the case when $z$ is far away from the origin. Suppose that $|z|\geq 2$. Every cube $Q\in\Delta$ of side length $h^{-1}$ that meets $B(0,1/2)$ is contained in $B(0,(1/2)+\sqrt{n}h^{-1})$ and carries $\mu(Q)\leq h^{-s}=h^{n-s}\vol Q$. Since the cubes of side length $h^{-1}$ in $\Delta$ are pairwise disjoint and $\mu$ is carried by $B^\Delta(0,1/2)$, summing volumes gives
\begin{equation*}\mu(\RR^n)\leq h^{n-s}\omega_n\left(\frac{1}{2}+\frac{\sqrt{n}}{h}\right)^n.\end{equation*}
Because $|x-z|\geq |z|-(1/2)\geq 3/2$ for all $x\in B(0,1/2)$, and because $h^{n-s}\leq 2$, $\sqrt{n}/h\leq n^{-2.5}$, and $(1+2n^{-2.5})^n\leq \exp(2n^{-1.5})<1.5$,
\begin{equation}\label{O-part0}\int_{B^\Delta(0,1/2)} \frac{d\mu(x)}{|x-z|^{n-2}}\leq \frac{h^{n-s}\omega_n}{(3/2)^{n-2}}\left(\frac{1}{2}+\frac{\sqrt{n}}{h}\right)^n \leq \frac{n\omega_n}{8}\cdot\frac{54}{n\,3^n}\leq \frac{n\omega_n}{8},\end{equation}
the last inequality because $n\,3^n\geq 81$ for all $n\geq 3$.

Suppose instead that $z\in B(0,2)$. Fix $0<\xi\leq (3h)^{-1}$ and let $\mathcal{W}$ be a family of cubes as in Lemma \ref{whitney cube lemma}. Since $\mu$ is carried by $B^\Delta(0,1/2)$ and the Frostman-type bound applied to cubes containing a point of $\Site(\Delta)$ at arbitrarily small scales gives zero mass to singletons, we have $\mu(\{z\})=0$. Thus, by Lemmas \ref{whitney cube lemma} and \ref{cube integral estimate},
\begin{equation}\begin{split}\label{O-part1}\int_{B^\Delta(0,1/2)}&\frac{1}{|x-z|^{n-2}}\,d\mu(x) \leq \sum_{Q\in\mathcal{W}}
\int_Q \frac{1}{|x-z|^{n-2}}\,d\mu(x) \\&\leq \left(\frac{2}{\xi}\right)^{n-s}(1+\xi \sqrt{n})^{n-2+2(n-s)}\sum_{Q\in\mathcal{W}}\int_Q\frac{1}{|x-z|^{n-2+(n-s)}}\,dx \\
&\leq \left(\frac{2}{\xi}\right)^{n-s}(1+\xi \sqrt{n})^{n-2+2(n-s)}\int_{B(0,1/2+2.5\sqrt{n}\xi )}\frac{1}{|x-z|^{n-2+(n-s)}}\,dx.\end{split}\end{equation}
For brevity, temporarily write $r=(1/2)+2.5\sqrt{n}\xi$ and $\theta=n-2+(n-s)$. By Tonelli's theorem or layer-cake formula, and by monotonicity of volume,
\begin{equation}\begin{split} \label{O-part2}
&\int_{B(0,r)}\lvert x-z\rvert^{-\theta}\,dx
 =\int_{0}^\infty \mathcal{L}^n\left(B(z,t^{-1/\theta})\cap B(0,r)\right)dt\\
&\qquad\leq \int_{0}^\infty \min\left(\mathcal{L}^n(B(z,t^{-1/\theta})),\mathcal{L}^n(B(0,r))\right)\,dt\\
&\qquad=\int_{0}^\infty \mathcal{L}^n\left(B(0,t^{-1/\theta})\cap B(0,r)\right)dt
 =\int_{B(0,r)}\lvert x\rvert^{-\theta}\,dx\\
&\qquad=n\omega_n\int_0^r t^{1-(n-s)}\,dt
 = \frac{n\omega_n}{2-(n-s)}\left(\frac{1}{2}+2.5\sqrt{n}\xi\right)^{2-(n-s)}.
\end{split}\end{equation}
Together \eqref{O-part0}, \eqref{O-part1}, and \eqref{O-part2} imply
\begin{equation*}\begin{split} \sup_{z\in\RR^n} O_z(n,s;\Delta) &\leq \left(\frac{2}{\xi}\right)^{n-s}(1+\xi\sqrt{n})^{n-2+2(n-s)}\frac{n\omega_n}{2-(n-s)}\left(\frac{1}{2}+2.5\sqrt{n}\xi\right)^{2-(n-s)}\\
&=\frac{n\omega_n}{8}\left(\frac{4}{\xi}\right)^{n-s}(1+\xi\sqrt{n})^{n-2+2(n-s)}\frac{1}{1-\frac{n-s}{2}}\left(1+5\sqrt{n}\xi\right)^{2-(n-s)}.\end{split}\end{equation*}
To derive the effective bound, set
\begin{equation}\label{xi-choice}\xi:=\min\left(\frac{1}{3h},\frac{1}{92n^{7.5}}\right)\end{equation}
and split the last display as $O(n,s;\Delta)\leq (n\omega_n/8)\cdot A\cdot B$, where
\begin{equation*} A:=(1+\xi\sqrt{n})^{n-2}\left(1+5\sqrt{n}\xi\right)^{2},\qquad B:=\frac{1}{1-\frac{n-s}{2}}\left(\frac{4(1+\xi\sqrt{n})^{2}}{\xi}\right)^{n-s}.\end{equation*}
Since $(1+0.04n^{-6})(1+0.95n^{-6})=1+0.99n^{-6}+0.038n^{-12}\leq 1+n^{-6}$, to prove \eqref{O-est-2} it suffices to bound $A$ by $1+0.04n^{-6}$ and $B$ by $1+0.95n^{-6}$.

From $1+x\leq e^x$ for all $x\in\RR$ and $n+8\leq 11n/3$, we obtain
\begin{equation*} A\leq \exp\left(\sqrt{n}\xi(n+8)\right)\leq \exp\left(\frac{11n^{1.5}\xi}{3}\right)\leq\exp\left(\frac{11}{276n^{6}}\right).\end{equation*}
Because $\log(1+x)\geq x-x^2/2$ and $n^{-6}\leq 1/729$,
\begin{equation*}
\log(1+0.04n^{-6})
\geq n^{-6}\left(0.04-\frac{0.0008}{729}\right)
>0.0399n^{-6}
>\frac{11}{276n^6}.
\end{equation*}
Therefore, $A\leq 1+0.04n^{-6}$.

Next, $\sqrt{n}\xi\leq (92n^{7})^{-1}<10^{-5}$, so that $4e^{0.51}(1+\xi\sqrt{n})^{2}<6.67$. Since $n-s\leq 0.2/(n^6\log h)<10^{-4}$, we have $1/(1-\frac{n-s}{2})\leq e^{0.51(n-s)}$ and therefore
\begin{equation*}\log B\leq (n-s)\log\frac{4e^{0.51}(1+\xi\sqrt{n})^{2}}{\xi}\leq (n-s)\log\frac{6.67}{\xi}.\end{equation*}
Since $h\geq n^3$ and $n\geq3$, we have $\log n\leq \frac13\log h$ and $\log h\geq 3\log 3>3.21$. Thus, by \eqref{xi-choice},
\[
\begin{split}
\log\frac{6.67}{\xi}
&=\max\left(\log(20.01)+\log h,\ \log(613.64)+7.5\log n\right)\\
&< \max\left(2.997+\log h,\ 6.42+7.5\log n\right)\\
&\leq \max\left(2\log h,\ 2\log h+2.5\log h\right)
=4.5\log h.
\end{split}
\]
Hence $\log B\leq 0.9n^{-6}$, while $\log(1+0.95n^{-6})\geq 0.95n^{-6}(1-0.475n^{-6})>0.949n^{-6}$. Thus, $B\leq 1+0.95n^{-6}$, and we have proved \eqref{O-est-2}.
\end{proof}

We now illustrate how to use the constants $O(n,s;\Delta)$ with other normalizations.

\begin{lemma}\label{scale-O} Let $n\geq 3$ and $h\geq 1$ be integers. Let $x_0\in\RR^n$, let $r>0$, and let $\Delta$ be a dyadic net with initial scale $2rh^{-1}$. Let $s\in(n-1,n]$. If $\nu$ is a Borel measure with $\nu(\RR^n\setminus B^{\Delta}(x_0,r))=0$ and $\nu(Q)\leq (\side Q)^s$ for all $Q\in\Delta$, then
\begin{equation*} \sup_{z\in\RR^n} \int_{B^{\Delta}(x_0,r)} \frac{1}{|x-z|^{n-2}}\,d\nu(x) \leq (2r)^{2-(n-s)}O(n,s;\widetilde\Delta),\end{equation*}
where $\widetilde\Delta$ is the dyadic net with initial scale $h^{-1}$ obtained by applying the homeomorphism $y\mapsto f(y)=(y-x_0)/(2r)$ to $\Delta$.\end{lemma}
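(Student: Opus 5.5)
The plan is a direct change of variables under the affine map $f(y)=(y-x_0)/(2r)$. First I will record how $f$ acts on the relevant objects. It sends $B(x_0,r)$ onto $B(0,1/2)$. It sends each cube $Q\in\Delta$ of side length $2rh^{-1}2^{-k}$ to a cube $f(Q)$ of side length $h^{-1}2^{-k}$. Since $f$ is a bijection, it preserves disjointness, unions and the parent--child structure. Hence $\widetilde\Delta=\{f(Q):Q\in\Delta\}$ is a dyadic net with initial scale $h^{-1}$, and $\Site(\widetilde\Delta)=f(\Site(\Delta))$. In particular, $f(B^\Delta(x_0,r))=B^{\widetilde\Delta}(0,1/2)$.

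Next I will define the rescaled measure $\widetilde\nu:=(2r)^{-s}f_\#\nu$, so that $\widetilde\nu(A)=(2r)^{-s}\nu(f^{-1}(A))$. This measure is carried by $B^{\widetilde\Delta}(0,1/2)$, because $\nu$ is carried by $B^\Delta(x_0,r)$. It also satisfies the Frostman bound on the new net: for $\widetilde Q=f(Q)\in\widetilde\Delta$,
\[
\widetilde\nu(\widetilde Q)=(2r)^{-s}\nu(Q)\leq(2r)^{-s}(\side Q)^s=(\side \widetilde Q)^s.
\]
Thus $\widetilde\nu$ is an admissible measure in the definition of $O_w(n,s;\widetilde\Delta)$ for every $w\in\RR^n$.

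Finally, I will fix $z\in\RR^n$ and set $w=f(z)$. For $x=f^{-1}(u)$ we have $|x-z|=2r|u-w|$, so the change of variables formula for pushforward measures gives
\[
\int_{B^\Delta(x_0,r)}\frac{d\nu(x)}{|x-z|^{n-2}}
=(2r)^{-(n-2)}\int_{B^{\widetilde\Delta}(0,1/2)}\frac{d(f_\#\nu)(u)}{|u-w|^{n-2}}
=(2r)^{s-(n-2)}\int_{B^{\widetilde\Delta}(0,1/2)}\frac{d\widetilde\nu(u)}{|u-w|^{n-2}}.
\]
The last integral is at most $O_w(n,s;\widetilde\Delta)\leq O(n,s;\widetilde\Delta)$. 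Since $s-(n-2)=2-(n-s)$, the prefactor is $(2r)^{2-(n-s)}$. Taking the supremum over $z$ then gives the claimed bound.

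No step is a genuine obstacle; the main care needed is bookkeeping. I must check that the net structure, including the site and the initial scale $h^{-1}$, transfers exactly under $f$. I must also get the exponent right: the factor $(2r)^{-(n-2)}$ from the kernel combines with the factor $(2r)^{s}$ from the normalization of $\widetilde\nu$ to give $(2r)^{2-(n-s)}$.
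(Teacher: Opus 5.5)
Your proposal is correct and follows essentially the same route as the paper: push $\nu$ forward under $f$, rescale by $(2r)^{-s}$, check that the result is admissible for $O(n,s;\widetilde\Delta)$, and change variables. The kernel factor $(2r)^{-(n-2)}$ combines with the normalization factor $(2r)^{s}$ to give $(2r)^{2-(n-s)}$, exactly as in the paper.
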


\begin{proof} Note that $f$ maps $\Site(\Delta)$ onto $\Site(\widetilde\Delta)$ and $B(x_0,r)$ onto $B(0,1/2)$; hence $f$ maps $B^{\Delta}(x_0,r)$ onto $B^{\widetilde\Delta}(0,1/2)$. Define the image measure $\mu=(2r)^{-s} f_\sharp\nu$ by the rule $\mu(E)=(2r)^{-s}\nu(f^{-1}(E))$ for all Borel sets $E\subset\RR^n$. In particular, $\mu(\RR^n\setminus B^{\widetilde\Delta}(0,1/2))=0$ and $$\mu(Q) = (2r)^{-s} \nu(x_0+2r Q)
\leq (2r)^{-s} (\side(x_0+2rQ))^s = (\side Q)^s$$ for all $Q\in\widetilde\Delta$. Hence $\mu$ is an admissible measure for the constant $O(n,s;\widetilde\Delta)$. Let $w\in\RR^n$, let $z=f(w)$, and define $g(x)=|x-z|^{-(n-2)}$ for all $x\in\RR^n$. By the change-of-variables formula $\int g\,d f_\sharp\nu=\int g\circ f\,d\nu$ for image measures, e.g., see \cite[Theorem 1.19]{Mattila}, we have \begin{equation*}\begin{split} O(n,s;\widetilde\Delta) \geq \int_{B^{\widetilde \Delta}(0,1/2)} \frac{1}{|x-z|^{n-2}}\,d\mu(x)&= \frac{1}{(2r)^s}\int_{B^{\Delta}(x_0,r)} \frac{1}{|f(y)-f(w)|^{n-2}}\,d\nu(y) \\ &= \frac{(2r)^{n-2}}{(2r)^s}\int_{B^{\Delta}(x_0,r)} \frac{1}{|y-w|^{n-2}}\,d\nu(y).\end{split}\end{equation*} Rearranging and taking the supremum over $w\in\RR^n$ completes the proof.\end{proof}

By an abuse of notation, given integers $n\geq 3$ and $h\geq 1$, a point $x_0\in\RR^n$, a number $r>0$, a number $s\in(n-1,n]$, and a dyadic net $\Delta$ with initial scale $2rh^{-1}$, we write $O(n,s;\Delta)$ for $O(n,s;\widetilde\Delta)$, where $\widetilde\Delta$ is the net at initial scale $h^{-1}$ obtained from $\Delta$ as in Lemma \ref{scale-O}. The value so defined depends on the normalization, but our estimates use this constant only through upper bounds that are uniform over nets at the normalized scale. In particular, when $h\geq n^3$ and $n-s\leq 0.2/(n^6\log h)$, Theorem \ref{t:O} shows that \eqref{O-est-2} holds independently of the particular normalized net $\widetilde\Delta$. Thus, under these assumptions, \eqref{O-est-2} applies to $O(n,s;\Delta)$ under any normalization. In Lemma \ref{bourgain-round} below, the data $h$, $x_0$, and $r$ are supplied by the hypotheses.

\subsection{Bourgain's estimate, again}

The bounds on $O(n,s;\Delta)$ in Theorem \ref{t:O} yield an improved effective version of Bourgain's estimate for harmonic measure; compare to \cite[Lemma 1]{Bourgain} and \cite[Lemma 3.1]{15-9s}. Here and below, if $V\subset\RR^n$ is a bounded open set and $x\in V$, then $\omega^x_V$ denotes the exit distribution of Brownian motion started at $x$ and run until it first leaves $V$; equivalently, $\omega^x_V=\omega^x_{V^x}$, where $V^x$ is the connected component of $V$ containing $x$. Note that $V^x$ is open and $\partial V^x\subseteq\partial V$. For nonempty sets $A,B\subset\RR^n$, we write $\gap(A,B):=\inf_{a\in A}\inf_{b\in B}|a-b|$
for the \emph{gap} between $A$ and $B$ (often referred to as the \emph{distance} between $A$ and $B$, despite failing the triangle inequality).

\begin{lemma}[Bourgain's estimate for a round snapshot with a nearly sharp constant] \label{bourgain-round}
Let $n\geq 3$ and $h\geq n^3$ be integers and let $U\subset\RR^n$ be a bounded domain. Suppose that the open ball $B(x_0,r)\subset U$ satisfies $\gap(B(x_0,r),\partial U)=gr$ for some $g>1$. Let $\Delta$ be any dyadic net with initial scale $2r\cdot h^{-1}$ such that $\Site(\Delta)\cap \overline{B(x_0,r)}$ is compact. For all closed $E\subset\RR^n$ with $x_0\not\in E$ and for all $s\leq n$ satisfying $s\geq n-0.2/(n^6\log h)$,
\begin{equation}\label{generic-bourgain}
\omega^{x_0}_{U\setminus E}(E\cap U) \geq \frac{2}{n}\cdot \frac{1-g^{-(n-2)}}{1+n^{-6}}
\cdot \frac{\Net{s}_\infty(E\cap B(x_0,r))}{\omega_n r^s}.
\end{equation}
\end{lemma}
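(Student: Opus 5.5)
Let $K:=E\cap\overline{B(x_0,r)}\cap\Site(\Delta)$, which is compact because $E$ is closed and $\Site(\Delta)\cap\overline{B(x_0,r)}$ is compact. The plan is Bourgain's classical Green-potential argument, with the constant $O(n,s;\Delta)$ from Theorem~\ref{t:O} supplying the sharp normalization.

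First, I apply Frostman's lemma (Lemma~\ref{frostman}) to the portion of $K$ inside the open ball. More precisely, for each compact $K'\subset E\cap B(x_0,r)\cap\Site(\Delta)$, it gives a Radon measure $\nu$ on $K'$ with $\nu(Q)\le(\side Q)^s$ for all $Q\in\Delta$ and $\nu(\RR^n)\ge\Net{s}_\infty(K')$. By inner regularity of net content on such sets (approximating $E\cap B(x_0,r)$ by $E\cap\overline{B(x_0,r')}$ with $r'\uparrow r$), I expect it suffices to prove the estimate with $\Net{s}_\infty(K')$ in place of $\Net{s}_\infty(E\cap B(x_0,r))$. If continuity from below becomes an issue, I would instead run the argument with $r'$ slightly larger than $r$ and let $r'\downarrow r$. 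In either case $\nu$ is carried by $B^\Delta(x_0,r)$, so Lemma~\ref{scale-O} and Theorem~\ref{t:O} give
\[
\sup_z\int\frac{d\nu(x)}{|x-z|^{n-2}}\le(2r)^{2-(n-s)}(1+n^{-6})\frac{n\omega_n}{8}.
\]

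Second, I form the Newtonian potential $u(y)=\int|x-y|^{-(n-2)}\,d\nu(x)$. It is harmonic off $\operatorname{supp}\nu\subset E$ and bounded everywhere by the constant $M$ just displayed. I then compare $u$ with harmonic measure on $V=U\setminus E$. On $\partial V\cap E$ we have $u\le M$. On $\partial U$, every point lies at distance at least $gr+r$ from $x_0$, hence at distance at least $gr$ from every point of $\operatorname{supp}\nu$, so $u\le (gr)^{-(n-2)}\nu(\RR^n)$. Since $u$ is bounded and continuous off a polar set, the maximum principle (or optional stopping for Brownian motion, which is cleaner with irregular boundaries) yields
\[
u(x_0)\le M\,\omega^{x_0}_{V}(E\cap U)+(gr)^{-(n-2)}\nu(\RR^n).
\]
On the other side, $|x-x_0|<r$ on $\operatorname{supp}\nu$, so $u(x_0)\ge r^{-(n-2)}\nu(\RR^n)$. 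Subtracting gives
\[
r^{-(n-2)}\bigl(1-g^{-(n-2)}\bigr)\nu(\RR^n)\le M\,\omega^{x_0}_V(E\cap U).
\]
Inserting $M$, the ratio is
\[
\frac{8\,r^{-(n-2)}}{(2r)^{2-(n-s)}n\omega_n}=\frac{2}{n\omega_n}\cdot\frac{2^{n-s}}{r^{s}}\ge\frac{2}{n\omega_n r^s},
\]
and this is exactly the bound~\eqref{generic-bourgain}.

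The main obstacle is the rigorous comparison on the possibly irregular boundary $\partial V$, namely the boundary behavior of $u$ at points of $E$ and of $\partial U$. I would handle this probabilistically. $u(B_t)$ is a bounded martingale up to the exit time of $V$ (for $n\ge3$, the singular support of $\nu$ is hit only at the exit time). At exit, the Brownian path is either in $E$, where $u\le M$, or on $\partial U\setminus E$, where the distance bound gives $u\le (gr)^{-(n-2)}\nu(\RR^n)$. This avoids any regularity issue, and the rest of the argument is bookkeeping.
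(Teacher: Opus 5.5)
Your proof is correct and follows the paper's own argument: a Frostman measure, the potential bound from Lemma~\ref{scale-O} and Theorem~\ref{t:O}, the lower bound at $x_0$ and the upper bound on $\partial U$. Your optional-stopping comparison on $U\setminus E$ is equivalent to the paper's maximum principle for the normalized function $w$ on $U\setminus K$, followed by monotonicity of harmonic measure. The one loose point is the inner approximation, which the paper avoids by working directly with $K=E\cap\overline{B(x_0,r)}\cap\Site(\Delta)$ and noting that $\partial B(x_0,r)$ has zero $\Net{s}_\infty$ content because $s>n-1$. Hence the Frostman measure does not charge the sphere and $\Net{s}_\infty(K)=\Net{s}_\infty(E\cap B(x_0,r))$. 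The same thin-shell estimate also justifies your limit $r'\uparrow r$, so the $r'>r$ fallback is never needed; it would not fit the hypotheses of Lemma~\ref{scale-O} anyway.
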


\begin{proof}
Let $E\subset\RR^{n}$ be closed. We may assume that $\Net{s}_\infty(E\cap B(x_0,r))>0$, since otherwise the right hand side of \eqref{generic-bourgain} is zero and the estimate is trivial. Then, under the assumption that $\Site(\Delta)\cap\overline{B(x_0,r)}$ is compact, the set $K:=E\cap \overline{B(x_0,r)}\cap \Site(\Delta)$ is compact and $K\subset\Site(\Delta)$. Let $s$ satisfy the bounds in the hypothesis and note that $s\in(n-1,n]$, since $0.2/(n^6\log h)<1$ for all $n\geq 3$ and $h\geq n^3$. Note that $\partial B(x_0,r)$ has $\Net{s}_\infty$ measure zero, since $\partial B(x_0,r)$ has Hausdorff dimension $n-1$ and $s>n-1$. Hence $\Net{s}_\infty(K)=\Net{s}_\infty(E\cap B(x_0,r))$. By Lemma \ref{frostman}, there exists a finite Borel measure $\mu$ on $\RR^{n}$ with support in $K$ such that $\mu(Q)\leq (\side Q)^s$ for all $Q\in\Delta$ and $\mu(\RR^n)=\mu(K)\geq \Net{s}_\infty(K)$. The measure $\mu$ does not charge $\partial B(x_0,r)$. Indeed, given $\epsilon>0$, pick cubes $Q_1,Q_2,\dots\in\Delta$ that cover $\partial B(x_0,r)\cap\Site(\Delta)$ with $\sum_i (\side Q_i)^s<\epsilon$; since $\mu$ is carried by $K\subset\Site(\Delta)$,
\begin{equation*}
\mu(\partial B(x_0,r))\leq \sum_i \mu(Q_i)\leq \sum_i (\side Q_i)^s<\epsilon.
\end{equation*}
Thus, $\mu(\RR^n\setminus B^{\Delta}(x_0,r))=0$, so that $\mu$ is admissible in Lemma \ref{scale-O}.

Consider the harmonic function $u(z)=\int_K |x-z|^{-(n-2)}\,d\mu(x)$ defined on $\RR^{n}\setminus K$. By Lemma \ref{scale-O}, the inequality $2^{2-(n-s)}\leq 4$, and elementary estimates, we have
\begin{equation}\label{estimate1}
u(z) \leq (2r)^{2-(n-s)}O(n,s;\Delta) \leq 4 \cdot r^{2-(n-s)}O(n,s;\Delta)
\quad \forall\,z\in \RR^n\setminus K,
\end{equation}
\begin{equation}\label{estimate2}
u(x_0)\geq r^{-(n-2)}\mu(K),
\end{equation}
\begin{equation}\label{estimate3}
u(x)\leq (\gap(B(x_0,r),\partial U))^{-(n-2)}\mu(K)=(g r)^{-(n-2)}\mu(K)
\quad \forall\,x\in\partial U.
\end{equation}
The estimates \eqref{estimate2} and \eqref{estimate3} are straightforward. Since $\mu(K)\geq\Net{s}_\infty(K)>0$, we have $u>0$ on $\RR^n\setminus K$; in particular, $\|u\|_{L^\infty(\RR^n\setminus K)}\in(0,\infty)$ by \eqref{estimate1}. Following \cite{Bourgain}, define an auxiliary harmonic function $w$ on $\RR^{n}\setminus K$ by setting
\[
w(x)=\frac{u(x)-\|u\|_{L^\infty(\partial U)}}{\|u\|_{L^\infty(\RR^n\setminus K)}}
\quad\text{for all }x\in\RR^{n}\setminus K,
\]
where the norms denote the supremum of the continuous function $u$ on the specified sets. By design, $w\leq 0$ on $\partial U$, $w$ is continuous on $\partial U$, and $w\leq 1$ on all of $\RR^n\setminus K$. Hence
\[
\limsup_{y\rightarrow x} w(y) \leq \chi_K(x)\quad\text{for all }x\in\partial(U\setminus K).
\]
Thus, $w(x)\leq \omega^{x}_{U\setminus K}(K)$ for all $x\in U\setminus K$ by the maximum principle for harmonic functions.

We next compare this harmonic measure with the one in the statement of the lemma. Since $U\setminus E\subseteq U\setminus K$ and $K\subseteq E$, any Brownian path that reaches $K$ before exiting $U$ must first leave $U\setminus E$ through $E\cap U$. Therefore,
\[
\omega^{x}_{U\setminus K}(K)\leq \omega^x_{U\setminus E}(E\cap U)
\quad\text{for all }x\in U\setminus E.
\]
Using \eqref{estimate1} to estimate the denominator in the definition of $w$ and \eqref{estimate2}, \eqref{estimate3} to estimate the numerator, we obtain
\[
\omega^{x_0}_{U\setminus E}(U\cap E)\geq
\frac{r^{-(n-2)}(1-g^{-(n-2)})\Net{s}_\infty(E\cap B(x_0,r))}
{4\cdot r^{2-(n-s)}O(n,s;\Delta)}.
\]
Applying Theorem \ref{t:O} to bound $O(n,s;\Delta)$ and rearranging yields \eqref{generic-bourgain}.
\end{proof}

\begin{example}
The phrase ``nearly sharp constant'' in the title of Lemma \ref{bourgain-round} refers to a constant obtained by following the skeleton of Bourgain's original proof rather than to an assertion that the constant in \eqref{generic-bourgain} is optimal among all possible proofs. For example, take $x_0=0$, $r=1/2$, and $s=n$, and suppose that $\Delta$ has initial scale $1$ with $\Site(\Delta)\supset\overline{B(0,1/2)}$. For $0<\eta\ll 1/2$, let
\[
C_\eta=\{(x_1,\dots,x_n)\in\RR^n:x_1>-\eta\text{ and }x_2^2+\cdots+x_n^2<\eta^2\},
\qquad
K=\overline{B(0,1/2)}\setminus C_\eta,
\]
and let $\mu$ be Lebesgue measure restricted to $K$. Then $\mu$ is an $n$-dimensional Frostman measure for $K$ relative to $\Delta$, with $\mu(K)=\Net{n}_\infty(K)$. Thus, we can use $\mu$ in Bourgain's argument. If
$u(z)=\int_K |x-z|^{-(n-2)}\,d\mu(x)$,
then
\[
\|u\|_{L^\infty(\RR^n\setminus K)}
\geq u(0)
=\int_{B(0,1/2)\setminus C_\eta}\frac{dx}{|x|^{n-2}}
\longrightarrow \frac{n\omega_n}{8}
\quad\text{as }\eta\downarrow0.
\]
On the other hand, Theorem \ref{t:O} gives $\|u\|_{L^\infty(\RR^n\setminus K)}\leq O(n,n;\Delta)\leq (1+n^{-6})n\omega_n/8$. Hence the estimate for the Frostman potential used in the proof is nearly attained in this example. A different proof of Lemma \ref{bourgain-round} might conceivably yield a better constant.\end{example}

\section{One-ring alternative}\label{sec:parameters}

Recall from the introduction that $d_n$ is the supremum of the upper Hausdorff dimensions of harmonic measure over bounded domains in $\RR^n$. We follow Bourgain's original strategy for bounding $d_n$ \cite{Bourgain}, as updated in \cite{15-9s}, but introduce two new estimates that substantially strengthen the barrier argument. Lemma \ref{alt-2-lemma} exploits repeated returns of Brownian motion to a separating surface, raising the lower bound on the probability of hitting the boundary inside an annular ring. Lemma \ref{alt-2-lemma-P} then allows us to use just one ``thin ring without holes'' in place of Bourgain's original multi-ring barrier. We also use the covers by cubes in general position and the Bourgain estimate for round snapshots with a nearly sharp constant developed above.

\subsection{Overview}\label{ss:overview}

Fix $n\geq 3$. The model depends on the following parameters:
\begin{itemize}
\item integers $M\geq 3$, $L\geq 2$, with $2L\geq 3M$, and $h\geq n^3$, which control the geometry and scales of the cube partitions described in \S\ref{ss:mha};
\item $0<\rho\leq\rho_0$ and $0<a<1$ that determine the size of an $(n-\rho)$-dimensional hole, where the upper bound on $\rho$ is
\begin{equation}\label{rho0-def}
\rho_0:=\frac{0.2}{n^6\log h};
\end{equation} see \eqref{hole-def};
\item a lower bound $\eta$ for harmonic measure in the absence of a hole, given by
\begin{equation}\label{eta-def}
\eta:=\frac{2a}{n+n^{-5}}(1-2^{-(n-2)});
\end{equation}
see Lemma \ref{local-harmonic-measure}.
\end{itemize}

The following theorem is the main result of this section. It converts any admissible set of parameters into an explicit upper bound for $d_n$.

\begin{theorem}\label{t:bb}
Let $n\geq 3$ and let $0<a<1$. Let $M\geq 3$, $L\geq 2$, and $h\geq n^3$ be integers with $2L\geq 3M$ such that
\begin{equation}\label{k-minimum-lower}
W:=\left(1+\frac{n\,\omega_n}{\omega_{n-1}}\left(\frac{4L}{3M}-1\right)\eta\right)^{-1}
<\left(1-\frac{2}{M}\right)^n=:V,
\end{equation}
and let $\rho\in(0,\rho_0]$ satisfy
\begin{equation}\begin{split}\label{rho-condition}
\left(1-\frac{1}{L}\right)^{n-\rho}
+\left(L^n-(L-1)^n-2^{n-1}\right)\left(\frac{1}{L}\right)^{n-\rho}
+2^{n-1}(2^n-1)\left(\frac{1}{2L}\right)^{n-\rho}&\\
+\left(2^{n-1}h^n-\frac{\omega_n}{2^n}\left(h-2\sqrt n\right)^n\right)
\left(\frac{1}{2Lh}\right)^{n-\rho}
+a\omega_n\left(\frac{1}{4L}\right)^{n-\rho}&\leq1.
\end{split}\end{equation}
Then $d_n\leq n-\lambda\rho/(1+\lambda)$,
where $\lambda>0$ is given by
\begin{equation}\label{lambda-def}
\lambda:=W\log_\tau\frac{W}{V}+(1-W)\log_\tau\frac{1-W}{1-V},
\end{equation}

\begin{equation}\label{tau-def}
\tau:=\left(1-\frac{2}{M}\right)^{n-\rho}
+\big(M^n-(M-2)^n\big)\left(\frac{1}{M}\right)^{n-\rho}
\qquad(\tau>1).
\end{equation}
\end{theorem}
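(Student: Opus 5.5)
We will derive Theorem \ref{t:bb} from the Bourgain-type dimension lemma, Theorem \ref{dim-lemma}, applied to $\mu=\omega^{x}_\Omega$ on a bounded domain $\Omega\subset\RR^n$ with $E=\partial\Omega$. By scaling and countable stability, it suffices to treat cubes $P$ with $\side P\le 1$ that lie far from the pole. The parameters are $\beta=1/M$ (or the smallest ratio occurring in the covers below), $\tau$ as in \eqref{tau-def}, and $\gamma$ chosen later. For each such cube $P$, we must show that condition (I) or condition (II) holds.

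\textbf{Step 1: the two alternatives.} Place an inner concentric cube of relative side $1-2/M$ inside $P$, and surround it by a thin ring in which we look for round holes. Fix the associated partition of $P$: one central cube of side $(1-2/M)\side P$ together with $M^n-(M-2)^n$ boundary cubes of side $\side P/M$. This is the cover whose $(n-\rho)$-sum equals $\tau(\side P)^{n-\rho}$, so it gives the first inequality in (II).

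\emph{Hole case.} Suppose some cube $P'$ of side $\side P/M$ in the ring (in the $L$-scale partition of \S\ref{ss:mha}) contains a ball $B$ with $\Net{n-\rho}_\infty(\partial\Omega\cap B)\le a\,\omega_n r^{n-\rho}$. Here $B$ has radius $r$ comparable to $\side P'/(4L)$ and $\Delta$ is a dyadic net at initial scale $2r/h$ filling $B$ from inside. We then cover $\partial\Omega\cap P'$ by the mixed-scale cover whose terms appear in \eqref{rho-condition}: one large cube of side $(1-1/L)$, the $L$-adic boundary cubes minus $2^{n-1}$ of them, those $2^{n-1}$ cubes split dyadically, the $2^{n-1}h^n$ fine cubes minus the roughly $\omega_n 2^{-n}(h-2\sqrt n)^n$ fine cubes lying inside the ball, and finally the net cover of the hole, which costs $a\omega_n(4L)^{-(n-\rho)}$. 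Then \eqref{rho-condition} says exactly that this cover has $(n-\rho)$-sum at most $(\side P')^{n-\rho}$. We cannot apply this to $P'$ alone, so the hole case must be propagated to $P$: either $P$ itself is arranged to be the cube carrying the hole, or (I) is verified at the scale where the hole sits and the tree construction of Theorem \ref{dim-lemma} is used. I will check which option is consistent with $\beta$.

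\emph{No-hole case.} Every ball in the ring carries net content at least $a\omega_n r^{n-\rho}$. Lemma \ref{bourgain-round} then applies with $g=2$, valid because $\rho\le\rho_0$, and gives probability at least $\eta$ from \eqref{eta-def} that Brownian motion started at the ball's center hits $\partial\Omega$ before leaving a ball of radius $2r$.

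\textbf{Step 2: barrier and $\gamma$.} Using Lemmas \ref{alt-2-lemma} and \ref{alt-2-lemma-P}, we convert this local estimate into a bound on the harmonic measure of the central cube. To do so, integrate the hitting estimate over a separating surface inside the ring. The ratio of sphere to ball areas, $n\omega_n/\omega_{n-1}$, and the ring thickness $4L/(3M)-1$ enter here, and summing over the repeated returns of the Brownian path to the barrier produces the geometric factor
\[
\mu(\text{central cube})\le W\,\mu(P).
\]
Once this is established, the second inequality in (II) is an entropy computation. Put $\theta=\mu(R_0)/\mu(P)\le W<V$, where $V=\vol R_0/\vol P$, and apply Hölder to the boundary cubes as in \eqref{cauchy-schwarz}. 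This gives
\[
\sum_i\mu(R_i)^\alpha(\vol R_i)^{1-\alpha}\le \bigl(\theta^\alpha V^{1-\alpha}+(1-\theta)^\alpha(1-V)^{1-\alpha}\bigr)\mu(P)^\alpha(\vol P)^{1-\alpha}.
\]
The bracket is increasing in $\theta$ on $[0,V]$, so we may take $\theta=W$ and set $\gamma$ equal to the bracket at $\theta=W$.

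\textbf{Step 3: limit $\alpha\to1$.} Define $\lambda_\alpha=-\log_\tau\gamma/(1-\alpha)$. As $\alpha\uparrow1$, we have $-\log\gamma/(1-\alpha)\to W\log(W/V)+(1-W)\log((1-W)/(1-V))$, which is the Kullback--Leibler divergence. Hence $\lambda_\alpha\to\lambda$ as defined in \eqref{lambda-def}, and Theorem \ref{dim-lemma} with a sup over $\alpha$ gives $d_n\le n-\lambda\rho/(1+\lambda)$.

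The main obstacle is Step 2: obtaining precisely $W$ from one ring, including the repeated-returns summation and the geometry of the separating surface. A secondary obstacle is the bookkeeping in the hole case, making sure the hole's cube is genuinely a cube of the tree so that \eqref{rho-condition} yields (I).
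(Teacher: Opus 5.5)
Your Steps 2 and 3 agree with the paper. Lemma \ref{alt-2-lemma-P} gives $\omega(P_0)\le W\omega(P)$ verbatim, Lemma \ref{alt-2-bound} is your H\"older/monotonicity argument, and the limit of $-\log\gamma_\alpha/(1-\alpha)$ as $\alpha\to1$ is the Kullback--Leibler divergence. The paper also uses convexity of $\alpha\mapsto\log\gamma_\alpha$ to get monotonicity, but the limit alone suffices.

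The gap is in Step 1: you never establish condition (I) for the cube $P$ when there is a hole, and your choice of $\beta$ blocks it. In Theorem \ref{dim-lemma}, $\beta$ is an \emph{upper} bound on the child-to-parent side ratio, and any $\beta<1$ is allowed. So it must be the largest ratio used, not the smallest. The paper takes $\beta=1-1/L$, which dominates both $\side P_{\star,x}/\side P=1-1/L$ and $\side P_0/\side P=1-2/M$ (since $2L\ge3M$). With $\beta=1/M$ the collar cover is never admissible, and the ring partition fails for $M>3$. Your fallback of verifying (I) ``at the scale where the hole sits'' and letting the tree absorb it does not fit Theorem \ref{dim-lemma}. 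Its hypothesis is a dichotomy for each individual cube $P$, with covers of all of $E\cap P$.

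The missing idea is that holes are tested at the scale of $P$ itself, on the separating surface $\Gamma_P$, the boundary of the concentric cube of side $(1-\frac{3}{2L})\side P$. For $x\in\Gamma_P$:
\begin{itemize}
\item use the ball $B_{P,x}$ of radius $(1/4L)\side P$;
\item use the net $\Delta_{P,x}$ whose top cubes are the tiny cubes inside $B_{P,x}$;
\item these tiny cubes come from the L-shaped collar around a corner-sharing big cube $P_{\star,x}$ of side $(1-1/L)\side P$ with $x\notin P_{\star,x}$.
\end{itemize}
Then Lemma \ref{l:alt-1-cover} covers all of $E\cap P$ by pairwise disjoint subcubes with $(n-\rho)$-sum at most the left side of \eqref{rho-condition} times $(\side P)^{n-\rho}$. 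That is (I) for $P$. Define the hole with a strict inequality, so that Lemma \ref{pick-disjoint} yields an actual disjoint cover within budget.

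Your version puts a ball of radius about $\side P/(4LM)$ in a ring cube of side $\side P/M$. This is inconsistent with the statement on both sides. First, the terms of \eqref{rho-condition} are normalized to the cube being covered. Second, the no-hole case must be exactly Alternative II for these same balls. Lemmas \ref{local-harmonic-measure} and \ref{alt-2-lemma} produce $W$ from $\widehat B_{P,z}$ of radius $r=(3/4L)\side P$ and the gap $d=(1/M-3/4L)\side P$ to $P_0$, i.e.\ from $d/r=4L/(3M)-1$. So the real obstacle is Step 1, not Step 2.

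Finally, Theorem \ref{dim-lemma} needs every cube of side at most $1$, so ``restricting to cubes far from the pole'' is not enough. Instead, normalize so that $\dist(0,\partial\Omega)>\sqrt n$. Then every such cube with $\omega(P)>0$ excludes the pole, as Lemma \ref{alt-2-lemma} requires, and cubes with $\omega(P)=0$ satisfy (II) trivially.
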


After describing the one-ring alternative in \S\ref{ss:mha}, we establish lemmata in \S\ref{ss:lemmata} and record the proof of Theorem \ref{t:bb} in \S\ref{ss:proof:t:bb}. Suitable choices of parameters are then used to prove Theorem \ref{t:big-dim} in \S\ref{s:punchline}.

\subsection{One-ring alternative}\label{ss:mha}
Let $\Omega\subset\RR^n$ be a bounded domain and set $E:=\partial\Omega$. Thus, $E$ is closed and harmonic measure on $\Omega$ is supported on $E$. Fix a cube $P$ in $\RR^n$ of any type. We examine the geometry of $E$ inside $P$ in order to show that $P$ satisfies condition (I) or (II) of Theorem \ref{dim-lemma}. The model uses two related partitions of $P$, one for each alternative. In \textbf{Alternative I}, the partition is adapted to the presence of a hole in $E$. In \textbf{Alternative II}, the partition is used when no such hole is present.
\begin{figure}[t]
\begin{tikzpicture}[
    scale=1.15,
    every node/.style={font=\small},
    >=stealth
  ]
  \def\M{8}          
  \pgfmathtruncatemacro{\Mm}{\M-1}   
  \fill[blue!8] (0,0) rectangle (\M,\M);          
  \fill[gray!22] (1,1) rectangle (\M-1,\M-1);     
  \foreach \i in {1,...,\Mm}{
    \draw[gray!70,thin] (\i,0) -- (\i,1);
    \draw[gray!70,thin] (\i,\M-1) -- (\i,\M);
  }
  \foreach \i in {1,...,\Mm}{
    \draw[gray!70,thin] (0,\i) -- (1,\i);
    \draw[gray!70,thin] (\M-1,\i) -- (\M,\i);
  }
  \draw[thick] (1,1) rectangle (\M-1,\M-1);   
  \draw[very thick] (0,0) rectangle (\M,\M);  

  \draw[red!55!black,thick] (0.5,0.5) rectangle (\M-0.5,\M-0.5);
  \node[red!55!black] at (\M-.74,\M-2.5) {$\Gamma_P$};

  \fill[blue!8] (0.5-0.35,0.5-0.35) rectangle (0.5+0.35,0.5+0.35); 
  \fill[blue!8] (1.5-0.35,0.5-0.35) rectangle (1.5+0.35,0.5+0.35); 
  \fill[blue!8] (2.5-0.35,0.5-0.35) rectangle (2.5+0.35,0.5+0.35); 
  \fill[blue!8] (0.5-0.35,1.5-0.35) rectangle (0.5+0.35,1.5+0.35); 
  \fill[blue!8] (0.5-0.35,2.5-0.35) rectangle (0.5+0.35,2.5+0.35); 

  \node at (0.5,0.5) {$P_1$};
  \node at (1.5,0.5) {$P_2$};
  \node at (2.5,0.5) {$\cdots$};
  \node at (0.5,1.5) {$P_N$};
  \node at (0.5,2.5) {$\vdots$};
  \node at (\M/2,\M/2) {$P_0$};
  \draw[decorate,decoration={calligraphic brace,amplitude=6pt,mirror,raise=4pt}]
        (0,-0.15) -- (1,-0.15)
        node[midway,below=10pt]{$\tfrac{1}{M}\operatorname{side}P$};
%
\end{tikzpicture}
\caption{Illustrated with $n=2$, $M=8$, and $L=12$: the partition used in \textbf{Alternative II}. The outer children form the ring $R_P$ around the inner child $P_0$, and the separating surface $\Gamma_P$ lies inside the ring; for the parameters shown, it lies midway across $R_P$.}
\label{fig:multi-hole}
\end{figure}%
\subsubsection{First partition: the annular ring}

Fix integers $M\geq 3$ and $L\geq 2$ with $2L\geq 3M$ (independent of $P$). Partition $P$ into subcubes (see Figure \ref{fig:multi-hole}): an \emph{inner child} $P_0$ of side length $(1-2/M)\side P$ and $N=M^n-(M-2)^n$ \emph{outer children} $P_1,\dots,P_N$ of side length $(1/M)\side P$, arranged so that $P_0$ contains the center of $P$ and is surrounded by $P_1,\dots,P_N$. As usual, the children may have different types; see the paragraph above Definition \ref{def:cubical-Hausdorff}. The outer children form a \emph{ring} $R_P=\bigcup_{i=1}^N P_i$ of width $(1/M)\side P$ around the inner child.

Define the \emph{separating surface} $\Gamma_P$ to be the boundary of any concentric subcube of $P$ of side length $(1-3/2L)\side P$. Thus, $\Gamma_P$ lies at distance $(3/4L)\side P$ from $\partial P$ and at distance $(1/M-3/4L)\side P$ from $\partial P_0$. Because $2L\geq 3M$, the first distance is at most the second. Hence $\Gamma_P$ lies inside $R_P$, at least as close to $\partial P$ as to $\partial P_0$. For $x\in\Gamma_P$, set
\[
B_{P,x}:=B(x,(1/4L)\side P),
\qquad
\widehat B_{P,x}:=B(x,(3/4L)\side P).
\]
The two balls are concentric and $B_{P,x}\subset \widehat B_{P,x}\subset R_P$.

\subsubsection{Second partition: the L-shaped collar}

Fix $x\in\Gamma_P$. To define a hole at $x$, we form a second partition of $P$ that depends on $x$. See Figure \ref{fig:x-partition}. It is not too important to remember the names of the various classes of cubes below; the essential information is the size and number of each type, which can be looked up as needed.

Choose a face $F$ of $P$ at distance $(3/4L)\side P$ from $x$ and choose any \emph{big cube} $P_{\star,x}\subset P$ of side length $(1-1/L)\side P$ so that half of the faces of $P_{\star,x}$ are contained in the faces of $P$ (i.e.~$P_{\star,x}$ and $P$ share a corner) and so that the face of $P_{\star,x}$ parallel to $F$ lies at distance $(1/L)\side P$ from $F$. In particular, $x\notin P_{\star,x}$.

\begin{figure}[t]
\begin{tikzpicture}[scale=0.77, baseline=(current bounding box.north), every node/.style={font=\small}, >=stealth]


  \fill[gray!15] (1,0) rectangle (12,11);
  \foreach \j in {0,...,11}{
    \pgfmathtruncatemacro{\parity}{Mod(\j,2)}
    \ifnum\parity=0 \fill[blue!14] (0,\j) rectangle (1,\j+1);
    \else \fill[blue!30] (0,\j) rectangle (1,\j+1); \fi
  }
  \foreach \i in {1,...,11}{
    \pgfmathtruncatemacro{\parity}{Mod(\i+11,2)}
    \ifnum\parity=0 \fill[blue!14] (\i,11) rectangle (\i+1,12);
    \else \fill[blue!30] (\i,11) rectangle (\i+1,12); \fi
  }

  \begin{scope}
    \clip (0,0) -- (1,0) -- (1,11) -- (12,11) -- (12,12) -- (0,12) -- cycle;
    \foreach \i in {0,1,...,12}{
      \draw[black,semithick] (\i,0) -- (\i,12);
      \draw[black,semithick] (0,\i) -- (12,\i);
    }
  \end{scope}

  \draw[black!55,thin] (0.5,7) -- (0.5,9);
  \draw[black!55,thin] (0,7.5) -- (1,7.5);
  \draw[black!55,thin] (0,8.5) -- (1,8.5);

  \draw[gray!55,thin] (1.5,1.5) rectangle (10.5,10.5);

  \draw[black,thick] (1,0) -- (1,11) -- (12,11);

  \draw[very thick] (0,0) rectangle (12,12);

  \draw[red!55!black,thick] (0.75,0.75) rectangle (11.25,11.25);

  \fill[orange!12] (0.75,8) circle (0.25);
  \draw[orange!75!black,thick] (0.75,8) circle (0.25);

  \fill[black] (0.75,8) circle (0.045);
  \node[left=2pt] at (0.74,8.3) {$x$};

  \node[left=6pt] at (0,8) {$F$};

  \node at (6.5,5.55) {$P_{\star,x}$};

  \node[align=left] at (2.5,8) {$r=\tfrac{1}{4L}\operatorname{side}P$};

  \node[red!55!black] at (10.85,8) {$\Gamma_{P}$};

  \draw[decorate,decoration={calligraphic brace,amplitude=6pt,mirror,raise=4pt}]
        (0,-0.15) -- (1,-0.15)
        node[midway,below=10pt]{$\tfrac{1}{L}\operatorname{side}P$};
\end{tikzpicture}
\begin{tikzpicture}[scale=2.3, baseline={([yshift=0.12cm]current bounding box.north)}, every node/.style={font=\normalsize}, >=stealth]

  \clip (-0.25,6.75) rectangle (2.25,9.25);

  \fill[gray!15] (1,0) rectangle (12,11);
  \draw[gray!55,thin] (1.5,6.75) -- (1.5,9.25);
  \foreach \j in {0,...,11}{
    \pgfmathtruncatemacro{\parity}{Mod(\j,2)}
    \ifnum\parity=0 \fill[blue!14] (0,\j) rectangle (1,\j+1);
    \else \fill[blue!30] (0,\j) rectangle (1,\j+1); \fi
  }

  \begin{scope}
    \clip (0,0) -- (1,0) -- (1,11) -- (12,11) -- (12,12) -- (0,12) -- cycle;
    \foreach \i in {0,1,...,12}{
      \draw[black,semithick] (\i,0) -- (\i,12);
      \draw[black,semithick] (0,\i) -- (12,\i);
    }
  \end{scope}

  \draw[black!55,thin] (0.5,7) -- (0.5,9);
  \draw[black!55,thin] (0,7.5) -- (1,7.5);
  \draw[black!55,thin] (0,8.5) -- (1,8.5);

  \draw[black,thick] (1,0) -- (1,11) -- (12,11);

  \draw[very thick] (0,0) rectangle (12,12);

  \draw[red!55!black,thick] (0.75,0.75) rectangle (11.25,11.25);

    \fill[blue!60] (0.50,7.50) rectangle (0.55,7.55);
    \fill[blue!29] (0.50,7.55) rectangle (0.55,7.60);
    \fill[blue!60] (0.50,7.60) rectangle (0.55,7.65);
    \fill[blue!29] (0.50,7.65) rectangle (0.55,7.70);
    \fill[blue!60] (0.50,7.70) rectangle (0.55,7.75);
    \fill[blue!29] (0.50,7.75) rectangle (0.55,7.80);
    \fill[blue!60] (0.50,7.80) rectangle (0.55,7.85);
    \fill[blue!29] (0.50,7.85) rectangle (0.55,7.90);
    \fill[blue!60] (0.50,7.90) rectangle (0.55,7.95);
    \fill[blue!29] (0.50,7.95) rectangle (0.55,8.00);
    \fill[blue!60] (0.50,8.00) rectangle (0.55,8.05);
    \fill[blue!29] (0.50,8.05) rectangle (0.55,8.10);
    \fill[blue!60] (0.50,8.10) rectangle (0.55,8.15);
    \fill[blue!29] (0.50,8.15) rectangle (0.55,8.20);
    \fill[blue!60] (0.50,8.20) rectangle (0.55,8.25);
    \fill[blue!29] (0.50,8.25) rectangle (0.55,8.30);
    \fill[blue!60] (0.50,8.30) rectangle (0.55,8.35);
    \fill[blue!29] (0.50,8.35) rectangle (0.55,8.40);
    \fill[blue!60] (0.50,8.40) rectangle (0.55,8.45);
    \fill[blue!29] (0.50,8.45) rectangle (0.55,8.50);
    \fill[blue!29] (0.55,7.50) rectangle (0.60,7.55);
    \fill[blue!60] (0.55,7.55) rectangle (0.60,7.60);
    \fill[blue!29] (0.55,7.60) rectangle (0.60,7.65);
    \fill[blue!60] (0.55,7.65) rectangle (0.60,7.70);
    \fill[blue!29] (0.55,7.70) rectangle (0.60,7.75);
    \fill[blue!60] (0.55,7.75) rectangle (0.60,7.80);
    \fill[blue!29] (0.55,7.80) rectangle (0.60,7.85);
    \fill[blue!60] (0.55,8.15) rectangle (0.60,8.20);
    \fill[blue!29] (0.55,8.20) rectangle (0.60,8.25);
    \fill[blue!60] (0.55,8.25) rectangle (0.60,8.30);
    \fill[blue!29] (0.55,8.30) rectangle (0.60,8.35);
    \fill[blue!60] (0.55,8.35) rectangle (0.60,8.40);
    \fill[blue!29] (0.55,8.40) rectangle (0.60,8.45);
    \fill[blue!60] (0.55,8.45) rectangle (0.60,8.50);
    \fill[blue!60] (0.60,7.50) rectangle (0.65,7.55);
    \fill[blue!29] (0.60,7.55) rectangle (0.65,7.60);
    \fill[blue!60] (0.60,7.60) rectangle (0.65,7.65);
    \fill[blue!29] (0.60,7.65) rectangle (0.65,7.70);
    \fill[blue!60] (0.60,7.70) rectangle (0.65,7.75);
    \fill[blue!29] (0.60,7.75) rectangle (0.65,7.80);
    \fill[blue!60] (0.60,8.20) rectangle (0.65,8.25);
    \fill[blue!29] (0.60,8.25) rectangle (0.65,8.30);
    \fill[blue!60] (0.60,8.30) rectangle (0.65,8.35);
    \fill[blue!29] (0.60,8.35) rectangle (0.65,8.40);
    \fill[blue!60] (0.60,8.40) rectangle (0.65,8.45);
    \fill[blue!29] (0.60,8.45) rectangle (0.65,8.50);
    \fill[blue!29] (0.65,7.50) rectangle (0.70,7.55);
    \fill[blue!60] (0.65,7.55) rectangle (0.70,7.60);
    \fill[blue!29] (0.65,7.60) rectangle (0.70,7.65);
    \fill[blue!60] (0.65,7.65) rectangle (0.70,7.70);
    \fill[blue!29] (0.65,7.70) rectangle (0.70,7.75);
    \fill[blue!60] (0.65,7.75) rectangle (0.70,7.80);
    \fill[blue!29] (0.65,8.20) rectangle (0.70,8.25);
    \fill[blue!60] (0.65,8.25) rectangle (0.70,8.30);
    \fill[blue!29] (0.65,8.30) rectangle (0.70,8.35);
    \fill[blue!60] (0.65,8.35) rectangle (0.70,8.40);
    \fill[blue!29] (0.65,8.40) rectangle (0.70,8.45);
    \fill[blue!60] (0.65,8.45) rectangle (0.70,8.50);
    \fill[blue!60] (0.70,7.50) rectangle (0.75,7.55);
    \fill[blue!29] (0.70,7.55) rectangle (0.75,7.60);
    \fill[blue!60] (0.70,7.60) rectangle (0.75,7.65);
    \fill[blue!29] (0.70,7.65) rectangle (0.75,7.70);
    \fill[blue!60] (0.70,7.70) rectangle (0.75,7.75);
    \fill[blue!29] (0.70,7.75) rectangle (0.75,7.80);
    \fill[blue!60] (0.70,8.20) rectangle (0.75,8.25);
    \fill[blue!29] (0.70,8.25) rectangle (0.75,8.30);
    \fill[blue!60] (0.70,8.30) rectangle (0.75,8.35);
    \fill[blue!29] (0.70,8.35) rectangle (0.75,8.40);
    \fill[blue!60] (0.70,8.40) rectangle (0.75,8.45);
    \fill[blue!29] (0.70,8.45) rectangle (0.75,8.50);
    \fill[blue!29] (0.75,7.50) rectangle (0.80,7.55);
    \fill[blue!60] (0.75,7.55) rectangle (0.80,7.60);
    \fill[blue!29] (0.75,7.60) rectangle (0.80,7.65);
    \fill[blue!60] (0.75,7.65) rectangle (0.80,7.70);
    \fill[blue!29] (0.75,7.70) rectangle (0.80,7.75);
    \fill[blue!60] (0.75,7.75) rectangle (0.80,7.80);
    \fill[blue!29] (0.75,8.20) rectangle (0.80,8.25);
    \fill[blue!60] (0.75,8.25) rectangle (0.80,8.30);
    \fill[blue!29] (0.75,8.30) rectangle (0.80,8.35);
    \fill[blue!60] (0.75,8.35) rectangle (0.80,8.40);
    \fill[blue!29] (0.75,8.40) rectangle (0.80,8.45);
    \fill[blue!60] (0.75,8.45) rectangle (0.80,8.50);
    \fill[blue!60] (0.80,7.50) rectangle (0.85,7.55);
    \fill[blue!29] (0.80,7.55) rectangle (0.85,7.60);
    \fill[blue!60] (0.80,7.60) rectangle (0.85,7.65);
    \fill[blue!29] (0.80,7.65) rectangle (0.85,7.70);
    \fill[blue!60] (0.80,7.70) rectangle (0.85,7.75);
    \fill[blue!29] (0.80,7.75) rectangle (0.85,7.80);
    \fill[blue!60] (0.80,8.20) rectangle (0.85,8.25);
    \fill[blue!29] (0.80,8.25) rectangle (0.85,8.30);
    \fill[blue!60] (0.80,8.30) rectangle (0.85,8.35);
    \fill[blue!29] (0.80,8.35) rectangle (0.85,8.40);
    \fill[blue!60] (0.80,8.40) rectangle (0.85,8.45);
    \fill[blue!29] (0.80,8.45) rectangle (0.85,8.50);
    \fill[blue!29] (0.85,7.50) rectangle (0.90,7.55);
    \fill[blue!60] (0.85,7.55) rectangle (0.90,7.60);
    \fill[blue!29] (0.85,7.60) rectangle (0.90,7.65);
    \fill[blue!60] (0.85,7.65) rectangle (0.90,7.70);
    \fill[blue!29] (0.85,7.70) rectangle (0.90,7.75);
    \fill[blue!60] (0.85,7.75) rectangle (0.90,7.80);
    \fill[blue!29] (0.85,8.20) rectangle (0.90,8.25);
    \fill[blue!60] (0.85,8.25) rectangle (0.90,8.30);
    \fill[blue!29] (0.85,8.30) rectangle (0.90,8.35);
    \fill[blue!60] (0.85,8.35) rectangle (0.90,8.40);
    \fill[blue!29] (0.85,8.40) rectangle (0.90,8.45);
    \fill[blue!60] (0.85,8.45) rectangle (0.90,8.50);
    \fill[blue!60] (0.90,7.50) rectangle (0.95,7.55);
    \fill[blue!29] (0.90,7.55) rectangle (0.95,7.60);
    \fill[blue!60] (0.90,7.60) rectangle (0.95,7.65);
    \fill[blue!29] (0.90,7.65) rectangle (0.95,7.70);
    \fill[blue!60] (0.90,7.70) rectangle (0.95,7.75);
    \fill[blue!29] (0.90,7.75) rectangle (0.95,7.80);
    \fill[blue!60] (0.90,7.80) rectangle (0.95,7.85);
    \fill[blue!29] (0.90,8.15) rectangle (0.95,8.20);
    \fill[blue!60] (0.90,8.20) rectangle (0.95,8.25);
    \fill[blue!29] (0.90,8.25) rectangle (0.95,8.30);
    \fill[blue!60] (0.90,8.30) rectangle (0.95,8.35);
    \fill[blue!29] (0.90,8.35) rectangle (0.95,8.40);
    \fill[blue!60] (0.90,8.40) rectangle (0.95,8.45);
    \fill[blue!29] (0.90,8.45) rectangle (0.95,8.50);
    \fill[blue!29] (0.95,7.50) rectangle (1.00,7.55);
    \fill[blue!60] (0.95,7.55) rectangle (1.00,7.60);
    \fill[blue!29] (0.95,7.60) rectangle (1.00,7.65);
    \fill[blue!60] (0.95,7.65) rectangle (1.00,7.70);
    \fill[blue!29] (0.95,7.70) rectangle (1.00,7.75);
    \fill[blue!60] (0.95,7.75) rectangle (1.00,7.80);
    \fill[blue!29] (0.95,7.80) rectangle (1.00,7.85);
    \fill[blue!60] (0.95,7.85) rectangle (1.00,7.90);
    \fill[blue!29] (0.95,7.90) rectangle (1.00,7.95);
    \fill[blue!60] (0.95,7.95) rectangle (1.00,8.00);
    \fill[blue!29] (0.95,8.00) rectangle (1.00,8.05);
    \fill[blue!60] (0.95,8.05) rectangle (1.00,8.10);
    \fill[blue!29] (0.95,8.10) rectangle (1.00,8.15);
    \fill[blue!60] (0.95,8.15) rectangle (1.00,8.20);
    \fill[blue!29] (0.95,8.20) rectangle (1.00,8.25);
    \fill[blue!60] (0.95,8.25) rectangle (1.00,8.30);
    \fill[blue!29] (0.95,8.30) rectangle (1.00,8.35);
    \fill[blue!60] (0.95,8.35) rectangle (1.00,8.40);
    \fill[blue!29] (0.95,8.40) rectangle (1.00,8.45);
    \fill[blue!60] (0.95,8.45) rectangle (1.00,8.50);
    \fill[red!80!black] (0.55,7.85) rectangle (0.60,7.90);
    \fill[orange!60] (0.55,7.90) rectangle (0.60,7.95);
    \fill[red!80!black] (0.55,7.95) rectangle (0.60,8.00);
    \fill[orange!60] (0.55,8.00) rectangle (0.60,8.05);
    \fill[red!80!black] (0.55,8.05) rectangle (0.60,8.10);
    \fill[orange!60] (0.55,8.10) rectangle (0.60,8.15);
    \fill[red!80!black] (0.60,7.80) rectangle (0.65,7.85);
    \fill[orange!60] (0.60,7.85) rectangle (0.65,7.90);
    \fill[red!80!black] (0.60,7.90) rectangle (0.65,7.95);
    \fill[orange!60] (0.60,7.95) rectangle (0.65,8.00);
    \fill[red!80!black] (0.60,8.00) rectangle (0.65,8.05);
    \fill[orange!60] (0.60,8.05) rectangle (0.65,8.10);
    \fill[red!80!black] (0.60,8.10) rectangle (0.65,8.15);
    \fill[orange!60] (0.60,8.15) rectangle (0.65,8.20);
    \fill[orange!60] (0.65,7.80) rectangle (0.70,7.85);
    \fill[red!80!black] (0.65,7.85) rectangle (0.70,7.90);
    \fill[orange!60] (0.65,7.90) rectangle (0.70,7.95);
    \fill[red!80!black] (0.65,7.95) rectangle (0.70,8.00);
    \fill[orange!60] (0.65,8.00) rectangle (0.70,8.05);
    \fill[red!80!black] (0.65,8.05) rectangle (0.70,8.10);
    \fill[orange!60] (0.65,8.10) rectangle (0.70,8.15);
    \fill[red!80!black] (0.65,8.15) rectangle (0.70,8.20);
    \fill[red!80!black] (0.70,7.80) rectangle (0.75,7.85);
    \fill[orange!60] (0.70,7.85) rectangle (0.75,7.90);
    \fill[red!80!black] (0.70,7.90) rectangle (0.75,7.95);
    \fill[orange!60] (0.70,7.95) rectangle (0.75,8.00);
    \fill[red!80!black] (0.70,8.00) rectangle (0.75,8.05);
    \fill[orange!60] (0.70,8.05) rectangle (0.75,8.10);
    \fill[red!80!black] (0.70,8.10) rectangle (0.75,8.15);
    \fill[orange!60] (0.70,8.15) rectangle (0.75,8.20);
    \fill[orange!60] (0.75,7.80) rectangle (0.80,7.85);
    \fill[red!80!black] (0.75,7.85) rectangle (0.80,7.90);
    \fill[orange!60] (0.75,7.90) rectangle (0.80,7.95);
    \fill[red!80!black] (0.75,7.95) rectangle (0.80,8.00);
    \fill[orange!60] (0.75,8.00) rectangle (0.80,8.05);
    \fill[red!80!black] (0.75,8.05) rectangle (0.80,8.10);
    \fill[orange!60] (0.75,8.10) rectangle (0.80,8.15);
    \fill[red!80!black] (0.75,8.15) rectangle (0.80,8.20);
    \fill[red!80!black] (0.80,7.80) rectangle (0.85,7.85);
    \fill[orange!60] (0.80,7.85) rectangle (0.85,7.90);
    \fill[red!80!black] (0.80,7.90) rectangle (0.85,7.95);
    \fill[orange!60] (0.80,7.95) rectangle (0.85,8.00);
    \fill[red!80!black] (0.80,8.00) rectangle (0.85,8.05);
    \fill[orange!60] (0.80,8.05) rectangle (0.85,8.10);
    \fill[red!80!black] (0.80,8.10) rectangle (0.85,8.15);
    \fill[orange!60] (0.80,8.15) rectangle (0.85,8.20);
    \fill[orange!60] (0.85,7.80) rectangle (0.90,7.85);
    \fill[red!80!black] (0.85,7.85) rectangle (0.90,7.90);
    \fill[orange!60] (0.85,7.90) rectangle (0.90,7.95);
    \fill[red!80!black] (0.85,7.95) rectangle (0.90,8.00);
    \fill[orange!60] (0.85,8.00) rectangle (0.90,8.05);
    \fill[red!80!black] (0.85,8.05) rectangle (0.90,8.10);
    \fill[orange!60] (0.85,8.10) rectangle (0.90,8.15);
    \fill[red!80!black] (0.85,8.15) rectangle (0.90,8.20);
    \fill[orange!60] (0.90,7.85) rectangle (0.95,7.90);
    \fill[red!80!black] (0.90,7.90) rectangle (0.95,7.95);
    \fill[orange!60] (0.90,7.95) rectangle (0.95,8.00);
    \fill[red!80!black] (0.90,8.00) rectangle (0.95,8.05);
    \fill[orange!60] (0.90,8.05) rectangle (0.95,8.10);
    \fill[red!80!black] (0.90,8.10) rectangle (0.95,8.15);
  \begin{scope}
    \clip (0.5,7.5) rectangle (1.0,8.5);
    \foreach \i in {0.5,0.55,...,1.0}{
      \draw[black!45,very thin] (\i,7.5) -- (\i,8.5);
    }
    \foreach \j in {7.5,7.55,...,8.5}{
      \draw[black!45,very thin] (0.5,\j) -- (1.0,\j);
    }
  \end{scope}

  \draw[black!75,thick] (0.5,7.5) rectangle (1.0,8.5);

  \draw[orange!20,thick] (0.75,8) circle (0.25);


  \node[red!70!black] at (1.2,8) {$\Delta_{P,x}$};


\end{tikzpicture}
\caption{Illustrated with $n=2$, $M=8$, $L=12$, and $h=10$: the $x$-dependent partition used in \textbf{Alternative I}. The `L'-shaped collar is divided into medium cubes; only those near $B_{P,x}$ are subdivided into little cubes, and only the little cubes covering $B_{P,x}$ are refined into tiny cubes. The tiny cubes contained in $B_{P,x}$ are the top cubes of $\Delta_{P,x}$.}\label{fig:x-partition}
\end{figure}

Next, partition the `L'-shaped \emph{collar} $P\setminus P_{\star,x}$ into $L^n-(L-1)^n$ \emph{medium cubes} of side length $(1/L)\side P$. Since $x$ lies at distance $(3/4L)\side P$ from $F$, the points of $B_{P,x}$ lie at distances between $(1/2L)\side P$ and $(1/L)\side P$ from $F$. Thus, $B_{P,x}$ is contained in the union of the medium cubes whose closures meet $F$. These cubes vary only in the $n-1$ coordinate directions parallel to $F$. Since the diameter of $B_{P,x}$ is half the side length of a medium cube, it can meet at most two such cubes in each of these directions, and hence at most $2^{n-1}$ medium cubes in total. Consequently, we may choose a family $\mathcal M_{P,x}$ of exactly $2^{n-1}$ medium cubes whose union contains $B_{P,x}$.

Partition each cube in $\mathcal M_{P,x}$ into $2^n$ \emph{little cubes} of side length $(1/2L)\side P$. Since the points of $B_{P,x}$ lie at distances between $(1/2L)\side P$ and $(1/L)\side P$ from $F$, the ball is contained in the union of the little cubes farthest from $F$ within the cubes of $\mathcal M_{P,x}$. These little cubes vary only in the $n-1$ coordinate directions parallel to $F$. Since the diameter of $B_{P,x}$ equals the side length of a little cube, it can meet at most two such cubes in each of these directions, and hence at most $2^{n-1}$ little cubes in total. Choose a family $\mathcal L_{P,x}$ of exactly $2^{n-1}$ little cubes whose union contains $B_{P,x}$. The cubes in $\mathcal L_{P,x}$ need not belong to the same medium cube.

We now refine the little cubes in $\mathcal L_{P,x}$. Let $h\geq n^3$ be an integer independent of $P$ and $x$, and partition each cube in $\mathcal L_{P,x}$ into \emph{tiny cubes} of side length $h^{-1}\diam B_{P,x}=(1/2Lh)\side P$. Let $\mathcal T_{P,x}$ denote the family of tiny cubes contained in $B_{P,x}$. Because $\overline{B_{P,x}}\subset\interior P$, we may reassign boundaries of the big, medium, little, and tiny cubes without altering the boundary of $P$ so that $\bigcup\mathcal T_{P,x}$ is compact. Choose any dyadic net $\Delta_{P,x}$ with initial scale $(1/2Lh)\side P$ whose top cubes are $\mathcal T_{P,x}$; see Definition \ref{sites-and-nets}. For emphasis, we note that $\Site(\Delta_{P,x})=\bigcup\mathcal T_{P,x}$ is compact.

\subsubsection{Holes}

Let $0<a<1$ be a model parameter independent of $P$ and $x$. We say that there is a \emph{hole} in $P$ at $x\in\Gamma_P$ if
\begin{equation}\label{hole-def}
\Net{n-\rho}_\infty(E)=\Net{n-\rho}_\infty(\partial\Omega\cap \textstyle\bigcup\mathcal T_{P,x})
<a\cdot\omega_n\big((1/4L)\side P\big)^{n-\rho},
\end{equation}
where $\Delta=\Delta_{P,x}$.

We say that $P$ satisfies \textbf{Alternative I} if there is a hole in $P$ at some $x\in\Gamma_P$. The existence of such a hole will allow us to construct an efficient $(n-\rho)$-dimensional cover of $E\cap P$. Otherwise, $P$ satisfies \textbf{Alternative II}: there is no hole in $P$ at any $x\in\Gamma_P$. The absence of holes gives a uniformly positive probability of hitting $E$ locally along the separating surface and ultimately limits how much harmonic measure can reach the inner child $P_0$. Every cube $P$ satisfies exactly one of the two alternatives.

\subsection{Lemmata}\label{ss:lemmata}

\begin{lemma} \label{l:alt-1-cover} Suppose that $P$ satisfies \textbf{Alternative I}. There is a covering of $E\cap P$ by a set $\mathcal{Q}_P$ of pairwise disjoint cubes with $\side Q\leq (1-1/L)\side P$ for all $Q\in\mathcal{Q}_P$ and
\[
\sum_{Q\in\mathcal{Q}_P}(\side Q)^{n-\rho}
\leq \left(\text{left hand side of }\eqref{rho-condition}\right)\cdot(\side P)^{n-\rho}.
\]
\end{lemma}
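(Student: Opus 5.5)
Fix $x\in\Gamma_P$ at which $P$ has a hole in the sense of \eqref{hole-def}, and use the second ($x$-dependent) partition of $P$ to build $\mathcal Q_P$. The five terms on the left of \eqref{rho-condition} suggest the construction: one term per class of cubes.

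\textbf{Step 1: uniform cubes.} Put into $\mathcal Q_P$ the following cubes, whose union is $P\setminus\bigcup\mathcal T_{P,x}$:
\begin{itemize}
\item the big cube $P_{\star,x}$, of side $(1-1/L)\side P$, contributing $(1-1/L)^{n-\rho}$;
\item the $L^n-(L-1)^n-2^{n-1}$ medium cubes of the collar not in $\mathcal M_{P,x}$;
\item the $2^{n-1}\cdot 2^n-2^{n-1}=2^{n-1}(2^n-1)$ little cubes in the cubes of $\mathcal M_{P,x}$ that are not in $\mathcal L_{P,x}$;
\item the tiny cubes inside $\bigcup\mathcal L_{P,x}$ that are not in $\mathcal T_{P,x}$.
\end{itemize}
All of these are pairwise disjoint, with sides at most $(1-1/L)\side P$, since $L\ge2$ makes the big cube the largest. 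They contribute the first three terms of \eqref{rho-condition} exactly, after factoring out $(\side P)^{n-\rho}$.

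\textbf{Step 2: counting tiny cubes outside the ball.} Normalize $\side P=1$. There are $2^{n-1}h^n$ tiny cubes in $\bigcup\mathcal L_{P,x}$, each of side $1/(2Lh)$ and diameter $\sqrt n/(2Lh)$. Any tiny cube meeting $B(x,(1/4L)-\sqrt n/(2Lh))$ lies inside $B_{P,x}$, so it belongs to $\mathcal T_{P,x}$. Hence $\#\mathcal T_{P,x}$ is at least the number of grid cubes meeting that smaller ball. Those cubes cover it, so by volume
\[
\#\mathcal T_{P,x}\ge \frac{\omega_n\big((1/4L)(1-2\sqrt n/h)\big)^n}{(1/(2Lh))^n}=\frac{\omega_n}{2^n}(h-2\sqrt n)^n .
\]
Here $h\ge n^3$ guarantees $h>2\sqrt n$. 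Consequently the number of tiny cubes not in $\mathcal T_{P,x}$ is at most $2^{n-1}h^n-\frac{\omega_n}{2^n}(h-2\sqrt n)^n$, which gives the fourth term of \eqref{rho-condition}.

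\textbf{Step 3: the hole.} It remains to cover $E\cap\bigcup\mathcal T_{P,x}=E\cap\Site(\Delta_{P,x})$. By \eqref{hole-def} and Lemma~\ref{pick-disjoint}, there are pairwise disjoint cubes of $\Delta_{P,x}$, contained in $\Site(\Delta_{P,x})$, which cover this set and satisfy
\[
\sum(\side Q)^{n-\rho}<a\,\omega_n(1/4L)^{n-\rho}.
\]
This is the fifth term. These cubes lie in the tiny cubes of $\mathcal T_{P,x}$, so they are disjoint from every cube chosen in Step 1, and their sides are small. Together the families cover $E\cap P$, since they cover all of $P$ apart from the part of $\Site(\Delta_{P,x})$ avoiding $E$.

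\textbf{Main obstacle.} The delicate point is the consistency of types and boundaries. The boundary reassignment made so that $\Site(\Delta_{P,x})$ is compact must leave the Step 1 cubes a genuine partition of $P\setminus\bigcup\mathcal T_{P,x}$. I would handle this by choosing the reassignment, as in the construction, to preserve a partition of $P$.
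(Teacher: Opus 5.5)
Your proposal is correct and follows the paper's proof essentially step for step: the same five classes of cubes produce the five terms of \eqref{rho-condition}, namely the big cube, the medium cubes outside $\mathcal M_{P,x}$, the little cubes outside $\mathcal L_{P,x}$, the tiny cubes not contained in $B_{P,x}$, and a disjoint net cover from Lemma~\ref{pick-disjoint} obtained from the strict inequality in \eqref{hole-def}. The only cosmetic difference is in your Step 2: you bound $\#\mathcal T_{P,x}$ from below using the tiny cubes that meet $B(x,\frac{1}{4L}-\frac{\sqrt n}{2Lh})$, whereas the paper bounds the complementary count from above by observing that every tiny cube not contained in $B_{P,x}$ lies in $\bigcup\mathcal L_{P,x}\setminus B(x,\frac{1}{4L}-\frac{\sqrt n}{2Lh})$ and dividing volumes, which is the same estimate.
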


\begin{proof}
Without loss of generality, we may assume that $\side P=1$. Because $P$ satisfies \textbf{Alternative I}, there exists a hole in $P$ at some $x\in\Gamma_P$. Consider the associated partition of $P$ described in Figure \ref{fig:x-partition}. In the collection $\mathcal{Q}_P$, we include the big cube $P_{\star,x}$, the medium cubes in $P\setminus P_{\star,x}$ that do not belong to $\mathcal{M}_{P,x}$, the little cubes contained in $\bigcup\mathcal{M}_{P,x}\setminus\bigcup\mathcal{L}_{P,x}$, and the tiny cubes in $\mathcal{L}_{P,x}$ that are not contained in $B_{P,x}$. There are $L^n-(L-1)^n-2^{n-1}$ such medium cubes and $2^{n-1}(2^n-1)$ such little cubes. Thus, the big cube, the medium cubes, and the little cubes in $\mathcal{Q}_P$ contribute the first three terms in \eqref{rho-condition} to its $(n-\rho)$-dimensional cubical content. Now, if a tiny cube $T$ is not contained in $B_{P,x}$, then $T$ is contained in $\bigcup\mathcal{L}_{P,x}\setminus B(x,1/(4L)-\sqrt n/(2Lh))$. Dividing the volume of this set by the volume of a tiny cube shows that the number of such tiny cubes is at most $2^{n-1}h^n-(\omega_n/2^n)(h-2\sqrt n)^n$. Hence these tiny cubes contribute at most the fourth term in \eqref{rho-condition} to the $(n-\rho)$-dimensional cubical content of $\mathcal{Q}_P$. Finally, since $\Net{n-\rho}_\infty(E)$ is strictly less than the fifth term in \eqref{rho-condition} by the definition of a hole \eqref{hole-def}, Lemma \ref{pick-disjoint} applied with $s=n-\rho$ furnishes countably many pairwise disjoint cubes contained in $\Site(\Delta_{P,x})=\bigcup\mathcal{T}_{P,x}$ that cover $E\cap\Site(\Delta_{P,x})$ and whose total $(n-\rho)$-dimensional cubical content is at most the fifth term; these are the final cubes that we include in $\mathcal{Q}_P$. The collection $\mathcal{Q}_P$ has the desired properties: its cubes are pairwise disjoint, have side length no greater than that of the big cube, cover $E\cap P$, and satisfy the asserted estimate.
\end{proof}

\begin{lemma}[locally large harmonic measure] \label{local-harmonic-measure} Suppose that $P$ satisfies \textbf{Alternative II}. For all $x\in\Gamma_P\cap \Omega$, we have $\widehat B_{P,x}\subset R_P$ and \begin{equation}\label{eta-est} \omega^x_{\widehat B_{P,x}\setminus E}(E\cap \widehat B_{P,x}) \geq \eta.\end{equation}\end{lemma}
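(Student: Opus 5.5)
The plan is to apply Lemma~\ref{bourgain-round} with $U=\widehat B_{P,x}$, $x_0=x$, $r=(1/4L)\side P$, the closed set $E=\partial\Omega$, $s=n-\rho$, and the net $\Delta=\Delta_{P,x}$. Since there is no hole at $x$, we have $\Net{n-\rho}_\infty(E\cap B_{P,x})\geq a\,\omega_n r^{n-\rho}$, and the lemma converts this lower bound on content into the bound $\eta$ on harmonic measure.

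First, the containment $\widehat B_{P,x}\subset R_P$ is geometric. The point $x\in\Gamma_P$ lies at distance $(3/4L)\side P$ from $\partial P$, so $\widehat B_{P,x}\subset P$. It also lies at distance $(1/M-3/4L)\side P\geq(3/4L)\side P$ from $\partial P_0$; the inequality $1/M\geq 3/2L$ follows from $2L\geq 3M$. Hence $\widehat B_{P,x}$ misses $P_0$, and openness of the ball handles the boundary cases. This was already recorded when $\Gamma_P$ was defined.

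Next, check the hypotheses of Lemma~\ref{bourgain-round}.
\begin{itemize}
\item $U=\widehat B_{P,x}$ is a bounded domain, and $B(x,r)\subset U$.
\item $\gap(B(x,r),\partial U)=3r-r=2r$, so $g=2$. This produces the factor $1-2^{-(n-2)}$ in \eqref{eta-def}.
\item $\Delta_{P,x}$ is a dyadic net with initial scale $(1/2Lh)\side P=2r\cdot h^{-1}$, exactly as the lemma requires.
\item $\Site(\Delta_{P,x})$ is compact by construction, so $\Site(\Delta)\cap\overline{B(x,r)}$ is compact.
\item $x\in\Omega$ gives $x\notin E$.
\item $s=n-\rho$ with $\rho\leq\rho_0=0.2/(n^6\log h)$, which is the range of $s$ the lemma allows.
\end{itemize}
The lemma then yields
\[
\omega^{x}_{\widehat B_{P,x}\setminus E}(E\cap\widehat B_{P,x})\geq\frac{2}{n}\cdot\frac{1-2^{-(n-2)}}{1+n^{-6}}\cdot\frac{\Net{n-\rho}_\infty(E\cap B_{P,x})}{\omega_n r^{n-\rho}}.
\]

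It remains to identify $\Net{n-\rho}_\infty(E\cap B_{P,x})$ with the quantity in \eqref{hole-def}. The net content only sees $\Site(\Delta)=\bigcup\mathcal T_{P,x}\subset B_{P,x}$, so $\Net{s}_\infty(E\cap B_{P,x})=\Net{s}_\infty(E)$. The absence of a hole then gives a content of at least $a\,\omega_n r^{n-\rho}$. Substituting this, and using $\frac{2}{n(1+n^{-6})}=\frac{2}{n+n^{-5}}$, gives exactly $\eta$.

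The main point to watch is this last identification. The net site is the union of the tiny cubes inside the ball, not the whole ball, so one must confirm that the lemma's right-hand side is the net content of $E$ intersected with the site. It is: the lemma's proof takes $K=E\cap\overline{B}\cap\Site(\Delta)$. Beyond that, the only remaining checks are the routine normalization of $g$ and the scale of the net.
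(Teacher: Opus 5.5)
Your proposal is correct and matches the paper's proof: apply Lemma~\ref{bourgain-round} with $U=\widehat B_{P,x}$, $r=(1/4L)\side P$, $g=2$, $s=n-\rho$ and $\Delta=\Delta_{P,x}$, then insert the no-hole inequality (the reverse of \eqref{hole-def}) to obtain $\eta$. Your extra checks are all valid and only make explicit what the paper leaves implicit: $x\notin E$ because $\Omega$ is open, $\widehat B_{P,x}\subset R_P$ follows from $2L\geq 3M$, and $\Net{s}_\infty(E\cap B_{P,x})=\Net{s}_\infty(E)$ because $\Site(\Delta_{P,x})\subset B_{P,x}$.
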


\begin{proof} Let $x\in\Gamma_P\cap\Omega$. The inclusion $\widehat B_{P,x}\subset R_P$ was noted in \S\ref{ss:mha}. Assign $s=n-\rho$, $U=\widehat B_{P,x}$, $r=(1/4L)\side P$, $g=2$, and $\Delta=\Delta_{P,x}$. Then $B(x,r)=B_{P,x}$ and the hypothesis of Lemma \ref{bourgain-round} is satisfied. Indeed, $s$ is in the valid range of exponents, because $\rho\leq \rho_0$. We also have $\gap(B_{P,x},\partial U)=(3/4L)\side P-(1/4L)\side P=2r=g r$. Finally, the cubes in $\Delta$ have initial scale $2r/h$, and $\Site(\Delta_{P,x})\cap \overline{B_{P,x}}$ is compact, because $\Site(\Delta_{P,x})$ is compact. Now, the opposite inequality to \eqref{hole-def} holds, because $P$ satisfies \textbf{Alternative II}. Inserting it into \eqref{generic-bourgain}, we obtain \begin{equation*} \omega^{x}_{U\setminus E}(E\cap U) \geq \frac{2}{n}\cdot \frac{1-2^{-(n-2)}}{1+n^{-6}}
\cdot \frac{\Net{n-\rho}_\infty(E\cap B(x,r))}{\omega_n r^{n-\rho}}\geq \frac{2}{n}\cdot \frac{1-2^{-(n-2)}}{1+n^{-6}}
\cdot \frac{a \omega_n r^{n-\rho}}{\omega_n r^{n-\rho}}=\eta,\end{equation*} with $\eta$ defined as in \eqref{eta-def}.
\end{proof}

\begin{figure}[t]
\centering
\setlength{\tabcolsep}{0pt}   
\begin{tabular*}{\textwidth}{@{\extracolsep{\fill}}cc@{}}
\begin{tikzpicture}[baseline=(current bounding box.center),scale=0.77,
                    every node/.style={inner sep=1.5pt}]
  \def\Jpath{
    (0,1.7)      .. controls (0.95,1.7)    and (2.128,1.396) .. (2.5,0.9)
                 .. controls (2.908,0.356) and (3.425,0.042) .. (3.2,-0.6)
                 .. controls (2.853,-1.591) and (1.448,-1.766) .. (0.35,-1.7)
                 .. controls (-0.748,-1.634) and (-2.225,-1.9) .. (-2.65,-1.05)
                 .. controls (-2.945,-0.46) and (-3.418,-0.128) .. (-3.1,0.45)
                 .. controls (-2.714,1.151) and (-1.05,1.7)   .. (0,1.7)}
  \fill[black!7] \Jpath;
  \draw[thick] \Jpath;
  \filldraw[fill=black!25,draw=black]
    (-2.3,-0.3) .. controls (-2.25,0.25) and (-1.45,0.52) .. (-1,0.38)
                .. controls (-0.62,0.26) and (-0.68,-0.15) .. (-0.2,-0.22)
                .. controls (0.32,-0.3)  and (0.85,-0.09) .. (0.95,-0.45)
                .. controls (1.03,-0.8)  and (0.35,-0.95) .. (-0.15,-1.09)
                .. controls (-0.5,-1.18) and (-0.62,-1.4) .. (-1.25,-1.18)
                .. controls (-1.75,-1)   and (-2.38,-0.85) .. (-2.3,-0.3);
  \filldraw[fill=black!25,draw=black]
    (1.45,0.2) .. controls (1.9,0.34)  and (2.22,0.08) .. (2.16,-0.22)
               .. controls (2.1,-0.5)  and (1.68,-0.62) .. (1.4,-0.4)
               .. controls (1.18,-0.22) and (1.18,0.04) .. (1.45,0.2);
  \draw[thick] (-3.5,1.7) -- (3.5,1.7) node[above left]{$H$};
  \draw[thin] (0.16,1.7) -- (0.16,1.54) -- (0,1.54);   
  \draw[thick] (0,1.05) -- (0,1.7);
  \fill (0,1.7) circle (1.7pt); \fill (0,1.05) circle (1.7pt);
  \node[anchor=south east] at (-0.06,1.72) {$\xi$};
  \node[anchor=east] at (-0.1,1.05) {$x$};
  \draw[thin,black!55] (0.08,1.42) -- (0.75,0.9);      
  \node at (1.05,0.72) {\small $\operatorname{dist}(x,\partial J)$};
  \draw[<->,thick] (-0.786,-1.275) -- (-0.787,-1.691);
  \node[anchor=north] at (-0.79,-1.9) {\small $\operatorname{gap}(K,\partial J)$};
  \node at (-2.45,0.55) {$J$};
  \node at (-1.35,-0.55) {$K$};
\end{tikzpicture}
&
\begin{tikzpicture}[baseline=(current bounding box.center),scale=0.8,
                    every node/.style={inner sep=1.5pt}]
  \fill[black!5] (0,0) rectangle (8.8,3);
  \draw[thick] (0,3) -- (8.8,3) node[anchor=south east]{$\partial P$};
  \draw[thick] (0,0) -- (8.8,0) node[anchor=north east]{$\partial P_0$};
  \draw[thick,dashed,red!65!black] (0,2) -- (8.8,2);
  \node[anchor=south] at (3.1,2.03) {$\Gamma_P$};
  \node at (0.4,0.85) {$R_P$};
  \draw[black!85,line width=0.35pt]
    (1.60,2.00) -- (1.71,2.00) -- (1.77,2.06) -- (1.85,2.02) -- (1.72,2.13) -- (1.75,2.22) --
    (1.84,2.20) -- (1.92,2.19) -- (1.79,2.34) -- (1.70,2.31) -- (1.73,2.19) -- (1.65,2.11) --
    (1.73,1.98) -- (1.71,1.91) -- (1.63,1.92) -- (1.72,1.81) -- (1.61,1.73) -- (1.71,1.84) --
    (1.67,1.85) -- (1.73,1.95) -- (1.64,1.88) -- (1.63,1.79) -- (1.56,1.68) -- (1.59,1.53) --
    (1.72,1.61) -- (1.80,1.67) -- (1.72,1.58) -- (1.87,1.74) -- (1.76,1.74) -- (1.84,1.75) --
    (1.79,1.82) -- (1.77,1.67) -- (1.74,1.46) -- (1.88,1.51) -- (1.78,1.58) -- (1.70,1.57) --
    (1.64,1.73) -- (1.71,1.62) -- (1.73,1.53) -- (1.80,1.58) -- (1.87,1.45) -- (1.83,1.51) --
    (1.79,1.67) -- (1.85,1.61) -- (1.88,1.50) -- (1.86,1.43) -- (1.80,1.27) -- (1.94,1.29) --
    (1.91,1.20) -- (1.95,1.11) -- (1.91,0.92) -- (1.91,0.79) -- (1.97,0.85) -- (1.90,0.78) --
    (2.00,0.77) -- (1.99,0.59) -- (2.07,0.57) -- (2.13,0.50) -- (2.20,0.46) -- (2.27,0.59) --
    (2.22,0.32) -- (2.30,0.30) -- (2.27,0.20) -- (2.21,0.31) -- (2.34,0.40) -- (2.41,0.34) --
    (2.40,0.42) -- (2.48,0.37) -- (2.44,0.22) -- (2.44,0.30) -- (2.35,0.21) -- (2.25,0.29) --
    (2.21,0.21) -- (2.27,0.32) -- (2.43,0.26) -- (2.45,0.38) -- (2.42,0.28) -- (2.35,0.20) --
    (2.27,0.22) -- (2.18,0.28) -- (2.19,0.53) -- (2.16,0.66) -- (2.25,0.79) -- (2.37,0.77) --
    (2.42,0.55) -- (2.43,0.71) -- (2.44,0.55) -- (2.52,0.51) -- (2.57,0.60) -- (2.61,0.72) --
    (2.69,0.72) -- (2.62,0.62) -- (2.71,0.64) -- (2.75,0.50) -- (2.80,0.60) -- (2.81,0.45) --
    (2.89,0.37) -- (2.91,0.29) -- (2.87,0.22) -- (2.91,0.29) -- (2.82,0.19) -- (2.93,0.22) --
    (3.09,0.21) -- (3.23,0.27) -- (3.38,0.19) -- (3.49,0.25) -- (3.60,0.29) -- (3.69,0.25) --
    (3.50,0.26) -- (3.59,0.29) -- (3.62,0.22) -- (3.70,0.18) -- (3.71,0.33) -- (3.69,0.54) --
    (3.74,0.63) -- (3.80,0.56) -- (3.83,0.49) -- (3.80,0.58) -- (3.85,0.39) -- (3.90,0.32) --
    (4.03,0.25) -- (3.89,0.33) -- (3.99,0.26) -- (4.00,0.33) -- (4.11,0.46) -- (4.20,0.63) --
    (4.30,0.64) -- (4.36,0.57) -- (4.30,0.79) -- (4.36,0.94) -- (4.44,1.07) -- (4.29,1.06) --
    (4.40,1.20) -- (4.47,1.24) -- (4.48,1.38) -- (4.47,1.60) -- (4.49,1.86) -- (4.43,1.78) --
    (4.46,1.66) -- (4.45,1.83) -- (4.38,1.88) -- (4.46,1.92) -- (4.55,1.90) -- (4.60,2.00) --
    (4.64,2.07) -- (4.80,2.15) -- (4.88,2.16) -- (4.96,2.31) -- (5.01,2.15) -- (4.93,2.13) --
    (4.83,2.26) -- (4.75,2.27) -- (4.76,2.20) -- (4.70,2.07) -- (4.73,2.13) -- (4.76,2.06) --
    (4.59,1.91) -- (4.50,1.86) -- (4.41,1.79) -- (4.51,1.62) -- (4.56,1.72) -- (4.65,1.61) --
    (4.62,1.54) -- (4.62,1.67) -- (4.58,1.59) -- (4.66,1.60) -- (4.77,1.59) -- (4.78,1.82) --
    (4.89,1.68) -- (4.88,1.60) -- (4.82,1.51) -- (4.84,1.61) -- (4.80,1.38) -- (4.78,1.47) --
    (4.84,1.37) -- (4.80,1.23) -- (4.96,1.34) -- (4.88,1.29) -- (4.97,1.51) -- (5.11,1.52) --
    (5.13,1.39) -- (5.07,1.49) -- (5.06,1.60) -- (5.02,1.53) -- (5.00,1.62) -- (4.94,1.46) --
    (4.88,1.33) -- (4.79,1.28) -- (4.78,1.44) -- (4.65,1.33) -- (4.60,1.17) -- (4.57,1.24) --
    (4.59,1.37) -- (4.60,1.50) -- (4.67,1.41) -- (4.61,1.49) -- (4.68,1.43) -- (4.81,1.38) --
    (4.84,1.19) -- (4.84,1.26) -- (4.85,1.45) -- (4.84,1.35) -- (4.95,1.26) -- (5.07,1.27) --
    (5.14,1.20) -- (5.20,1.01) -- (5.30,1.07) -- (5.39,0.96) -- (5.28,0.88) -- (5.23,1.05) --
    (5.22,1.15) -- (5.28,1.25) -- (5.37,1.32) -- (5.46,1.34) -- (5.39,1.29) -- (5.41,1.38) --
    (5.51,1.40) -- (5.45,1.08) -- (5.46,0.95) -- (5.55,0.93) -- (5.67,1.05) -- (5.72,1.19) --
    (5.73,1.34) -- (5.80,1.37) -- (5.84,1.27) -- (5.79,1.37) -- (5.81,1.24) -- (5.85,1.16) --
    (5.97,1.29) -- (6.13,1.35) -- (6.20,1.26) -- (6.28,1.10) -- (6.31,1.27) -- (6.29,1.39) --
    (6.39,1.40) -- (6.34,1.34) -- (6.31,1.45) -- (6.36,1.33) -- (6.43,1.27) -- (6.38,1.41) --
    (6.45,1.35) -- (6.38,1.41) -- (6.39,1.51) -- (6.33,1.41) -- (6.42,1.33) -- (6.27,1.17) --
    (6.36,1.35) -- (6.33,1.42) -- (6.43,1.42) -- (6.47,1.57) -- (6.55,1.62) -- (6.43,1.64) --
    (6.52,1.65) -- (6.63,1.57) -- (6.66,1.50) -- (6.58,1.62) -- (6.67,1.65) -- (6.71,1.57) --
    (6.75,1.63) -- (6.84,1.74) -- (6.95,1.70) -- (7.01,1.79) -- (6.93,1.75) -- (7.01,1.62) --
    (7.08,1.52) -- (7.10,1.65) -- (6.99,1.52) -- (6.97,1.37) -- (6.99,1.30) -- (7.13,1.34) --
    (7.19,1.49) -- (7.25,1.58) -- (7.32,1.64) -- (7.41,1.62) -- (7.34,1.65) -- (7.29,1.82) --
    (7.28,1.91) -- (7.40,2.00) -- (7.40,1.92) -- (7.51,1.96) -- (7.60,1.89) -- (7.66,1.83) --
    (7.58,1.74) -- (7.60,1.65) -- (7.67,1.63) -- (7.58,1.57) -- (7.47,1.69) -- (7.49,1.78) --
    (7.38,1.59) -- (7.32,1.77) -- (7.41,1.68) -- (7.49,1.88) -- (7.54,1.72) -- (7.62,1.74) --
    (7.57,1.68) -- (7.59,1.75) -- (7.66,1.80) -- (7.73,1.92) -- (7.83,1.94) -- (7.90,2.02) --
    (8.02,2.05) -- (7.96,2.14) -- (7.90,2.01) -- (7.92,2.14) -- (8.00,2.31) -- (8.05,2.22) --
    (8.10,2.39) -- (8.19,2.36) -- (8.21,2.27) -- (8.26,2.15) -- (8.18,2.12) -- (8.22,2.02) --
    (8.15,2.08) -- (8.23,2.17) -- (8.30,2.14) -- (8.43,2.17) -- (8.33,2.22) -- (8.29,2.12) --
    (8.32,2.05) -- (8.34,1.72) -- (8.46,1.63) -- (8.45,1.48) -- (8.33,1.43) -- (8.41,1.31) --
    (8.42,1.40) -- (8.49,1.31) -- (8.42,1.13) -- (8.45,1.23) -- (8.48,1.30) -- (8.48,1.19) --
    (8.56,1.04) -- (8.64,0.97) -- (8.76,1.12) -- (8.80,1.02);
  \foreach \x/\i in {1.6/1, 4.6/2, 7.4/3}{
    \draw[dashed,blue!55!black] (\x,2) circle (1);
    \fill (\x,2) circle (1.7pt);
    \node[anchor=south east,inner sep=1pt] at (\x-0.05,2.0) {$z_\i$};
    \node at (\x,2.58) {\footnotesize $\widehat B_{P,z_\i}$};
  }
\end{tikzpicture}
\\[2pt]
(a) & (b)
\end{tabular*}
\caption{(a) Lemma \ref{lem:slab}: a supporting hyperplane
$H$ for $J$ at a point $\xi\in\partial J$ nearest to $x\in J\setminus K$.
(b) Lemma \ref{alt-2-lemma}: a Brownian path departing the separating surface from $z_1\in \Gamma_P$ may return to
$\Gamma_P$ at $z_2,z_3,\dots$ without leaving the ring $R_P$, where $z_{i+1}$ is the first point of $\Gamma_P$ met after the path leaves $\widehat B_{P,z_i}$. Under \textbf{Alternative II}, at each $z_i$ there is uniformly large probability that the path meets $\partial\Omega$ inside $\widehat B_{P,z_i}$.}
\label{fig:supporting-and-returns}
\end{figure}

\begin{lemma}[strong Markov property for harmonic measure]\label{lem:strong-markov}
Let $\Omega_1\subset\Omega_2\subset\mathbb{R}^n$ be bounded domains. Then for
every $x\in\Omega_1$ and every Borel set $A\subset\partial\Omega_2$,
\begin{equation}\label{eq:strong-markov}
  \omega^x_{\Omega_2}(A)
  \;=\;\omega^x_{\Omega_1}\bigl(A\cap\partial\Omega_1\bigr)
  \;+\;\int_{\partial\Omega_1\cap\Omega_2}\omega^z_{\Omega_2}(A)
        \,d\omega^x_{\Omega_1}(z).
\end{equation}
We will use the following immediate consequences. If $A,B\subset\partial\Omega_2$
are Borel sets and $c\in\RR$, write
$\varphi(y):=\omega^y_{\Omega_2}(A)-c\,\omega^y_{\Omega_2}(B)$ for
$y\in\Omega_2$. Then, for every $x\in\Omega_1$,
\allowdisplaybreaks
\begin{gather}
\tag{6.9a}\label{sm-bounds}
\omega^x_{\Omega_1}(A\cap\partial\Omega_1)\leq\omega^x_{\Omega_2}(A)
\quad\text{and}\quad
\int_{\partial\Omega_1\cap\Omega_2}\omega^z_{\Omega_2}(A)\,d\omega^x_{\Omega_1}(z)
\leq\omega^x_{\Omega_2}(A),\\[2pt]
\tag{6.9b}\label{sm-null}
\omega^x_{\Omega_2}(A)
=\int_{\partial\Omega_1\cap\Omega_2}\omega^z_{\Omega_2}(A)\,d\omega^x_{\Omega_1}(z)
\quad\text{if }\omega^x_{\Omega_1}(A\cap\partial\Omega_1)=0,\\[2pt]
\tag{6.9c}\label{sm-linear}
\varphi(x)
=\omega^x_{\Omega_1}(A\cap\partial\Omega_1)
-c\,\omega^x_{\Omega_1}(B\cap\partial\Omega_1)
+\int_{\partial\Omega_1\cap\Omega_2}\varphi\,d\omega^x_{\Omega_1},\\[2pt]
\tag{6.9d}\label{sm-linear-ineq}
\varphi(x)
\leq\omega^x_{\Omega_1}(A\cap\partial\Omega_1)
+\int_{\partial\Omega_1\cap\Omega_2}\varphi\,d\omega^x_{\Omega_1}
\quad\text{if }c\geq0.
\end{gather}
\end{lemma}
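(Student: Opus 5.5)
The plan is to prove \eqref{eq:strong-markov} with the strong Markov property of Brownian motion and then read off the four consequences from the identity together with $0\le\omega\le1$.

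Let $B$ be Brownian motion started at $x\in\Omega_1$. Put $T_1=\inf\{t:B(t)\notin\Omega_1\}$ and $T_2=\inf\{t:B(t)\notin\Omega_2\}$. Both are a.s.\ finite stopping times, since the domains are bounded, and $T_1\le T_2$ because $\Omega_1\subset\Omega_2$. By definition $\omega^x_{\Omega_i}(\cdot)=\mathbb P^x(B(T_i)\in\cdot)$. Note that $B(T_1)\in\partial\Omega_1\subset\overline{\Omega_2}$. We split the event $\{B(T_2)\in A\}$ according to whether $B(T_1)\in\partial\Omega_1\setminus\Omega_2$ or $B(T_1)\in\partial\Omega_1\cap\Omega_2$.

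\textbf{First case: $B(T_1)\notin\Omega_2$.} Then $B(T_1)\in\partial\Omega_2$, so $T_2=T_1$ and $B(T_2)=B(T_1)$. Since $A\subset\partial\Omega_2$, we also have $A\cap\partial\Omega_1\subset\partial\Omega_1\setminus\Omega_2$. This case therefore contributes $\mathbb P^x(B(T_1)\in A)=\omega^x_{\Omega_1}(A\cap\partial\Omega_1)$.

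\textbf{Second case: $B(T_1)=z\in\Omega_2$.} By the strong Markov property at $T_1$, the process $B(T_1+\cdot)$ is a Brownian motion started at $z$, independent of $\mathcal F_{T_1}$. Its exit time from $\Omega_2$ is $T_2-T_1$, so the conditional probability that $B(T_2)\in A$ is $\omega^z_{\Omega_2}(A)$. Integrating against the law of $B(T_1)$ gives the integral term.

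For this step we need $z\mapsto\omega^z_{\Omega_2}(A)$ to be Borel measurable on $\Omega_2$. It is harmonic there, hence continuous, so this holds.

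The corollaries then follow quickly.
\begin{itemize}
\item \eqref{sm-bounds}: both terms on the right of \eqref{eq:strong-markov} are nonnegative.
\item \eqref{sm-null}: the first term vanishes under the stated hypothesis.
\item \eqref{sm-linear}: apply \eqref{eq:strong-markov} to $A$ and to $B$, subtract $c$ times the second identity from the first, and use linearity of the integral. All integrands are bounded by $1$, so everything is finite.
\item \eqref{sm-linear-ineq}: this follows from \eqref{sm-linear} by dropping $-c\,\omega^x_{\Omega_1}(B\cap\partial\Omega_1)\le0$ when $c\ge0$.
\end{itemize}

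The main point needing care is the identification in the first case: exit points of $\Omega_1$ lying on $\partial\Omega_2$ are exactly exit points of $\Omega_2$. This uses $\partial\Omega_1\setminus\Omega_2\subset\partial\Omega_2$, which holds because $\partial\Omega_1\subset\overline{\Omega_1}\subset\overline{\Omega_2}$. The rest is standard.
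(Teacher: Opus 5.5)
Your proposal is correct and follows the paper. The paper derives (6.9a)--(6.9d) from \eqref{eq:strong-markov} exactly as you do: nonnegativity of both terms, vanishing of the first term, subtracting $c$ times the identity for $B$, and discarding the nonpositive term when $c\geq0$. The only difference is that the paper does not prove \eqref{eq:strong-markov} but cites it from the literature. Your split according to whether $B(T_1)\in\Omega_2$ (with $T_2=T_1$ on the complementary event and the strong Markov property at $T_1$ otherwise) supplies the standard justification of that cited fact.
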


\begin{proof}
For \eqref{eq:strong-markov}, see e.g.~\cite[Lemma B.1]{AAM19} or
\cite[Theorem 3.8]{dim-caloric}. Since the two terms on the right hand side are
nonnegative, \eqref{sm-bounds} follows, and \eqref{sm-null} is the case in
which the first term vanishes. Applying \eqref{eq:strong-markov} separately
to $A$ and $B$ and subtracting $c$ times the second identity gives
\eqref{sm-linear}. If $c\geq0$, omitting the nonpositive term
$-c\,\omega^x_{\Omega_1}(B\cap\partial\Omega_1)$ gives
\eqref{sm-linear-ineq}.
\end{proof}

\begin{lemma}\label{lem:slab}
Let $J\subset\RR^n$ be a bounded open convex set and let $K\subset J$ be compact. If $U$ is a domain contained in $J\setminus K$, then
\begin{equation}\label{slab-bound}
\omega^x_U(\partial K)\leq \frac{\dist(x,\partial J)}{\gap(K,\partial J)}
\qquad\text{for all }x\in U.
\end{equation}
\end{lemma}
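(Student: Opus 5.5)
Fix $x\in U$ and let $\xi\in\partial J$ be a point nearest to $x$, so $|x-\xi|=\dist(x,\partial J)$. By convexity of $J$ there is a supporting hyperplane $H$ to $J$ at $\xi$; since $\xi$ is a nearest boundary point, $H$ can be taken orthogonal to $x-\xi$ (see Figure \ref{fig:supporting-and-returns}(a)). Let $\nu$ be the unit normal pointing from $H$ into $J$ and define the affine function
\[
\ell(y):=\frac{\langle y-\xi,\nu\rangle}{\gap(K,\partial J)},
\]
so that $\ell>0$ on $J$, $\ell(x)=\dist(x,\partial J)/\gap(K,\partial J)$, and $\ell$ is harmonic on all of $\RR^n$.

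The key geometric step is that $\ell\geq1$ on $K$. For $y\in K$, the ball $B(y,\gap(K,\partial J))$ lies in $\overline J$, because every point of $\partial J$ is at distance at least $\gap(K,\partial J)$ from $y$ and $J$ is connected to $y$. Since $J$ lies on one side of $H$, this ball lies in the closed half-space $\{\langle\cdot-\xi,\nu\rangle\geq0\}$. Hence the signed distance from $y$ to $H$ is at least $\gap(K,\partial J)$, that is, $\ell(y)\geq1$. Also $\ell\geq0$ on $\overline J\supseteq\overline U$.

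Next apply the maximum principle (or the probabilistic representation) on $U$. The function $y\mapsto\omega^y_U(\partial K)$ is harmonic in $U$. Its boundary values are at most $\chi_{\partial K}$ on $\partial U$ away from a polar set, and $\partial U\subset\overline J$. Meanwhile $\ell$ is harmonic, continuous on $\overline U$, nonnegative on $\partial U$, and at least $1$ on $\partial U\cap\partial K\subset K$. Therefore $\omega^y_U(\partial K)=\int\chi_{\partial K}\,d\omega^y_U\leq\int\ell\,d\omega^y_U=\ell(y)$. The last equality holds because $U$ is bounded and $\ell$ is a continuous harmonic function on a neighborhood of $\overline U$, so it equals its harmonic-measure integral. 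Evaluating at $y=x$ gives \eqref{slab-bound}.

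The only delicate point is the claim $\ell\geq1$ on $K$, which needs the half-space containment of $J$ and the ball-in-$\overline J$ argument. One could worry about $y\in K$ with the ball escaping $J$. This cannot happen: the segment from $y$ to any point outside $\overline J$ must cross $\partial J$, so every point of $\RR^n\setminus\overline J$ is farther than $\gap(K,\partial J)$ from $y$. The representation $\ell(x)=\int\ell\,d\omega^x_U$ is standard, for instance by optional stopping for Brownian motion, since $\ell(B_t)$ is a martingale and the exit time from bounded $U$ is integrable.
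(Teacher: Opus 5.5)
Your proof is correct and follows the paper's argument: a supporting hyperplane at a nearest boundary point $\xi$, the affine barrier $\ell$ with $\ell\geq1$ on $K$ (after your normalization by $\gap(K,\partial J)$) and $\ell\geq0$ on $\overline J$, and the identity $\ell(x)=\int\ell\,d\omega^x$. The only difference is that the paper integrates over the boundary of the component $S$ of $J\setminus K$ containing $U$ and then passes to $U$ via $\omega^x_U(\partial K)\leq\omega^x_S(\partial K)$, whereas you integrate directly over $\partial U$. That works equally well since $\partial U\subset\overline J$. Also, only $\ell(x)\leq\dist(x,\partial J)/\gap(K,\partial J)$ is needed, so the orthogonality of $H$ to $x-\xi$, while true, is unnecessary.
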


\begin{proof}
Let $S$ be the connected component of $J\setminus K$ containing $U$. Fix
$x\in U$ and choose $\xi\in\partial J$ with
$|x-\xi|=\dist(x,\partial J)$. If $H$ is a hyperplane supporting $J$ at
$\xi$, let $\ell$ be the affine function with $\ell(y)=\dist(y,H)$ for
$y\in J$. Then
$\ell(x)\leq\dist(x,\partial J)$. For $y\in K$, the segment from $y$ to the
nearest point of $H$ ends outside $J$ and therefore meets $\partial J$, so
$\ell(y)\geq\gap(K,\partial J)$. Since $\ell\geq0$ on $\overline J$ and
$\ell$ is harmonic,
\[
\dist(x,\partial J)\geq\ell(x)=\int_{\partial S}\ell\,d\omega^x_S
\geq\gap(K,\partial J)\,\omega^x_S(\partial K).
\]
Moreover, $\partial K\cap\partial U\subset\partial K\cap\partial S$.
The first inequality in \eqref{sm-bounds}, applied to $U\subset S$ with
$A=\partial K\cap\partial S$, gives
$\omega^x_U(\partial K)\leq\omega^x_S(\partial K)$.
\end{proof}

Figure \ref{fig:supporting-and-returns}(b) suggests the probabilistic idea behind the next lemma. Under
\textbf{Alternative II}, at each return to $\Gamma_P$ at $z$, the path has
probability at least $\eta$ of hitting $E$ inside
$\widehat B_{P,z}\subset R_P$. Carrying out this idea gives the constant
$\theta$ below. The proof is self-contained, uses only harmonic measure, and
encodes all returns in one supremum.

\begin{lemma}\label{alt-2-lemma}
Suppose that $P$ satisfies \textbf{Alternative II}. If
$x_0\in\Omega\setminus P$ and $\omega\equiv\omega^{x_0}_\Omega$, then
\begin{equation}\label{k-g-estimate}
\omega(P_0)\leq \theta\,\omega(R_P),
\qquad
\theta:=\left(
\frac{n\,\omega_n}{\omega_{n-1}}
\left(\frac{4L}{3M}-1\right)\eta
\right)^{-1}.
\end{equation}
\end{lemma}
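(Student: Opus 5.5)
The plan is to run a ``return to the barrier'' argument through the single quantity
\[
\varphi(y):=\omega^y_\Omega(P_0)-\theta\,\omega^y_\Omega(R_P),\qquad y\in\Omega,
\]
and to show $S:=\sup_{z\in\Gamma_P\cap\Omega}\varphi(z)\leq 0$. The conclusion then follows for the pole. Since $x_0\notin P$ and $\Gamma_P$ separates $\RR^n\setminus P$ from $P_0$, every path from $x_0$ to $P_0$ meets $\Gamma_P$ first. Apply \eqref{sm-linear-ineq} with $\Omega_1$ the component of $\Omega\setminus\overline{J}$ containing $x_0$, where $J$ is the open cube bounded by $\Gamma_P$, and with $A=E\cap P_0$ (so $A\cap\partial\Omega_1=\emptyset$). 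This gives $\varphi(x_0)\leq S^+$. Note also that $\varphi\leq1$, so $S$ is finite.

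\emph{Step 1: one excursion from $z\in\Gamma_P\cap\Omega$.} Let $\Omega_1$ be the component of $\widehat B_{P,z}\setminus E$ containing $z$, and put $m:=\omega^z_{\Omega_1}(E\cap\widehat B_{P,z})$. By Lemma \ref{local-harmonic-measure}, $m\geq\eta$. Since $\widehat B_{P,z}\subset R_P$ is disjoint from $P_0$, \eqref{sm-linear} gives
\[
\varphi(z)=-\theta m+\int_{\partial\widehat B_{P,z}\cap\Omega}\varphi\,d\omega^z_{\Omega_1}.
\]
The integrating measure $\nu$ has mass $1-m$. By \eqref{sm-bounds}, applied to $\Omega_1\subset\widehat B_{P,z}$, it is dominated by $\omega^z_{\widehat B_{P,z}}$, which is normalized surface measure on the sphere of radius $r=(3/4L)\side P$.

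\emph{Step 2: from the sphere back to $\Gamma_P$.} Fix $y\in\partial\widehat B_{P,z}\cap\Omega$.

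If $y\notin J$, then \eqref{sm-linear-ineq} applied in $\Omega\setminus\overline J$ gives $\varphi(y)\leq S^+$ times a sub-probability.

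If $y\in J$, take $\Omega_1'$ to be the component of $(J\setminus\overline{P_0})\cap\Omega$ containing $y$. Then \eqref{sm-linear-ineq} gives
\[
\varphi(y)\leq p(y)\cdot 1+\bigl(\text{mass returning to }\Gamma_P\bigr)\,S^+,
\]
where $p(y)$ is the probability of reaching $\partial P_0$ before $\Gamma_P$; here I use $\varphi\leq1$ on $\partial P_0$. Lemma \ref{lem:slab}, with $J$ convex and $K=\overline{P_0}$, gives
\[
p(y)\leq \frac{\dist(y,\partial J)}{\gap(P_0,\partial J)}\leq\frac{t(y)}{(1/M-3/4L)\side P},
\]
where $t(y)\geq0$ is the distance from $y$ inward to the hyperplane of the face of $\Gamma_P$ through $z$. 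This is valid near edges and corners of $\Gamma_P$ too, because $\dist(y,\partial J)$ is at most the distance to that hyperplane.

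Since $\nu$ is dominated by normalized surface measure, and since the average over the inner hemisphere of the height (counting the outer hemisphere as $0$) is $r\,\omega_{n-1}/(n\omega_n)$, integrating gives
\[
\int p\,d\nu\leq \frac{(3/4L)\,\omega_{n-1}}{(1/M-3/4L)\,n\omega_n}=\Bigl(\frac{n\omega_n}{\omega_{n-1}}\Bigl(\frac{4L}{3M}-1\Bigr)\Bigr)^{-1}=\theta\eta .
\]

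\emph{Step 3: close the recursion.} Combining Steps 1 and 2,
\[
\varphi(z)\leq-\theta m+\theta\eta+(1-m)S^+\leq (1-\eta)S^+ .
\]
Taking the supremum over $z$ gives $S\leq(1-\eta)S^+$, which forces $S\leq0$.

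The main obstacle is the bookkeeping in Steps 2 and 3. I need the total returning mass to be at most $1-m$, with each return weighted by $S^+$, while the excursion toward $P_0$ is charged only through the slab bound. I will do this by nesting \eqref{sm-linear-ineq} twice: first in the ball, then in the slab $J\setminus\overline{P_0}$ or in the exterior of $J$. I will also need to check measurability of $y\mapsto\varphi(y)$ on the sphere; this holds because $\varphi$ is harmonic in $\Omega$. The geometric averaging constant should be a routine polar-coordinates computation.
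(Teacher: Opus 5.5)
Your proposal is correct and follows the paper's proof essentially step for step. It uses the same auxiliary function $\varphi$, the same bounds $\varphi\le C$ outside the cube bounded by $\Gamma_P$ and $\varphi\le C+\dist(y,\Gamma_P)/d$ in the slab via Lemma \ref{lem:slab}, one excursion in $\widehat B_{P,z}$ with $\nu\le\sigma_z$ and returning mass at most $1-\eta$, and the half-sphere average $r\omega_{n-1}/(n\omega_n)$ to close $C\le(1-\eta)C$. The only touch-ups needed are these: the identity in Step 1 and the mass of $\nu$ should be inequalities, namely $\varphi(z)\le-\theta m+\int\varphi\,d\nu$ and $\nu(\partial\Omega_1\cap\Omega)\le 1-m$, because $E$ may also meet the sphere $\partial\widehat B_{P,z}$; and when $2L=3M$ the closed ball may touch $\overline{P_0}$ at one point, which you should discard as a set of harmonic measure zero, as the paper does.
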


\begin{proof}
Let $S$ be the closed cube with $\partial S=\Gamma_P$. Set
\[
r:=\frac{3}{4L}\side P,\qquad
 d:=\left(\frac1M-\frac{3}{4L}\right)\side P,\qquad
\widehat B_{P,z}=B(z,r),\qquad
\gap(\overline{P_0},\Gamma_P)=d.
\]
Then $r\leq d$ by $2L\geq3M$, and
$\theta\eta=(r/d)\omega_{n-1}/(n\omega_n)$. For $y\in\Omega$, set
\[
\varphi(y):=\omega^y_\Omega(P_0)-\theta\,\omega^y_\Omega(R_P).
\]
Since harmonic measure is supported on $E$, $\varphi$ is the function in
\eqref{sm-linear} and \eqref{sm-linear-ineq} with $A=E\cap P_0$,
$B=E\cap R_P$, and $c=\theta$.
In particular, $\varphi\leq1$. If $\Gamma_P\cap\Omega=\emptyset$,
connectedness and $x_0\notin S$ force $\Omega\cap S=\emptyset$, so
$\omega(P_0)=0$. Thus, assume that $\Gamma_P\cap\Omega\neq\emptyset$ and
write $\Gamma:=\Gamma_P\cap\Omega$. It suffices to prove
$\sup_\Gamma\varphi\leq0$. Indeed, let $\Omega'$ be the component of
$\Omega\setminus S$ containing $x_0$. Then $\partial\Omega'\cap\Omega\subset
\Gamma$ and $(E\cap P_0)\cap\partial\Omega'=\emptyset$. If
$\sup_\Gamma\varphi\leq0$, then
\[
\omega^y_\Omega(P_0)\leq\theta\,\omega^y_\Omega(R_P)
\qquad\text{for all }y\in\partial\Omega'\cap\Omega.
\]
Hence \eqref{sm-null} and \eqref{sm-bounds} give
\[
\omega(P_0)
=\int_{\partial\Omega'\cap\Omega}\omega^y_\Omega(P_0)
\,d\omega^{x_0}_{\Omega'}(y)
\leq\theta\int_{\partial\Omega'\cap\Omega}\omega^y_\Omega(R_P)
\,d\omega^{x_0}_{\Omega'}(y)
\leq\theta\,\omega(R_P).
\]

Set $C=\max\{0,\sup_\Gamma\varphi\}$. Since $\varphi\leq1$, we have
$0\leq C\leq1$. To complete the proof, it remains to show that $C=0$.

First let $y\in\Omega\setminus S$, and let $U$ be the component of
$\Omega\setminus S$ containing $y$. Then $\partial U\cap\Omega\subset\Gamma$
and $\partial U\cap\overline{P_0}=\emptyset$. By \eqref{sm-linear},
\[
\varphi(y)=-\theta\,\omega^y_U((E\cap R_P)\cap\partial U)
+\int_{\partial U\cap\Omega}\varphi\,d\omega^y_U\leq C.
\]
Next let $y\in\Omega\cap\interior S\setminus\overline{P_0}$, and let $V$ be
the component of $\Omega\cap\interior S\setminus\overline{P_0}$ containing
$y$. Here $\partial V\cap\Omega\subset\Gamma\cup(\partial P_0\cap\Omega)$.
The two contributions on $\partial P_0$---the first term in
\eqref{sm-linear-ineq} and the part of the integral over
$\partial V\cap\Omega$---are supported on disjoint subsets of $\partial P_0$.
Since $\varphi\leq C$ on $\Gamma$ and $\varphi\leq1$ on $\Omega$,
\eqref{sm-linear-ineq} gives
$\varphi(y)\leq C+\omega^y_V(\partial P_0)
\leq C+\dist(y,\Gamma_P)/d$, where the last inequality is Lemma
\ref{lem:slab} with $J=\interior S$ and $K=\overline{P_0}$.

Fix $z\in\Gamma$. Let $D$ be the component of
$\widehat B_{P,z}\cap\Omega$ containing $z$, equivalently, the component of
$\widehat B_{P,z}\setminus E$ containing $z$, and let $e$ be an inward unit
normal to a face of $S$ containing $z$. The supporting hyperplane of that face
gives $\dist(y,\Gamma_P)\leq\langle y-z,e\rangle$ for
$y\in\interior S$. The preceding estimates therefore give
$\varphi(y)\leq C+\langle y-z,e\rangle_+/d$ on $\partial D\cap\Omega$, except
possibly at the point of tangency with $\overline{P_0}$, which has
$\omega^z_D$-measure zero.

Let $\nu=\omega^z_D|_{\partial D\cap\Omega}$ and let
$\sigma_z=\omega^z_{\widehat B_{P,z}}$, the normalized surface measure on
$\partial\widehat B_{P,z}$. Since
$\partial D\cap\Omega\subset\partial\widehat B_{P,z}$, the first inequality
in \eqref{sm-bounds}, applied to $D\subset\widehat B_{P,z}$, gives
$\nu\leq\sigma_z$. By \eqref{eta-est},
$\omega^z_D(E\cap\widehat B_{P,z})\geq\eta$, and therefore
$\nu(\partial D\cap\Omega)\leq1-\eta$. Since
$\widehat B_{P,z}\subset R_P$, we also have
$\omega^z_D((E\cap R_P)\cap\partial D)\geq\eta$, while
$\omega^z_D((E\cap P_0)\cap\partial D)=0$. Applying \eqref{sm-linear} to
$D\subset\Omega$ now gives
\[
\begin{split}
\varphi(z)
&\leq-\theta\eta+\int\varphi\,d\nu
\leq-\theta\eta+C(1-\eta)
+\frac1d\int\langle y-z,e\rangle_+\,d\sigma_z(y)\\
&=-\theta\eta+C(1-\eta)
+\frac rd\frac{\omega_{n-1}}{n\omega_n}
=(1-\eta)C.
\end{split}
\]
Here
$\int\langle y-z,e\rangle_+\,d\sigma_z(y)
=r\omega_{n-1}/(n\omega_n)$, and the last equality uses the identity for
$\theta\eta$ above. Taking the supremum over $z\in\Gamma$ and then the maximum with $0$ gives $C\leq(1-\eta)C$. Since $\eta>0$, we have
$C=0$, as required.
\end{proof}

\begin{lemma}\label{alt-2-lemma-P}
Suppose that $P$ satisfies \textbf{Alternative II}. If
$x_0\in\Omega\setminus P$ and $\omega\equiv\omega^{x_0}_\Omega$, then
\begin{equation}\label{k-g-estimate-P}
\omega(P_0)\leq W\,\omega(P),
\qquad
W=\left(
1+\frac{n\,\omega_n}{\omega_{n-1}}
\left(\frac{4L}{3M}-1\right)\eta
\right)^{-1}.
\end{equation}
\end{lemma}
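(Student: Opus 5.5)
Lemma \ref{alt-2-lemma-P} follows from Lemma \ref{alt-2-lemma} by elementary algebra, using that $P_0$ and $R_P$ partition $P$. Write $\kappa:=\frac{n\,\omega_n}{\omega_{n-1}}\left(\frac{4L}{3M}-1\right)\eta$. Then $\theta=\kappa^{-1}$ and $W=(1+\kappa)^{-1}$. Note $\kappa>0$: indeed $\eta>0$, and $4L/(3M)-1\geq 2/3-1+\ldots$; more precisely $2L\geq 3M$ gives $4L/(3M)\geq 2>1$.

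Since $P=P_0\cup R_P$ with $P_0\cap R_P=\emptyset$, finite additivity of the Borel measure $\omega=\omega^{x_0}_\Omega$ gives $\omega(P)=\omega(P_0)+\omega(R_P)$. Lemma \ref{alt-2-lemma} applies under the same hypotheses: \textbf{Alternative II} holds and $x_0\in\Omega\setminus P$. It yields $\omega(P_0)\leq\kappa^{-1}\omega(R_P)$, that is, $\kappa\,\omega(P_0)\leq \omega(R_P)=\omega(P)-\omega(P_0)$. Rearranging gives $(1+\kappa)\omega(P_0)\leq\omega(P)$, which is exactly $\omega(P_0)\leq W\omega(P)$.

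There is no real obstacle beyond Lemma \ref{alt-2-lemma} itself. The only points to check are the following.
\begin{itemize}
\item The partition of $P$ into $P_0$ and the outer children is genuinely disjoint and exhaustive. This holds by the construction of the first partition, with boundary types assigned so that the children partition $P$.
\item Both $P_0$ and $R_P$ are Borel sets, so additivity applies. They are finite unions of cubes.
\item The degenerate case $\omega(P)=0$ is harmless: then $\omega(P_0)=0$ and the inequality holds trivially.
\end{itemize}
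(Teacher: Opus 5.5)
Your proposal is correct and matches the paper's proof: combine the additivity $\omega(P)=\omega(P_0)+\omega(R_P)$ with Lemma \ref{alt-2-lemma} to get $\omega(P)\geq(1+\theta^{-1})\omega(P_0)$, then divide by $1+\theta^{-1}$. Delete the stray fragment ``$4L/(3M)-1\geq 2/3-1+\ldots$''; your corrected bound $4L/(3M)\geq 2$, which follows from $2L\geq 3M$, is the one you need.
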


\noindent\emph{Remark.} In \cite{Bourgain} and \cite{15-9s}, this
estimate appeared as
$\omega(P_0)\leq\eta^{-1}(1-\eta)^k\omega(P)$, where $k$ is the
number of outer rings. Here the large factor $\eta^{-1}$ is
eliminated, and $W<1$ decreases as $\eta$ increases and as
$\Gamma_P$ moves toward $\partial P$. This allows us to prove
$\sup\{\omega(P_0)/\omega(P):P\text{ satisfies \textbf{Alternative II}}\}<1$
with $k=1$. Using one ring instead of several also allows a larger
choice of $\rho$ in \eqref{rho-condition}.

\begin{proof}
By Lemma \ref{alt-2-lemma},
$\omega(P)=\omega(R_P)+\omega(P_0)\geq
(1+\theta^{-1})\omega(P_0)$. Therefore,
$\omega(P_0)\leq(1+\theta^{-1})^{-1}\omega(P)=W\omega(P)$.
\end{proof}

\begin{lemma}\label{alt-2-bound}
Suppose $P$ satisfies \textbf{Alternative II}. Let $V$ and $W$ be
defined as in \eqref{k-minimum-lower}, so that
$V=(\vol P_0)/(\vol P)$. If $W<V$, then for every exponent
$\alpha\in(0,1)$, the quantity $\gamma_\alpha:=
W^\alpha V^{1-\alpha}
+(1-W)^\alpha(1-V)^{1-\alpha}$ satisfies $\gamma_\alpha<1$ and
\begin{equation}\label{apply-CS}
\sum_{i=0}^{M^n-(M-2)^n}
\omega(P_i)^\alpha(\vol P_i)^{1-\alpha}
\leq
\gamma_\alpha\,\omega(P)^\alpha(\vol P)^{1-\alpha}.
\end{equation}
\end{lemma}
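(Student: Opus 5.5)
The plan is to split the sum in \eqref{apply-CS} into the inner child $P_0$ and the ring of outer children $P_1,\dots,P_N$, with $N=M^n-(M-2)^n$. We bound the ring terms jointly by H\"older's inequality and then use Lemma \ref{alt-2-lemma-P} together with monotonicity of a one-variable function. Normalize $\omega(P)=1$ and $\vol P=1$; both sides of \eqref{apply-CS} are homogeneous of degree $\alpha$ in $\omega$ and $1-\alpha$ in volume, and if $\omega(P)=0$ there is nothing to prove. Set $w:=\omega(P_0)$. Since the children partition $P$, we have $\omega(R_P)=1-w$, $\vol P_0=V$ and $\vol R_P=1-V$. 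One caveat: Lemma \ref{alt-2-lemma-P} is stated for a pole $x_0\in\Omega\setminus P$. I will assume the lemma is applied in that setting, as in the proof of Theorem \ref{t:bb}, where the pole is fixed outside the unit cube, so $w\leq W$.

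First, bound the ring by H\"older's inequality with exponents $1/\alpha$ and $1/(1-\alpha)$, exactly as in \eqref{cauchy-schwarz}:
\[
\sum_{i=1}^{N}\omega(P_i)^\alpha(\vol P_i)^{1-\alpha}\leq \omega(R_P)^\alpha(\vol R_P)^{1-\alpha}=(1-w)^\alpha(1-V)^{1-\alpha}.
\]
So the left side of \eqref{apply-CS} is at most $f(w):=w^\alpha V^{1-\alpha}+(1-w)^\alpha(1-V)^{1-\alpha}$.

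Next, study $f$ on $[0,1]$. It is strictly concave, and its derivative $\alpha\big[(V/w)^{1-\alpha}-((1-V)/(1-w))^{1-\alpha}\big]$ vanishes exactly at $w=V$, where $f(V)=1$. Hence $f$ is increasing on $[0,V]$. Since $w\leq W<V$, we get $f(w)\leq f(W)=\gamma_\alpha$. This proves \eqref{apply-CS}.

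Finally, $\gamma_\alpha=f(W)<f(V)=1$ by strict monotonicity on $[0,V]$. Alternatively, this is the equality case of the weighted AM--GM inequality, $W^\alpha V^{1-\alpha}\leq \alpha W+(1-\alpha)V$ and likewise for the second term, which is strict because $W\neq V$.

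The main obstacle is conceptual rather than computational: recognizing that the lemma needs the upper bound $w\leq W$ and the monotonicity of $f$ below its maximizer $w=V$, and not only the Hölder bound. Hölder alone gives only $\gamma=1$.
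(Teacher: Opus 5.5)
Your proposal is correct and follows essentially the same route as the paper: Hölder's inequality on the ring of outer children reduces the sum to the one-variable function $x^\alpha V^{1-\alpha}+(1-x)^\alpha(1-V)^{1-\alpha}$, and its monotonicity on $[0,V]$ together with Lemma \ref{alt-2-lemma-P} gives both $\gamma_\alpha<1$ and \eqref{apply-CS}. Your caveat about the pole is also accurate: the paper's proof silently invokes Lemma \ref{alt-2-lemma-P}, which requires $x_0\in\Omega\setminus P$, and this holds in the proof of Theorem \ref{t:bb} whenever $\omega(P)>0$.
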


\begin{proof}
We may assume that $\omega(P)>0$, since \eqref{apply-CS} is trivial
otherwise. Fix $\alpha\in(0,1)$ and set
$H(x)=x^\alpha V^{1-\alpha}+(1-x)^\alpha(1-V)^{1-\alpha}$ for
$0\leq x\leq1$. We have $H(V)=1$ and $H'(x)>0$ for $0<x<V$, so
$H$ is strictly increasing on $[0,V]$. Put
$N=M^n-(M-2)^n$. By H\"older's inequality,
\[
\begin{split}
\sum_{i=0}^{N}\omega(P_i)^\alpha(\vol P_i)^{1-\alpha}
&\leq
\omega(P_0)^\alpha(\vol P_0)^{1-\alpha}
+\left(\sum_{i=1}^{N}\omega(P_i)\right)^\alpha
 \left(\sum_{i=1}^{N}\vol P_i\right)^{1-\alpha}\\
&=
H\left(\frac{\omega(P_0)}{\omega(P)}\right)
\omega(P)^\alpha(\vol P)^{1-\alpha}.
\end{split}
\]
By Lemma \ref{alt-2-lemma-P},
$\omega(P_0)/\omega(P)\leq W<V$. Therefore,
$H(\omega(P_0)/\omega(P))\leq H(W)=\gamma_\alpha<H(V)=1$, which
proves \eqref{apply-CS}.
\end{proof}

\subsection{Proof of Theorem \ref{t:bb}}\label{ss:proof:t:bb}

Let $n\geq3$ and $0<a<1$. Define $\eta$ by \eqref{eta-def}. Fix
integers $M\geq3$, $L\geq2$, and $h\geq n^3$ with $2L\geq3M$ and
$W<V$, where $W$ and $V$ are defined in \eqref{k-minimum-lower}.
Define $\rho_0$ by \eqref{rho0-def}, and fix
$\rho\in(0,\rho_0]$ satisfying \eqref{rho-condition}. For
$\alpha\in(0,1)$, define $\gamma_\alpha$ as in
Lemma \ref{alt-2-bound}, and let $\lambda$ and $\tau$ be given as in \eqref{lambda-def} and \eqref{tau-def}.

Let $\Omega\subset\RR^n$ be a bounded domain. By translation and
dilation, we may assume that $0\in\Omega$ and
$\dist(0,\partial\Omega)>\sqrt n$. Thus, every cube of side length at
most $1$ that meets $\partial\Omega$ lies away from the pole $0$.
Set $\omega=\omega^0_\Omega$, $E=\partial\Omega$, and
$\beta=1-1/L$. Then $E$ is closed,
$\omega(\RR^n\setminus E)=0$, and $\beta<1$. Fix a cube $P$ of side
length at most $1$.

Suppose first that $P$ satisfies \textbf{Alternative I}. By
Lemma \ref{l:alt-1-cover} and \eqref{rho-condition}, there is a
pairwise disjoint cover $\mathcal Q$ of $E\cap P$ by cubes
$Q$ with $\side Q\leq\beta\side P$ and
$\sum_{Q\in\mathcal Q}(\side Q)^{n-\rho}
\leq(\side P)^{n-\rho}$. Therefore, $P$ satisfies condition (I) of
Theorem \ref{dim-lemma}.

Suppose instead that $P$ satisfies \textbf{Alternative II}. If
$\omega(P)=0$, then condition (II) of Theorem \ref{dim-lemma} holds
trivially. Assume that \(\omega(P)>0\). Since \(\omega\) is supported on \(E\), the cube \(P\) meets \(\partial\Omega\), so the pole \(0\) lies outside \(P\) by our normalization.
Let $P_0$ be the inner child and
$P_1,\dots,P_N$, $N=M^n-(M-2)^n$, be the outer children. Then
$\mathcal R=\{P_0,\dots,P_N\}$ is a pairwise disjoint cover of
$E\cap P$. Every cube in $\mathcal R$ has side length at most
$\side P_0=(1-2/M)\side P\leq\beta\side P$, while its
$(n-\rho)$-dimensional cubical content is
$\tau(\side P)^{n-\rho}$. Thus, $\tau>1$ by
Principle \ref{p:bigger}. Finally, Lemma \ref{alt-2-bound} gives
$\sum_{R\in\mathcal R}\omega(R)^\alpha(\vol R)^{1-\alpha}
\leq\gamma_\alpha\omega(P)^\alpha(\vol P)^{1-\alpha}$.
Therefore, $P$ satisfies condition (II) of Theorem \ref{dim-lemma}
with exponent $\alpha$.

Fix $\alpha\in(0,1)$. We have shown that every cube of side length at
most $1$ satisfies condition (I) or (II) of Theorem
\ref{dim-lemma}. Hence
$\overline{\dim}_H\omega\leq
n-\lambda_\alpha\rho/(1+\lambda_\alpha)$, where
$\lambda_\alpha=-\log_\tau(\gamma_\alpha)/(1-\alpha)$. It remains to
let $\alpha\to1$. Put $f(\alpha)=\log\gamma_\alpha$. The function
$f$ is convex on $[0,1]$, since it is the logarithm of a sum of
exponentials of affine functions of $\alpha$, and $f(1)=0$.
Therefore, the slope of the secant line joining \((\alpha,f(\alpha))\) to \((1,f(1))\), namely \([f(1)-f(\alpha)]/(1-\alpha)\), is nondecreasing as \(\alpha\to1\) and converges to \(f'(1)\).
Direct differentiation gives
$f'(1)=W\log(W/V)+(1-W)\log((1-W)/(1-V))$. Thus,
$\lambda_\alpha$ increases to $\lambda$ defined in
\eqref{lambda-def}. Since $x\mapsto x/(1+x)$ is continuous and
increasing, letting $\alpha\to1$ yields
$\overline{\dim}_H\omega\leq n-\lambda\rho/(1+\lambda)$. Taking the
supremum over bounded domains gives
$d_n\leq n-\lambda\rho/(1+\lambda)$. This concludes the proof of Theorem \ref{t:bb}.

\section{Lower bounds on \texorpdfstring{Bourgain's constant $b_n$}{Bourgain's constant}}\label{s:punchline}

We now derive explicit lower bounds for Bourgain's constant $b_n=n-d_n$ from
Theorem \ref{t:bb}. We first treat $n\geq 6$ uniformly and then bound $b_3$, $b_4$, and $b_5$ directly.

\begin{theorem}\label{t:sharp-4}
For all $n\geq 6$,
\begin{equation}\label{e:high-dimensional-bound}
b_n\geq \left(\frac{1}{n-1}\right)^{2n+6.12\sqrt n}.
\end{equation}
\end{theorem}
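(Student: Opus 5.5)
The plan is to apply Theorem \ref{t:bb} with an explicit family of parameters depending only on $n$, and then verify \eqref{k-minimum-lower} and \eqref{rho-condition} and lower-bound $\lambda\rho/(1+\lambda)$. I would take $a$ to be a fixed constant, say $a=1/2$. Then $\eta\approx a/n$ by \eqref{eta-def}, since $1-2^{-(n-2)}\geq 15/16$ for $n\geq6$. The ratio $\omega_{n-1}/(n\omega_n)$ is of order $1/\sqrt{2\pi n}$ by Lemma \ref{ball-volume}, so $W^{-1}-1\approx \sqrt{2\pi n}\,(4L/(3M)-1)\,\eta\approx c\,(4L/(3M)-1)/\sqrt n$. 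The condition $W<V=(1-2/M)^n\approx e^{-2n/M}$ forces $M$ to be of order $n$ and $L/M$ large. The natural choice is therefore $M\approx c_1 n$ and $L\approx c_2\, n^{1/2+\epsilon}M$, or more likely $L$ a power of $(n-1)$, with $h=n^3$.

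With those choices, $\tau-1\approx (n-\rho)\cdot(\text{ring volume})\cdot\rho\log M\approx 2n\rho\log M/M$, so $\log\tau\approx \tau-1$ is tiny. Meanwhile $\lambda\log\tau=D(W\Vert V)$, the binary relative entropy. If $W$ is comparable to $V$ with $V-W$ of order $V$, then $D$ is of constant order times $V$. Hence $\lambda\approx D/(\tau-1)$ is large, and $\lambda/(1+\lambda)\approx1$, so the bound on $b_n$ is essentially $\rho$. The real constraint is \eqref{rho-condition}, together with $\rho\leq\rho_0=0.2/(n^6\log h)$, which is only polynomially small and hence harmless.

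Next I would analyze \eqref{rho-condition}. Expanding each term as $x^{n-\rho}=x^n e^{\rho\log(1/x)}$, the sum of volumes of the listed cubes equals $1-(\text{volume of the tiny-cube ball part})$. The fourth term uses $(h-2\sqrt n)^n$ in place of $h^n$, which costs a relative factor $(1-2\sqrt n/h)^n\approx 1-2n^{1.5}/h$. This is tiny when $h=n^3$, but I would take $h$ somewhat larger if necessary. The hole gain is $(1-a)\omega_n(4L)^{-n}$, roughly. The condition then becomes approximately
$$\rho\Big(\log L\cdot nL^{-1}+\log(2Lh)\,2^{n-1}(2L)^{-n}\Big)\lesssim (1-a)\,\omega_n(4L)^{-n}-(\text{tiny-cube boundary loss}),$$
so $\rho\approx \omega_n (4L)^{-n} L/(n\log L)$. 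Since $L$ is of polynomial size in $n$, specifically about $n^{3/2}$ times a constant, this gives $\rho\approx (4L)^{-n}\cdot n^{-n/2}\cdots$. The target exponent $2n+6.12\sqrt n$ in base $n-1$ suggests $4L\approx (n-1)^{2}$ together with $\omega_n\approx (2\pi e/n)^{n/2}$ corrections. I would fix concrete choices, such as $L=\lceil (n-1)^{3/2}\cdot c\rceil$ and $M$ with $M\approx n$, tune $a$, and let the $\sqrt n$ term absorb the $\omega_n$, $n^{-n/2}$ mismatch and the lower-order factors.

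The main obstacle is the bookkeeping that turns these heuristics into a clean inequality valid for every $n\geq6$. This involves bounding $\omega_n$ via Lemma \ref{ball-volume}, bounding $(1-1/L)^{n-\rho}$ and the binomial counts $L^n-(L-1)^n$ with Bernoulli and $e^x$ estimates, and choosing the optimal $M/L$ tradeoff that produces exactly the constant $6.12\sqrt n$. I would set $\rho$ equal to the right side of \eqref{e:high-dimensional-bound} divided by $\lambda/(1+\lambda)$, verify each term of \eqref{rho-condition} by the expansions above, and check the finitely many small $n$ where asymptotic estimates are weakest numerically.
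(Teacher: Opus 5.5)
\textbf{There is a genuine gap.} Your framework is the right one: apply Theorem \ref{t:bb}, linearize \eqref{rho-condition} at $\rho=0$, and note that $\rho\le\rho_0$ and $\lambda/(1+\lambda)\ge 1/2$ are harmless. But you defer exactly the part that constitutes the proof, and the heuristic you plan to use there is false.

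The term $6.12\sqrt n$ in the exponent can absorb factors $e^{O(\sqrt n\log n)}$, but not $c^n$ for a fixed $c<1$. Every admissible $\rho$ obeys $\rho\lesssim\omega_n(4L)^{-n}L/(n\log L)$; this is your own linearization, made precise in Proposition \ref{p:sharpness-exponent}. Since also $\omega_n^{1/n}\sim\sqrt{2\pi e/n}$, any parameter choice that proves the statement must have
\[
L\le\frac{\sqrt{2\pi e}}{4}\,n^{3/2}\bigl(1+O(\log n/\sqrt n)\bigr),\qquad \frac{\sqrt{2\pi e}}{4}\approx 1.033 .
\]
So the leading constant of $L/n^{3/2}$ must be pinned down, while $W<V$ pushes $L$ up.

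Your concrete suggestions violate this bound:
\begin{itemize}
\item First, $\eta\approx 2a/n$, not $a/n$. With $a=1/2$ this gives $W^{-1}-1\approx\sqrt{2\pi}\,x/\sqrt n$, where $x=4L/(3M)-1$.
\item If $M\approx n$, then $V^{-1}\approx e^2$. So $W<V$ needs $x\gtrsim (e^2-1)\sqrt{n/(2\pi)}$, i.e.\ $L\approx\frac34 Mx\approx 1.9\,n^{3/2}$.
\item That gives $\rho\lesssim(0.54)^n n^{-2n}$ up to polynomial factors, which is below $(n-1)^{-2n-6.12\sqrt n}$ for all large $n$.
\item Likewise $L\approx n^{1/2+\epsilon}M$ with $\epsilon>0$ gives $n^{-(2+\epsilon)n}$, and ``$4L\approx(n-1)^2$'' read literally gives $n^{-5n/2}$.
\item Halving $\eta$ doubles the smallest admissible $L$ and costs $2^{-n}$ in $\rho$.
\end{itemize}
So ``tune $a$ and let the $\sqrt n$ term absorb the $\omega_n$ mismatch'' is not a viable plan. ``Check finitely many small $n$ numerically'' also presupposes a large-$n$ argument you do not have.

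What is missing is a specific choice that makes $\eta$ nearly maximal and keeps $L\le n^{3/2}(1+O(n^{-1/2}))$, followed by explicit verification. The paper takes:
\begin{itemize}
\item $a=n/(n+1)$, so $\eta\approx 2/n$, at the price of only the polynomial factor $1-a$ in the hole gain;
\item $h=n^6$, $q=\omega_{n-1}/(\omega_n\eta)<0.3n^{3/2}$, $m=n+\lceil q\rceil+1$, $M=2m$, $L=3m$.
\end{itemize}
Then $x=1$ and $W=1-n/(n+q)<1-n/m\le V$, with $V-W>n^{-2}$. Contrary to your claim, $W<V$ does not force $M\sim n$ with $L/M$ large: here $M\sim n^{3/2}$ and $L/M=3/2$. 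Setting $\rho=\omega_n/(2n^2(4L)^n)$ and writing $\Sigma$ for the left side of \eqref{rho-condition}, the paper checks:
\begin{itemize}
\item $1-\Sigma(0)\ge\omega_n/(2(n+1)(4L)^n)$ and $\Sigma'<n/2$ on $[0,\rho_0]$;
\item $\lambda>1$;
\item $\sqrt{2\pi e}>4$ cancels the $4^n$, while $L\sqrt{n+2}<n^2A$ with $A=1+O(n^{-1/2})$. The leftover factor $A^n=e^{O(\sqrt n)}$ is what $6.12\sqrt n$ actually absorbs.
\end{itemize}
Some other choice might also land in the admissible window, for example $M$ a large enough multiple of $n$, or $a$ near $1$. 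Your proposal neither identifies that window nor carries out any of these verifications.
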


\begin{proof}
Fix $n\geq6$. Set $a=n/(n+1)$ and $h=n^6$, define $\eta$ by
\eqref{eta-def}, and put $q:=\omega_{n-1}/(\omega_n\eta)$,
$m:=n+\lceil q\rceil+1$, $M:=2m$, and $L:=3m$. Then
$2L=3M$, $W=1-n/(n+q)$, and $V=(1-1/m)^n$.

\emph{Step 1: $W<V$.}
Lemma \ref{ball-volume} gives
\[
q\leq\frac{n^{3/2}}{2\sqrt{2\pi}}\,
e^{1/(6n+12)}
\left(1+\frac2n\right)^{1/2}
\left(1+\frac1n\right)
\frac{1+n^{-6}}{1-2^{2-n}}
<0.3\,n^{3/2},
\]
where the last inequality follows by bounding the $n$-dependent
factors by their values at $n=6$. Hence
$m<n+0.3n^{3/2}+2<n^{3/2}$. Since $m>n+q$, Bernoulli's
inequality gives $V\geq1-n/m>W$. Moreover, since
$m-n-q\geq1$ and $n+q<m<n^{3/2}$,
\[
V-W\geq\frac{n(m-n-q)}{m(n+q)}
>\frac{n}{m^2}>\frac1{n^2}.
\]

\emph{Step 2: an admissible $\rho$.}
Let $\Sigma(\rho)$ denote the left hand side of
\eqref{rho-condition}. We estimate $\Sigma(\rho)$ by linearizing at
$\rho=0$. Direct substitution and Bernoulli's inequality give
\[
1-\Sigma(0)=\frac{\omega_n}{(4L)^n}
\left[\left(1-\frac{2\sqrt n}{h}\right)^n-a\right]
\geq\frac{\omega_n}{2(n+1)(4L)^n}.
\]
Indeed, the expression in brackets is at least
$1/(n+1)-2n^{-9/2}\geq1/[2(n+1)]$.

At $\rho=0$, the first two terms of $\Sigma$ sum to
$1-2^{n-1}/L^n$. Since $\Sigma(0)<1$, the last three terms sum to
less than $2^{n-1}/L^n$. Differentiating \eqref{rho-condition} termwise gives
\[
\begin{aligned}
\Sigma'(t)={}&
\underbrace{\log\frac{L}{L-1}
\left(1-\frac1L\right)^{n-t}}_{\mathrm{I}}
+\underbrace{\log L\,
\bigl(L^n-(L-1)^n-2^{n-1}\bigr)L^{t-n}}_{\mathrm{II}}\\
&+\boxed{\begin{aligned}
&\log(2L)\,2^{n-1}(2^n-1)(2L)^{t-n}\\
&+\log(2Lh)
\left(2^{n-1}h^n-\frac{\omega_n}{2^n}(h-2\sqrt n)^n\right)
(2Lh)^{t-n}
+a\omega_n\log(4L)(4L)^{t-n}
\end{aligned}}_{\,\mathrm{III}}.
\end{aligned}
\]For $0\leq t\leq\rho_0$, we have
$\mathrm{I}\leq2/L$, while Bernoulli's inequality gives
$\mathrm{II}\leq(2Lh)^t(n/L)\log L$. At $t=0$, the three terms
comprising $\mathrm{III}$ have total coefficient less than
$2^{n-1}/L^n$; since their bases are at most $2Lh$, it follows that
\[
\mathrm{III}\leq
(2Lh)^t\frac{2^{n-1}}{L^n}\log(2Lh).
\]
Therefore,
\[
\Sigma'(t)\leq(2Lh)^t
\left[
\frac2L+\frac nL\log L
+\frac{2^{n-1}}{L^n}\log(2Lh)
\right]<\frac n2.
\]

 Set
\begin{equation}\label{rho-choice}
\rho:=\frac{\omega_n}{2n^2(4L)^n}.
\end{equation}
Since $\omega_n\leq2^n$ and $L\geq3n$, we have
$\rho\leq1/[2n^2(6n)^n]<1/(30n^6\log n)=\rho_0$. By the mean value
theorem,
\[
\Sigma(\rho)-\Sigma(0)
<\frac n2\rho
=\frac{\omega_n}{4n(4L)^n}
\leq\frac{\omega_n}{2(n+1)(4L)^n}
\leq1-\Sigma(0).
\]
Thus, \eqref{rho-condition} holds.

\emph{Step 3: a lower bound for $\lambda$.}
Put
$\Phi(x):=x\log(x/V)+(1-x)\log((1-x)/(1-V))$. Then
$\Phi(V)=\Phi'(V)=0$ and $\Phi''(x)=1/[x(1-x)]\geq4$, so Taylor's
theorem and Step 1 give $\Phi(W)\geq2(W-V)^2>1/n^4$. Also,
$\tau\leq(2m)^\rho$, and since $2m<n^2$ and $\rho\leq\rho_0$,
we have $\log\tau\leq\rho_0\log(n^2)=1/(15n^6)<1/n^4$.
By \eqref{lambda-def}, $\lambda>1$, and hence
$\lambda/(1+\lambda)>1/2$.

\emph{Step 4: comparison with \eqref{e:high-dimensional-bound}.}
By Theorem \ref{t:bb} and \eqref{rho-choice},
$b_n>\rho/2=\omega_n/[4n^2(4L)^n]$. Lemma \ref{ball-volume} gives
\[
\omega_n\geq
\frac{e^{1-1/(6n+12)}}{\sqrt{\pi(n+2)}}
\left(\frac{\sqrt{2\pi e}}{\sqrt{n+2}}\right)^n
>\frac1{n^2}\left(\frac4{\sqrt{n+2}}\right)^n.
\]
Here we used $\sqrt{2\pi e}>4$ and
$n^2>\sqrt{\pi(n+2)}$. Consequently,
\[
b_n>\frac1{4n^4}
\left(\frac1{L\sqrt{n+2}}\right)^n.
\]
Since $L=3m<n^{3/2}+3n+6$, put
$A:=\sqrt{1+2/n}\,(1+3/\sqrt n+6n^{-3/2})$. Then
$L\sqrt{n+2}<n^2A$, while
$\sqrt n\log A\leq1/\sqrt n+3+6/n$. It therefore suffices to show
\[
6.12\log(n-1)>
\frac{2\sqrt n}{n-1}
+\frac1{\sqrt n}+3+\frac6n
+\frac{4\log n+\log4}{\sqrt n}.
\]
For $n\geq6$, the left hand side is greater than
$6.12\log5>9.8$. Using $\log n/\sqrt n\leq2/e$, the right hand
side is at most
$2\sqrt6/5+1/\sqrt6+4+8/e+\log4/\sqrt6<9$.
This proves \eqref{e:high-dimensional-bound}.
\end{proof}

\begin{theorem}\label{t:sharp-small}
We have
\[
b_3\geq1.0118\times10^{-5},\qquad
b_4\geq1.206\times10^{-7},\qquad
b_5\geq8.006\times10^{-10}.
\]
In particular, the upper Hausdorff dimension of harmonic measure on any
domain in $\RR^3$ is less than $2.99999$.
\end{theorem}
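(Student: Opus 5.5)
The bounds follow from Theorem \ref{t:bb} by exhibiting explicit admissible parameters for each $n\in\{3,4,5\}$ and evaluating the resulting bound $b_n\geq \lambda\rho/(1+\lambda)$ numerically with rigorous rounding. For each $n$ I will choose $a\in(0,1)$, integers $h\geq n^3$, $M\geq3$, and $L$ with $2L\geq 3M$, and $\rho\in(0,\rho_0]$. Guided by the proof of Theorem \ref{t:sharp-4}, I will not insist on $2L=3M$ or $h=n^6$. Instead I will take $h=n^3$, or slightly larger, so that $\rho_0=0.2/(n^6\log h)$ is not the binding constraint. I will then optimize $(a,M,L)$ numerically. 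The goal is to make the fifth term $a\omega_n(4L)^{-(n-\rho)}$ in \eqref{rho-condition} as large as possible relative to $L^{-n}$, while keeping $W<V$ with a margin large enough that $\lambda$ is not tiny.

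The verification for each $n$ proceeds in order. First I compute $\eta$ from \eqref{eta-def}, using the exact values $\omega_3=4\pi/3$, $\omega_4=\pi^2/2$, $\omega_5=8\pi^2/15$, and $\omega_{n-1}$. Then I compute $W$ and $V$ from \eqref{k-minimum-lower} and check $W<V$. Here $\eta$ is of order $a/n$, so $W\approx 1-c\,\eta L/M$ and $V=(1-2/M)^n$; this forces $M$ to be moderately large and $L/M$ large enough to beat $1-2n/M$. Next I take $\rho$ to be the largest value for which the left side of \eqref{rho-condition} is at most $1$ and which is at most $\rho_0$. I verify this by evaluating $\Sigma(\rho)$ at a slightly rounded-down rational $\rho$. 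Because $\Sigma$ is increasing in $\rho$ (as the derivative computation in Step 2 of Theorem \ref{t:sharp-4} shows), a single evaluation suffices. I then compute $\tau$ from \eqref{tau-def} and $\lambda$ from \eqref{lambda-def}, and report $\lambda\rho/(1+\lambda)$ rounded down.

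The key tradeoff is that the hole gain $1-\Sigma(0)\approx \omega_n(4L)^{-n}(1-a-O(\sqrt n/h))$ decreases as $a\to1$ or as $L$ grows. Meanwhile $\eta\propto a$ and the geometric factor $4L/(3M)-1$ need $L/M$ large for $W<V$, and $\lambda$ needs $V-W$ not too small, since $\log\tau\approx \rho\cdot(\text{derivative})$ is tiny anyway, making $\lambda$ typically large. So the optimum likely has $\lambda/(1+\lambda)$ close to $1$. In that regime the bound is essentially $\rho$, which is essentially the largest $\rho$ allowed by \eqref{rho-condition}. I would therefore first choose the minimal $M$ (for fixed ratio) such that $W<V$ with $\lambda\gg1$, then optimize $L$ near $3M/2$ and $a$ by a small grid search.

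The main obstacle is making this rigorous and reproducible rather than the ideas themselves. The quantities involve $L^n$ for $L$ of order tens, huge powers, and differences $1-\Sigma(0)$ of size roughly $10^{-5}$ to $10^{-10}$. These must be computed with enough precision, using exact rational or interval arithmetic, to certify both \eqref{rho-condition} and the final decimals (e.g.\ $1.0118\times10^{-5}$ for $n=3$). I expect to present a table of the chosen parameters and the computed $\eta$, $W$, $V$, $\rho$, $\tau$, $\lambda$ for each $n$. The final statement that the upper dimension is less than $2.99999$ in $\RR^3$ then follows immediately, since $b_3>10^{-5}$.
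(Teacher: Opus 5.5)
Your route is the paper's route. The paper's proof is just Theorem~\ref{t:bb} applied with an explicit table, $(M,L,a,h)=(9,17,\tfrac{117}{200},10^5)$, $(10,17,\tfrac{13}{20},10^5)$, $(11,19,\tfrac{143}{200},10^6)$ for $n=3,4,5$, with $\rho$ just below saturation of \eqref{rho-condition}. This is followed by direct evaluation of $W$, $V$, $\rho_0$, $1-\Sigma(\rho)$ and $\lambda\rho/(1+\lambda)$.

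However, the one concrete prescription in your plan, $h=n^3$ ``or slightly larger'', points the wrong way and cannot reach the stated constants. The constraint $\rho\le\rho_0$ is nowhere near binding. Even with $h=10^5$, for $n=3$ one has $\rho_0\approx2.38\times10^{-5}$ against the needed $\rho\approx1.06\times10^{-5}$, and for $n=4,5$ the slack is a factor of more than $30$ and $1000$.

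What $h$ really controls is the fourth term of \eqref{rho-condition}, the tiny cubes meeting $\partial B_{P,x}$. It enters the hole gain through
\[
1-\Sigma(0)=\frac{\omega_n}{(4L)^n}\Big[\Big(1-\frac{2\sqrt n}{h}\Big)^n-a\Big],
\]
so the loss is about $2n^{3/2}/h$, not $O(\sqrt n/h)$. For $n=3$ and $h=27$, the first term in the bracket is about $0.662$. Meanwhile $W<V$ already forces $a\gtrsim0.557$ at $(M,L)=(9,17)$, so the bracket is at most about $0.11$ instead of the paper's $0.415$.

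Increasing $L$ to lower the required $a$ does not rescue this. Convexity of $\Sigma$ gives $\rho\le(1-\Sigma(0))/\Sigma'(0)$, and $\Sigma'(0)\approx(n/L)\log L$, so larger $L$ costs a factor of roughly $L^{-(n-1)}/\log L$. Scanning $L$ and $M$ shows that no admissible $\rho$ exceeds about $4\times10^{-6}$ when $h=27$, and none reaches $10^{-5}$ even when $h=100$. The same loss, a factor of roughly $2$ to $3$, occurs for $n=4,5$ with $h=n^3$. You must take $h$ huge and accept the harmless $1/\log h$ decay of $\rho_0$.

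Two smaller heuristics are also off, though a grid search would correct them. First, the fifth term $a\omega_n(4L)^{-(n-\rho)}$ counts against \eqref{rho-condition}. So you want $a$ as small as $W<V$ permits, with some margin for $\lambda$, rather than the fifth term large. Second, the optimum is not at $L\approx 3M/2$. There $4L/(3M)-1=1$, and $W<V$ demands a much larger $a$; for example, $n=3$, $M=10$, $L=15$ needs $a>0.71$. The paper instead uses $L/M\approx1.9,\,1.7,\,1.7$.

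Finally, for a purely numerical statement the certified table of values is the proof. Your proposal describes that computation but does not yet supply it.
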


\begin{proof}
Find a calculator and apply Theorem \ref{t:bb} with
\[
\begin{array}{c|ccccc}
n&M&L&a&h&\rho\\ \hline
3&9&17&117/200&10^5&1.05628\times10^{-5}\\
4&10&17&13/20&10^5&1.22747\times10^{-7}\\
5&11&19&143/200&10^6&8.0084\times10^{-10}.
\end{array}
\]
In each row $2L>3M$. Let $\Sigma(\rho)$ denote the left hand side of
\eqref{rho-condition}. By direct evaluation,
\[
\begin{array}{c|ccccc}
n&W&V&\rho_0&1-\Sigma(\rho)&\lambda\rho/(1+\lambda)\\ \hline
3&0.4581\ldots&0.4705\ldots&
2.3829\ldots\times10^{-5}&
1.7678\ldots\times10^{-11}&
1.01183\ldots\times10^{-5}\\
4&0.4073\ldots&0.4096&
4.2411\ldots\times10^{-6}&
6.6680\ldots\times10^{-13}&
1.206\ldots\times10^{-7}\\
5&0.3650\ldots&0.3666\ldots&
9.2649\ldots\times10^{-7}&
7.6165\ldots\times10^{-15}&
8.006\ldots\times10^{-10}.
\end{array}
\]
Thus, $W<V$, $\rho<\rho_0$, and \eqref{rho-condition} holds in each
case. The conclusion follows.
\end{proof}

\begin{proof}[Proof of Theorem \ref{t:big-dim}]
When $n=2$, both sides of \eqref{e:main-bound} equal $1$ by the
Jones--Wolff theorem. For $n=3,4,5$, Theorem \ref{t:sharp-small}
gives, respectively,
\[
\begin{aligned}
b_3&\geq1.0118\times10^{-5}
   >1.006\ldots\times10^{-5}
   =2^{-(6+6.12\sqrt3)},\\
b_4&\geq1.206\times10^{-7}
   >2.203\ldots\times10^{-10}
   =3^{-(8+6.12\sqrt{4})},\\
b_5&\geq8.006\times10^{-10}
   >5.500\ldots\times10^{-15}
   =4^{-(10+6.12\sqrt5)}.
\end{aligned}
\]
For $n\geq6$, \eqref{e:main-bound} is Theorem \ref{t:sharp-4}.
\end{proof}

\begin{proposition}[optimality of the leading exponent]
\label{p:sharpness-exponent}
Let $B_n$ denote the supremum of the lower bounds on $b_n$ furnished
by Theorem \ref{t:bb} over all admissible choices of its parameters.
Then
\[
B_n\leq
\left(\frac{1}{n-1}\right)^{2n-o(n)}
\qquad\text{as }n\to\infty.
\]
Thus, the coefficient $2$ in the leading term of the exponent in
Theorem \ref{t:big-dim} cannot be improved by a different choice of
parameters in Theorem \ref{t:bb}.
\end{proposition}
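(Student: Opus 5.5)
The plan is to use the trivial bound $\lambda\rho/(1+\lambda)<\rho$ and show that every admissible $\rho$ is forced to be at most $(n-1)^{-(2n-o(n))}$. There are two constraints to combine. The first is the hole condition \eqref{rho-condition}, which I will use to bound $\rho$ by roughly $\omega_n(4L)^{-n}$ times a polynomial factor. The second is the barrier condition $W<V$ in \eqref{k-minimum-lower}, which I will use to force $L\gtrsim n^{3/2}$. Substituting the second into the first, and using $\omega_n\approx(2\pi e/n)^{n/2}$ from Lemma \ref{ball-volume}, should give $\rho\leq C^n n^{O(1)} n^{-n/2}n^{-3n/2}=n^{-2n}C^n n^{O(1)}$. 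Since $C^n=n^{O(n/\log n)}$ and $n^{-2n}=(n-1)^{-2n+o(n)}$, this is the claimed bound.

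\emph{Step 1: the upper bound on $\rho$ from \eqref{rho-condition}.} Write $\Sigma(t)$ for the left-hand side of \eqref{rho-condition}. Each term has the form $c_i\,b_i^{\,t-n}$ with $c_i\geq0$ and $b_i>1$, so $\Sigma$ is convex and increasing. Convexity gives $\Sigma(\rho)\geq\Sigma(0)+\rho\,\Sigma'(0)$, so \eqref{rho-condition} forces
\[
\rho\leq\frac{1-\Sigma(0)}{\Sigma'(0)}.
\]
The numerator is computed exactly as in the proof of Theorem \ref{t:sharp-4}:
\[
1-\Sigma(0)=\frac{\omega_n}{(4L)^n}\left[\left(1-\frac{2\sqrt n}{h}\right)^n-a\right]\leq\frac{\omega_n}{(4L)^n}.
\]
For the denominator I only need a crude polynomial lower bound, since polynomial losses are absorbed into the $o(n)$. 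The term $\mathrm I$ alone gives $\Sigma'(0)\geq\log\frac{L}{L-1}\,(1-1/L)^n$. This is too weak only when $L\ll n$, and that case is excluded by Step 2. For $L\geq n$ it gives $\Sigma'(0)\gtrsim 1/L$. The conclusion of Step 1 is $\rho\lesssim L\,\omega_n(4L)^{-n}$.

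\emph{Step 2: the lower bound on $L$ from $W<V$.} First, $V=(1-2/M)^n\leq e^{-2n/M}$. Second, $\eta\leq 2/n$ from \eqref{eta-def}, and Lemma \ref{ball-volume} gives $n\omega_n/\omega_{n-1}\leq C_0\sqrt n$ for an absolute constant $C_0$. Hence
\[
W^{-1}\leq 1+C_1\frac{L}{M\sqrt n}.
\]
The condition $W<V$ therefore requires $1+C_1L/(M\sqrt n)>e^{2n/M}\geq 1+2n/M$, that is, $L>(2/C_1)\,n^{3/2}$, uniformly in $M$. This uniformity in $M$ is the crucial point. The same inequality also makes $L\geq n$ for large $n$, which justifies the lower bound on $\Sigma'(0)$ used in Step 1.

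\emph{Step 3: combining and the main obstacle.} Since $t\mapsto t^{1-n}$ is decreasing, inserting $L\geq c\,n^{3/2}$ into $\rho\lesssim L\,\omega_n(4L)^{-n}$ gives
\[
B_n\leq\sup\rho\lesssim \omega_n\,(4cn^{3/2})^{1-n}\leq n^{-2n}\,C^n n^{O(1)}.
\]
The step I expect to need the most care is keeping every constant uniform over \emph{all} admissible parameters $M$, $L$, $h$, $a$. In particular, the bound in Step 1 must not secretly depend on $h$; it does not, because $h$ only enters $1-\Sigma(0)$ with the favorable sign. The bound in Step 2 must hold for every $M\geq3$; it does, via $e^x-1\geq x$. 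The remaining bookkeeping is to check that $C^n n^{O(1)}=(n-1)^{o(n)}$, which holds because $\log C^n/\log(n-1)=O(n/\log n)$.
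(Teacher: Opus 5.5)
Your proof is correct and follows essentially the same route as the paper. Like the paper, you bound $\lambda\rho/(1+\lambda)$ by $\rho$, extract $L\gtrsim n^{3/2}$ from $W<V$ uniformly in $M$, and combine the exact identity $1-\Sigma(0)=\frac{\omega_n}{(4L)^n}\bigl[(1-2\sqrt n/h)^n-a\bigr]$ with a linear lower bound on $\Sigma(\rho)-\Sigma(0)$. The only difference is that you linearize via convexity using the big-cube term $\mathrm{I}$ (slope $\approx 1/L$), whereas the paper uses the medium-cube term $c_2L^\rho$ (slope $\approx (n/L)\log L$); your bound on $\rho$ is therefore weaker by a polynomial factor $n\log L$, which is harmless at the $o(n)$ level.
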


\begin{proof}
Since $\lambda\rho/(1+\lambda)<\rho$, it suffices to bound every
admissible $\rho$ from above. We first show that $W<V$ forces
$L\gtrsim n^{3/2}$. Put $x=4L/(3M)-1\geq1$. Since $\eta<2/n$,
the inequality $W<V$ and Bernoulli's inequality give
\[
1+2\frac{\omega_n}{\omega_{n-1}}x
>
1+\frac{n\omega_n}{\omega_{n-1}}x\eta
>
\left(1-\frac2M\right)^{-n}
\geq1+\frac{2n}{M}.
\]
Thus, $Mx>n\omega_{n-1}/\omega_n$, and hence
$L=\frac34M(1+x)>\frac34n\omega_{n-1}/\omega_n\gtrsim n^{3/2}$,
where the last estimate follows from Lemma \ref{ball-volume}.

Let $\Sigma(\rho)$ denote the left hand side of
\eqref{rho-condition}. At $\rho=0$,
\[
1-\Sigma(0)
=
\frac{\omega_n}{(4L)^n}
\left[\left(1-\frac{2\sqrt n}{h}\right)^n-a\right]
<
\frac{\omega_n}{(4L)^n}.
\]
The second term of $\Sigma(\rho)$ is $c_2L^\rho$, where
\[
c_2=1-\left(1-\frac1L\right)^n-\frac{2^{n-1}}{L^n}
=\frac nL(1+o(1)),
\]
uniformly over admissible parameters, since $L\gtrsim n^{3/2}$.
All five terms of $\Sigma$ are nondecreasing in $\rho$, and
$L^\rho-1\geq\rho\log L$. Therefore, \eqref{rho-condition} implies
\[
\rho\,\frac nL(1+o(1))\log L
\leq\Sigma(\rho)-\Sigma(0)
\leq1-\Sigma(0)
<
\frac{\omega_n}{(4L)^n}.
\]
Consequently,
\[
\rho
\leq
\frac{L}{n\log L}(1+o(1))
\frac{\omega_n}{(4L)^n}.
\]
Taking $n$th roots and using
$\omega_n^{1/n}=n^{-1/2+o(1)}$ and $L\gtrsim n^{3/2}$ gives
\[
\rho^{1/n}
\leq
\frac{\omega_n^{1/n}}4\,
L^{-1+1/n}(n\log L)^{-1/n}(1+o(1))
\leq n^{-2+o(1)}.
\]
Thus, every lower bound furnished by Theorem \ref{t:bb} is at most
$n^{-2n+o(n)}=(n-1)^{-2n+o(n)}$, which proves the proposition.
\end{proof}

\subsection*{Acknowledgement of AI use} The mathematics in this paper is the product of more than two years of research, and the principal ideas predate the authors' use of AI tools. Claude Opus 5, Claude Fable 5.1, and GPT 5.6 Sol were used as aids for symbolic and numerical computation, parameter search, figure generation, and manuscript preparation. All statements, proofs, and computations are the authors' own and have been verified by the authors.

\raggedbottom
\newcommand{\etalchar}[1]{$^{#1}$}
\providecommand{\bysame}{\leavevmode\hbox to3em{\hrulefill}\thinspace}
\providecommand{\MR}{\relax\ifhmode\unskip\space\fi MR }
\providecommand{\MRhref}[2]{%
  \href{http://www.ams.org/mathscinet-getitem?mr=#1}{#2}
}
\providecommand{\href}[2]{#2}

\end{document}